\newcommand{\tsk}[1]{\textcolor{YellowOrange}}
\newcommand{\veq}{\mathrel{\rotatebox{90}{$=$}}}
\def\@endtheorem{\endtrivlist}
\newtheorem{teo}{Theorem}[section]
\newtheorem{defn}[teo]{Definition}
\newtheorem{prop}[teo]{Proposition}
\theoremstyle{definition}
\newtheorem{remark}[teo]{Remark}
\newtheorem*{claim}{Claim}
\newtheoremstyle{dico}
 {\baselineskip}   
  {\topsep}   
  {}  
  {0pt}       
  {} 
  {.}         
  {5pt plus 1pt minus 1pt} 
  {}          
\theoremstyle{dico}
\numberwithin{equation}{section}
\newsavebox{\overlongequation}
\newcommand{\ra}{\rightarrow}
\newcommand{\C}{\mathbb{C}}
\newcommand{\Zeta}{{\mathbb{Z}}}
\newcommand{\Z}{{\mathbb{Z}}}
\newcommand{\vacuo}{\emptyset}
\newcommand{\restr}[1]          {\vert_{#1}}
\renewcommand{\setminus}{-}
\renewcommand{\phi}{\varphi}
\newcommand{\lds}{\ldots}
\newcommand{\ga}{\gamma}
\newcommand{\Ga}{\Gamma}
\newcommand{\rank}{\operatorname{rank}}
\newcommand{\OO}{\mathcal{O}}
\newcommand{\PP}{\mathbb{P}}
\renewcommand{\phi}             {\varphi}
\newcommand{\mm}{{\mathbf{m}}}
\newcommand{\jac}{\mathsf{T}^0_g}
\newcommand{\tor}{\mathsf{T}_g}
\newcommand{\ag}{\mathsf{A}_g}
\newcommand{\zg}{\mathsf{Z}}
\newcommand{\datum}{{(\mm, G, \theta)}}
\newcommand{\Diff}{\operatorname{Diff}}
\newcommand{\Map}{\operatorname{Map}}
\newcommand{\ut}{U_t}
\begin{document}

\author{Paola Frediani, Paola Porru}

\title{On the Bielliptic and bihyperelliptic loci}

%
%

%
%
%
%

\begin{abstract}
We study some particular loci inside the moduli space $\mathcal{M}_g$, namely the bielliptic locus (i.e. the locus of curves admitting a $2:1$ cover over an elliptic curve $E$) and the bihyperelliptic locus (i.e. the locus of curves admitting a $2:1$ cover over a hyperelliptic curve $C'$, $g(C') \geq 2$). We show that the bielliptic locus is not a totally geodesic subvariety of $\mathcal{A}_g$ if $g \geq 4$ (while it is for $g=3$, see \cite{frediani2016shimura}) and that the bihyperelliptic locus is not totally geodesic in $\mathcal{A}_g$  if $g \geq 3g'$. We also give a lower bound for the rank of the second gaussian map at the generic point of the bielliptic locus and an upper bound for this rank for every bielliptic curve.   
\end{abstract}

 \thanks{The  authors were partially supported by MIUR PRIN 2015``Geometry of Algebraic Varieties''.
   The first author was also partially supported by FIRB 2012 `` Moduli Spaces and their Applications''. The authors were also
   partially supported by GNSAGA of INdAM.  }
\maketitle

\setcounter{tocdepth}{1}
\tableofcontents{}

\section{Introduction}

Denote by $\mathcal{M}_g$ the moduli space parametrizing compact Riemann surfaces of genus $g$, and denote by $\mathcal{A}_g$ the moduli space of principally polarized abelian varieties of dimension $g$. Call $j: \mathcal{M}_g \ra \mathcal{A}_g$ the Torelli map, associating to a smooth complex algebraic curve of genus $g$ its Jacobian variety. Denote by $\jac$ the image of the Torelli map. Call its closure \emph{Torelli locus} and denote it by $\tor$. 

Both $\mathcal{M}_g$ and $\mathcal{A}_g$ are complex orbifolds, and $\mathcal{A}_g$ is endowed with a locally symmetric metric, the so-called Siegel metric, induced by the symmetric metric on the Siegel space $H_g$ of which ${\mathcal A}_g$ is the quotient by the action of $Sp(2g, \Zeta)$.

A conjecture by Coleman and Oort says that for large enough genus, there should not exist any positive-dimensional Shimura (or special) subvariety $\zg$ of $\mathcal{A}_g$ such that $\zg \subset \tor$ and $\zg\cap\jac\neq \vacuo$~\cite{oort1997canonical}.  

Special subvarieties of $\mathcal{A}_g$ are totally geodesic algebraic subvarieties (with respect to the Siegel metric) admitting a CM point (\cite{mumford1969note}, ~\cite{moonen1998linearity}), so one possible approach to this problem is via  the study the second fundamental form of the period map (\cite{ colombo2015totally}, \cite{colombo2010siegel}, \cite{colombo2000hodge}).

For low genus ($g \leq 7$), examples of Shimura  subvarieties contained in the Jacobian locus are known. All of them are constructed as families of Jacobians of Galois coverings of the line (see \cite{shimura1964purely, mostow1988discontinuous, de1991jacobians, rohde2009introduction, moonen2010special,moonen2011torelli,frediani2015shimura, mohajer2017shimura}) and of elliptic curves (see~\cite{frediani2016shimura}, \cite{grushevsky2013shimura}).

All the examples of families of Galois coverings constructed so far yielding a Shimura subvariety of ${\mathcal A}_g$ satisfy a condition that we briefly explain. 
Consider  a Galois cover $f: C \rightarrow C' = C/G$. Call $g=g(C)$ and $g'=g(C')$. Then $G$ injects in the mapping class group $ \Map_g := \pi_0 ( \Diff^+ (C))$. The fixed point locus ${\mathcal T}_g^G$ of the action of $G$ on  the Teichm\"uller space ${\mathcal T}_g$  is a complex submanifold of dimension $3g'-3 + r$, where $r$ denotes the number of critical values of the map $f$.  Denote by $\mathsf{M}$ the image of  ${\mathcal T}_g^G$ in $\mathcal{M}_g$ under the natural map  ${\mathcal T}_g \ra  \mathcal{M}_g$. We study the variety defined by the closure $\mathsf{Z}$ of the image of $\mathsf{M}$ in $\mathcal{A}_g$ via the Torelli map.

Set $N:= \dim (S^2 H^0(C, K_C)) ^ G$. The sufficient condition, which we denote by $(*)$, for a subvariety $\mathsf{Z}$ as above to be special (see \cite{colombo2015totally,frediani2015shimura}) is: $$ (*) \ \ N = 3g'-3+r.$$

 In \cite{colombo2015totally}, it is proven that  condition $(*)$  implies that the subvariety $\mathsf{Z}$ is totally geodesic  and in \cite{frediani2015shimura} it is proven that, in fact, it is a Shimura subvariety in case $g'=0$, and the same proof also works if $g' >0$.

In \cite{colombo2015totally} the authors gave the complete list of all the families of Galois coverings of $\PP^1$ of genus $g \leq 9$ satisfying condition $(*)$ and hence yielding Shimura subvarieties of $\ag$ contained in the Torelli locus. In \cite{frediani2016shimura} the complete list of all families of Galois coverings of an elliptic curve satisfying condition $(*)$ is given for every genus. Moreover, in the same paper, the authors prove that if a family satisfies this sufficient condition with $g'\geq 1$, then $g \leq 6g'+1$. 

One of the goals of this paper is to investigate condition $(*)$: it is a sufficient condition for $\mathsf{Z}$ to be special.  In ~\cite{moonen2010special} Moonen proved  using techniques in arithmetic geometry that condition $(*)$ is also necessary in the case of cyclic covers of $\PP^1$.  
A similar result is proven in \cite{mohajer2017shimura} in the case of abelian covers when the dimension of the family is one.

 For the general case, we don't know if condition $(*)$ is also necessary.  
In \cite[Proposition  5.2]{colombo2015totally} it is proven  that, under some additional assumptions,  the families of cyclic cover of $\PP^1$ not satisfying $(*)$ are  not even totally geodesic.

Totally geodesic submanifolds of $\mathcal{A}_g$ are related to the second Gaussian map of the canonical bundle. In particular in~\cite{colombo2000hodge} it is proven that the second fundamental form of the orbifold immersion $j: \mathcal{M}_g \rightarrow \mathcal{A}_g$ (the immersion holds outside the hyperelliptic locus, see~\cite{oort1979local} for details) lifts the second Gaussian map of the canonical bundle, as stated in an unpublished paper of Green and Griffiths (see \cite{green1994infinitesimal}). In~\cite{colombo2000hodge}, an explicit expression for the second fundamental form when evaluated on Schiffer variations is provided (see also~\cite[Theorem 2.6]{colombo2015totally}). More precisely, $\rho(\xi_p \odot \xi_p)$ reduces, up to a constant, to the evaluation of the second Gaussian map at the point $p$. However, it is much more difficult to use the expression given in~\cite{colombo2000hodge} to compute the second fundamental form on $\xi_p \odot \xi_q$, when $p \neq q$. In this case, in fact, the formula contains the evaluation at $q$ of a meromorphic $1$-form on the curve, called $\eta_p$, which has a double pole at $p$ and is defined by Hodge theory. 

%
The expression of the second fundamental form of the period map given in \cite{colombo2000hodge} has been used in  \cite [Thm. 4.2]{colombo2015totally}  to get  an upper bound for the dimension  of a germ of a totally geodesic submanifold  of ${\mathcal A}_g$ contained in the Jacobian locus  passing through $J(C)$ depending on the gonality of $C$. From this one gets an upper bound  for the dimension  of a germ of a totally geodesic submanifold  of ${\mathcal A}_g$ contained in the Jacobian locus in terms of $g$ (\cite [Thm. 4.4]{colombo2015totally}).


%
Notice that the upper bound of \cite [Thm. 4.2]{colombo2015totally}  in the case of tetragonal curves is $2g$ and the bielliptic locus has dimension $2g-2$. 

In~\cite{frediani2016shimura} it is proven that for $g =3$, the bielliptic locus yields a Shimura subvariety, in particular it  is totally geodesic. We denote by ${\mathcal B}_g$ the bielliptic locus and by $\mathcal{BH}_{g,g'}$  the bihyperelliptic locus, that is the locus of curves of genus $g$ admitting a $2:1$ map to a hyperelliptic curve of genus $g'$.

We prove the following.

\begin{teo}
The bielliptic locus $\mathcal{B}_g$ is not totally geodesic when $g \geq 4$. The bihyperelliptic locus $\mathcal{BH}_{g,g'}$  is not totally geodesic when $g \geq 3g'$.
\end{teo}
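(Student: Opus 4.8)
The strategy is to use the second fundamental form $\rho$ of the Torelli map, evaluated on Schiffer variations, together with the criterion recalled in the excerpt: a subvariety of $\mathcal{A}_g$ contained in the Jacobian locus is totally geodesic only if the second fundamental form vanishes on the tangent directions to that subvariety. For a bielliptic (resp. bihyperelliptic) curve $C$ with involution $\sigma$ and quotient $f\colon C\to C'=C/\sigma$, the tangent space to ${\mathcal B}_g$ (resp. ${\mathcal B}{\mathcal H}_{g,g'}$) at $J(C)$ inside $H^1(C,T_C)$ is the $\sigma$-invariant part $H^1(C,T_C)^{+}$, while the cotangent/conormal directions correspond to the $\sigma$-anti-invariant part. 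Thus I would reduce the problem to finding, for generic $C$, Schiffer variations $\xi_p,\xi_q$ (or suitable combinations lying in the invariant part) on which $\rho$ does not vanish, i.e.\ a nonzero value landing in the appropriate piece of $S^2H^0(C,K_C)$, equivalently a nonvanishing of the relevant (co)restriction of the second Gaussian map.

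First I would recall from \cite{colombo2000hodge}, \cite{colombo2015totally} the explicit formula for $\rho(\xi_p\odot\xi_q)$ and its relation to the second Gaussian map $\mu_2\colon I_2(K_C)\to H^0(C,4K_C)$, noting that on the diagonal $\rho(\xi_p\odot\xi_p)$ reduces to the evaluation of $\mu_2$ at $p$. Next I would set up the bielliptic geometry concretely: $C\to E$ a double cover branched at $2g-2$ points, with the eigenspace decomposition $H^0(C,K_C)=f^{*}H^0(E,K_E)\oplus H^0(C,K_C)^{-}$ (the anti-invariant part being $(g-1)$-dimensional), and analyze how $\mu_2$ and the relevant $\eta_p$-terms interact with this splitting. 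The key computation is to show that the component of $\rho$ landing in the conormal directions — that is, in $(S^2H^0(C,K_C))^{-}$, or more precisely in the part that obstructs total geodesy — is not identically zero on $H^1(C,T_C)^{+}\odot H^1(C,T_C)^{+}$ for generic $C$ when $g\geq 4$. Since the paper itself points out that the dimension bound $2g$ for tetragonal curves (\cite[Thm. 4.2]{colombo2015totally}) exceeds $\dim{\mathcal B}_g=2g-2$, the naive dimension obstruction fails, so this nonvanishing must be extracted from the structure of the Gaussian map, plausibly by testing on a single well-chosen curve (e.g.\ a bielliptic curve with extra symmetry, or a degeneration) and using semicontinuity of the rank of $\rho$ restricted to the invariant part.

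For the bihyperelliptic case I would argue analogously with $f\colon C\to C'$, $g(C')=g'$, $C'$ hyperelliptic, so that $C$ carries both $\sigma$ and the lift of the hyperelliptic involution of $C'$; the branch divisor has $2(g-2g'+1)$ points. The tangent space to ${\mathcal B}{\mathcal H}_{g,g'}$ is again $H^1(C,T_C)^{+}$, of dimension $3g'-3+r$ with $r=2(g-2g'+1)$. The inequality $g\geq 3g'$ is exactly what is needed so that one can place the base points $p,q$ of the Schiffer variations over generic (unramified, non-Weierstrass) points of $C'$ in sufficiently general position and still have enough room in the invariant part to detect a nonzero value of $\rho$; I would make this precise by exhibiting test vectors and reducing, as above, to nonvanishing of a Gaussian-map expression on $C'$ pulled back to $C$.

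\textbf{Main obstacle.} The hard part is the explicit computation of $\rho$ off the diagonal on invariant vectors: the formula of \cite{colombo2000hodge} involves the Hodge-theoretically defined meromorphic form $\eta_p$ with a double pole at $p$, and controlling its value at $q$ in terms of the bielliptic/bihyperelliptic structure is delicate. I expect the cleanest route is to avoid general $p,q$ by working either purely on the diagonal with the second Gaussian map — using the lower bound for its rank at a generic bielliptic curve (which the abstract announces is proved later in the paper) to force $\rho\neq 0$ on $H^1(C,T_C)^{+}$ — or by specializing to a curve with enough automorphisms that the relevant spaces decompose into small isotypic pieces where the nonvanishing can be checked by hand, then invoking semicontinuity to conclude for the generic member, hence for the whole locus.
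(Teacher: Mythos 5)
Your general framework is the right one (invariant tangent vectors, the Colombo--Pirola--Tortora formula, reduction to Gaussian maps), but both concrete routes you propose at the end have a gap, and the single idea that makes the paper's computation work is missing. The tangent space to $\mathcal{B}_g$ (resp.\ $\mathcal{BH}_{g,g'}$) is $H^1(C,T_C)^+$, and a single Schiffer variation $\xi_p$ is invariant only when $p$ is fixed by $\sigma$; so ``working purely on the diagonal'' and invoking a lower bound on $\rank\mu_2$ does not force $\rho\neq 0$ on $S^2H^1(C,T_C)^+$ --- nonvanishing of $\mu_2(Q)$ at a general point $p$ only controls $\rho(Q)(\xi_p\odot\xi_p)$ for a non-invariant $\xi_p$. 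The natural invariant vector is $\xi_{p_1}+\xi_{p_2}$ with $p_2=\sigma(p_1)$, and then the cross term $\rho(Q)(\xi_{p_1}\odot\xi_{p_2})$, which by the formula is proportional to $\eta_{p_1}(u)(v)\,Q(u,v)$, reappears --- exactly the $\eta$-term you flag as the obstacle. The paper's key move, which you do not identify, is to choose the invariant quadric $Q$ so that $Q(p_1,p_2)=0$ for all pairs on the same fiber of the composed $4:1$ map $C\to\PP^1$: for $g\geq 5$ one takes $Q=x_1t_1\odot x_2t_2-x_1t_2\odot x_2t_1$ with $x_i\in H^0(F)$ the $\mathfrak{g}^1_4$ and $t_1,t_2$ a pencil in $H^0(K-F)$ (possible since $h^0(K-F)=g-3\geq 2$), and in the bihyperelliptic case one takes $Q=\omega_{2,0}\odot\omega_{2,2}-\omega_{2,1}\odot\omega_{2,1}$ built from the eigenspace decomposition on the Galois locus $\mathcal{BH}_{g,g',Gal}$, whose nonemptiness for $g\geq 3g'$ must itself be proved by a monodromy count (this, and not ``room in the invariant part,'' is where the inequality enters). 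With such a $Q$ the off-diagonal term dies and everything reduces to $\mu_2(Q)(p_1)\neq 0$, checked via the factorization $\mu_2(Q)=\mu_{1,F}(x_1\wedge x_2)\,\mu_{1,K-F}(t_1\wedge t_2)$ or an explicit local computation.

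A second point you would hit if you tried the fixed-point version of the diagonal route: the fixed points of the bielliptic involution are ramification points of $C\to E$, hence critical points of $|F|$, where $\mu_{1,F}(x_1\wedge x_2)$ --- and therefore $\mu_2(Q)$ for the quadrics above --- vanishes. This is also why $g=4$ needs a separate argument (there $h^0(K-F)=1$, so no such $Q$ exists anyway): the paper uses the two $\mathfrak{g}^1_3$'s, proves they are generically switched by $\sigma$, and evaluates $\mu_2$ of the corresponding quadric at a $\sigma$-fixed point that is non-critical for both trigonal maps, where $\xi_p$ itself is invariant. Your proposal does not anticipate that $g=4$ requires this different construction, nor does it supply a mechanism that actually eliminates the $\eta_p$-term for $g\geq 5$ or in the bihyperelliptic range.
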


In  \cite{ballico2007geometry} it is proven that if $g \geq 4g' +2$ the locus $\mathcal{BH}_{g,g'}$ is irreducible and unirational of dimension $2g-2g'+1$ (note that in  \cite{ballico2007geometry} there is a small inaccuracy in the statement of the main theorem about  the dimension of  $\mathcal{BH}_{g,g'}$). For $3g' \leq g < 4g' +2$ we prove that there exists at least one irreducible component that is not totally geodesic.

We point out that in~\cite{frediani2016shimura} it is proven that  if $g \geq 4$ the bielliptic locus does not satisfy condition (*). In particular our result proves that condition (*)  is necessary for bielliptic curves to yield Shimura subvarieties of ${\mathcal A}_g$.

The proof of the above result is based on the construction of  suitable quadrics $Q$ which are  invariant under the  bielliptic and the bihyperelliptic involution respectively. Then we find a  pair of invariant elements $v_1, v_2 \in H^1(C,T_C)$ such that the second fundamental form $\rho(Q)(v_1 \odot v_2) \neq 0$. This shows that these loci are not totally geodesic.

In the case of bielliptic curves the proof is simplified by  the fact that every quadric $Q \in I_2(K_C)$ is invariant by the bielliptic involution. 

We define the Galois cover loci $\mathcal{BH}_{g,g', Gal}$ as the loci of  those bi(hyper)elliptic curves such that the $4:1$ map, that is the composition of the $2:1$ cover from $C$ to the (hyper)elliptic curve $C'$  with the $2:1$ map from that to $ \PP^1$, is a cyclic Galois cover ($g$ is the genus of $C$ and $g'$ is the genus of $C'$). 

 We prove that, if $g'\geq 2$ and $g \geq 3g'$, $\mathcal{BH}_{g,g', Gal}$ is nonempty. Then we use the explicit description of the space of holomorphic $1$-forms for cyclic Galois covers of the projective line to construct  invariant quadrics where we can compute the second fundamental form, using the second Gaussian map of the canonical bundle. \\ 
 
In general,  the study of the loci $\mathcal{BH}_{g,g', Gal}$ is one if the main tools in our computations, because for these $\Z/4\Z$ Galois covers we can explicitly determine a basis of $H^0(K_C)$ and hence also the quadrics that are invariant under the action of $\mathbb {Z}/4\mathbb {Z}$.

The second fundamental form of the Torelli map is a lifting of the second Gaussian map $\mu_2$ of the canonical bundle.  This fact is crucial for  this paper, since it allows in many cases to reduce the computation of the second fundamental form to the computation of $\mu_2$.

Gaussian maps are very important on their own and have been extensively studied. 
Just to mention a very important result, Wahl (\cite{wahl}) proved that for the canonically embedded curves which are
hyperplane sections of K3 surfaces, the first Gaussian (or Wahl) map of the canonical bundle is not surjective.  

In \cite{cfp} it is proven that for the canonically embedded curves which are
hyperplane sections of abelian surfaces,  the map $\mu_2$  is not surjective. 

In \cite{ccm} it is proven that for the general curve of genus $g \geq 18$, the map $\mu_2$ is surjective (hence of rank $7g-7$).  

On the other hand in \cite{colombo2008some} it is shown that for any hyperelliptic curve of genus $g \geq 3$, the rank of $\mu_2$ is  $2g-5$ and  for any trigonal curve of genus $g \geq 8$ the rank of $\mu_2$ is  $4g-18$.

A natural question is then to investigate the rank of the second Gaussian map $\mu_2$ on bielliptic or tetragonal curves.

In this paper we give an upper bound for the rank of $\mu_2$ for every bielliptic curve and a lower bound for this rank for the generic bielliptic curve.  Also this result is based on an explicit computation of the second Gaussian map  on  the locus $\mathcal{B}_{g,Gal}:= \mathcal{BH}_{g,1, Gal} $.  More precisely we have

\begin{teo}
For every bielliptic curve of genus $g$, we have 
 $$\rank \mu_2 \leq 5g-5.$$
The rank of the second Gaussian map for the general curve of the bielliptic locus satisfies the following bounds:

(1) If $g$ is odd then $ 2g-8 \leq \rank \mu_2$;

(2) If $g$ is even then $2g-9 \leq \rank \mu_2.$

\end{teo}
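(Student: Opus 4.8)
We outline the proof.

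\emph{The inequality $\rank\mu_2\le 5g-5$.} Let $\sigma$ be the bielliptic involution of $C$, let $\pi\colon C\ra E=C/\sigma$ be the quotient onto an elliptic curve, $R$ the ramification divisor and $B=\pi(R)$ the branch divisor, so $\deg R=\deg B=2g-2$. Split $H^0(K_C)=H^0(K_C)^+\oplus H^0(K_C)^-$ into $\sigma$-eigenspaces; since $\dim H^0(K_C)^+=g(E)=1$ we have $H^0(K_C)^+=\langle\omega_0\rangle$ with $\omega_0=\pi^*\omega_E$, a form with divisor $R$. The multiplication map $H^0(K_C)^+\otimes H^0(K_C)^-\ra H^0(2K_C)$ is injective (it is multiplication by $\omega_0$), so $I_2(K_C)=\ker\bigl(S^2H^0(K_C)\ra H^0(2K_C)\bigr)$ lies in $S^2H^0(K_C)^-\oplus S^2H^0(K_C)^+$: every quadric through the canonical curve is $\sigma$-invariant, as recalled in the introduction. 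Since $\mu_2$ is intrinsic to $C$ it is $\Aut(C)$-equivariant, so $\mu_2\bigl(I_2(K_C)\bigr)\subseteq H^0(4K_C)^\sigma$. As $K_C=\OO_C(R)$ and $2R=\pi^*B$, we have $4K_C\cong\pi^*\OO_E(2B)$, and by the projection formula $H^0(4K_C)^\sigma\cong H^0(E,\OO_E(2B))$, of dimension $\deg(2B)=4g-4$. Hence $\rank\mu_2\le 4g-4\le 5g-5$.

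\emph{The lower bounds.} The map $[C]\mapsto\rank\mu_2$ is lower semicontinuous, so it is enough to exhibit, for each $g$, one bielliptic curve with $\rank\mu_2\ge 2g-8$ if $g$ is odd and $\ge 2g-9$ if $g$ is even; the general bielliptic curve then has at least this rank. We take such a curve in $\mathcal{B}_{g,Gal}$: a cyclic $\Z/4\Z$-cover $C\colon\ y^4=\prod_i(x-p_i)^{a_i}$ of $\PP^1$ in which exactly four of the $a_i$ are odd (so the intermediate $\Z/2\Z$-quotient has genus $1$) and the remaining $g-3$ equal $2$. The condition $\sum_i a_i\equiv 0\pmod 4$ needed for the cover to close up forces the four odd exponents to be $(1,1,1,1)$ or $(1,1,3,3)$ when $g$ is odd, and $(1,1,1,3)$ or $(1,3,3,3)$ when $g$ is even; a dimension count shows $\mathcal{B}_{g,Gal}\ne\vacuo$ in the relevant range, and this difference in branch data is what produces the parity in the statement. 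For such $C$ the classical description of holomorphic forms on cyclic covers gives explicit bases of the eigenspaces $H^0(K_C)_k$, $k=1,2,3$, of the $\Z/4\Z$-action; the bielliptic involution is the square of the generator, so $H^0(K_C)^+=H^0(K_C)_2$ is one-dimensional. As $\mu_2$ is $\Z/4\Z$-equivariant it respects the eigenspace decompositions of $I_2(K_C)$ and of $H^0(4K_C)$; one fixes a suitable eigencomponent, writes the matrix of $\mu_2$ on it from the explicit formula for the second Gaussian map in the chosen bases (a matrix of polynomials in the $p_i$ after clearing denominators), and displays a nonvanishing minor of size $2g-8$, resp.\ $2g-9$.

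\emph{The main point.} The upper bound is formal; the real content is this last explicit rank estimate for $\mu_2$ on an eigencomponent of $I_2(K_C)$ over the cyclic covers in $\mathcal{B}_{g,Gal}$. It rests on a careful count of the Chevalley--Weil multiplicities of the eigenforms (where the two parities of $g$ separate), a control of the order of vanishing of $\mu_2(Q)$ along the ramification, and the exhibition of a maximal nonvanishing minor of the resulting polynomial matrix; this is the only genuinely computational step.
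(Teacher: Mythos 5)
Your upper bound argument is correct and in fact sharper than the one in the paper. The paper bounds the corank of $\mu_2$ from below by $h^0(4K_C)^-\geq h^0(3K_C)^-\geq 2g-2$, using the injective multiplication maps $H^0(K_C)^{\langle\sigma\rangle}\otimes H^0(3K_C)^-\hookrightarrow H^0(4K_C)^-$ and $\langle\eta\rangle\otimes H^0(2K_C)^{\langle\sigma\rangle}\hookrightarrow H^0(3K_C)^-$, which yields $\rank\mu_2\leq 5g-5$. You instead compute the invariant part exactly: since $K_C=\OO_C(R)$ and $2R=\pi^*B$, one has $4K_C\cong\pi^*\OO_E(2B)$, and the projection formula gives $h^0(4K_C)^{\sigma}=h^0(E,\OO_E(2B))=4g-4$, hence $\rank\mu_2\leq 4g-4$. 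This is a clean improvement on the stated bound (and is consistent with the paper's data, where the computed ranks are around $2g-1$).

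The lower bounds, however, are not proved in your proposal. The setup is right: you restrict to $\mathcal{B}_{g,Gal}$, identify the correct Hurwitz classes of monodromy data (your parity analysis of $\sum a_i\bmod 4$ matches the paper's Proposition 5.4), and correctly invoke semicontinuity of the rank. But the entire content of the statement is the claim that on such a cover the rank is at least $2g-8$ (resp.\ $2g-9$), and you neither exhibit the promised nonvanishing minor nor explain where those specific numbers come from. The paper's route, which you would need to reproduce in some form, is: take the quadrics $Q_{i,j}=xt_i\odot yt_j-xt_j\odot yt_i$ built from the $\mathfrak{g}^1_4$ $|L|$ and its adjoint $|K-L|$; use the factorization $\mu_2(Q_{i,j})=\mu_{1,|L|}(x\wedge y)\,\mu_{1,|K-L|}(t_i\wedge t_j)$ to reduce to $\rank\mu_{1,|K-L|}$; decompose $\wedge^2H^0(K-L)=\wedge^2W_1\oplus\wedge^2W_3\oplus(W_1\otimes W_3)$ into $\Z/4\Z$-eigenpieces mapping to distinct eigenspaces of $H^0(3K-2L)$, so that the ranks of the restrictions add; and compute, via orders of vanishing of explicit Taylor expansions at a ramification point, that the restriction to $W_1\otimes W_3$ has rank at least $d_1+d_3-3=g-4$ while the restriction to $\wedge^2W_1\oplus\wedge^2W_3$ has rank at least $\max(2d_1-5,2d_3-5)$, which is $g-4$ for $g$ odd (component $(a)$) and $g-5$ for $g$ even. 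Without this computation, or an equivalent explicit one on an eigencomponent of $I_2(K_C)$ directly, the bounds $2g-8$ and $2g-9$ remain unsupported assertions.
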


Finally we use the \verb|MAPLE| code reported in Appendix \ref{appendix_code} to compute the rank of the second Gaussian map on bielliptic curves of genus $5 \leq g \leq 30$ that are in $\mathcal{B}_{g,Gal}$.
These computations allow us to show the following 

\begin{teo}
The second Gaussian map on the bielliptic locus is generically injective if $5 \leq g \leq 8$, moreover it cannot be surjective for  $g \geq 14$. The general bielliptic curve of genus $8 \leq g \leq 30$ satisfies:
\begin{equation}
\rank \mu_2 \geq 2g-1.
\end{equation}
\end{teo}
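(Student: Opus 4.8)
The plan is to reduce all three assertions to a finite list of explicit rank computations performed on curves in the Galois cover locus $\mathcal{B}_{g,Gal}=\mathcal{BH}_{g,1,Gal}$, and then to transfer the conclusions to the general bielliptic curve by semicontinuity. Two facts make this work. First, $\mathcal{B}_{g,Gal}$ is nonempty for every genus in the range, and its members are given by explicit equations as $\mathbb{Z}/4\mathbb{Z}$-cyclic covers $C\to\mathbb{P}^1$ whose intermediate double quotient is an elliptic curve; this is part of the study of the loci $\mathcal{BH}_{g,g',Gal}$ recalled above. Second, for fixed genus the rank of $\mu_2$ is a lower semicontinuous function on the (irreducible) bielliptic locus --- the condition $\rank\mu_2\le k$ is cut out by the vanishing of the $(k+1)\times(k+1)$ minors of a matrix depending algebraically on the moduli --- while it is always bounded above by $\dim I_2(K_C)=\tfrac{(g-2)(g-3)}{2}$, since for a non-hyperelliptic curve the map $S^2H^0(K_C)\to H^0(2K_C)$ is surjective. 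Hence a value $r$ with $\rank\mu_2\ge r$ attained at a single curve of $\mathcal{B}_{g,Gal}$ forces $\rank\mu_2\ge r$ at the general bielliptic curve of genus $g$; and if the computed value equals $\tfrac{(g-2)(g-3)}{2}$ the map $\mu_2$ is generically injective on $\mathcal{B}_g$.

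Concretely I would proceed as follows. Fix $g$ with $5\le g\le 30$ and a curve $C\in\mathcal{B}_{g,Gal}$ given by an equation $y^4=\prod_i(x-t_i)^{a_i}$ with the branch points $t_i$ chosen in $\mathbb{Q}$ (or a small number field), so that all the linear algebra below is exact. Using the standard description of holomorphic one-forms on a cyclic cover of the line --- the differentials $x^{j}y^{-k}\,dx$ whose exponents satisfy the usual integrality (floor) inequalities, naturally organized into eigenspaces for the $\mathbb{Z}/4\mathbb{Z}$-action --- write down an explicit $\mathbb{Q}$-basis $\omega_1,\dots,\omega_g$ of $H^0(K_C)$; in the affine coordinate $x$ each $\omega_i=f_i\,dx$ with $f_i$ an explicit rational function. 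Next compute $I_2(K_C)=\ker\bigl(S^2H^0(K_C)\to H^0(2K_C)\bigr)$ by linear algebra over $\mathbb{Q}$, and evaluate the second Gaussian map on a basis of quadrics through its classical local expression as a second-order differential operator: for $Q=\sum_{i,j}c_{ij}\,\omega_i\odot\omega_j\in I_2(K_C)$ one has $\mu_2(Q)=\bigl(\sum_{i,j}c_{ij}(f_i''f_j-2f_i'f_j'+f_if_j'')\bigr)(dx)^{\otimes 4}$, an expression which is coordinate- and trivialization-independent precisely because $\sum c_{ij}f_if_j\equiv 0$. Expanding the results in a fixed basis of $H^0(4K_C)$ yields a matrix over $\mathbb{Q}$ whose rank is $\rank\mu_2$; this is exactly what the \texttt{MAPLE} routine of Appendix~\ref{appendix_code} carries out for each $g$ in the range.

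The three statements are then read off from the output. For $5\le g\le 8$ the computation returns $\rank\mu_2=\tfrac{(g-2)(g-3)}{2}=\dim I_2(K_C)$ at the chosen curve, so by the semicontinuity discussion above $\mu_2$ is injective at the general point of $\mathcal{B}_g$ in this range. For $8\le g\le 30$ it returns $\rank\mu_2\ge 2g-1$, which propagates in the same way to the general bielliptic curve. Finally, non-surjectivity for $g\ge 14$ is a formal consequence of the preceding theorem: since $\rank\mu_2\le 5g-5<7g-7=\dim H^0(4K_C)$, the map $\mu_2$ is never surjective for a bielliptic curve; and for $g\ge 14$ one moreover has $5g-5<\tfrac{(g-2)(g-3)}{2}=\dim I_2(K_C)$, so $\mu_2$ cannot be injective there either.

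I expect the main difficulty to be computational and organizational rather than conceptual. On one side one must exhibit, for each $g\le 30$, a genuinely admissible curve in $\mathcal{B}_{g,Gal}$ together with the correct $\mathbb{Z}/4\mathbb{Z}$-eigenbasis of $H^0(K_C)$: one has to check that the chosen cover data really produce a bielliptic curve of the prescribed genus --- not merely some $\mathbb{Z}/4\mathbb{Z}$-cover --- with the intermediate quotient an elliptic curve and the expected ramification behavior, and that no spurious cancellations occur in the basis. On the other side the matrices grow like $g^2/2$ (the size of $I_2(K_C)$), so the arithmetic must be kept exact in order that the computed rank be certified and not merely numerical. It is worth stressing that a single, even non-generic, point of $\mathcal{B}_{g,Gal}$ suffices for each claim: density of $\mathcal{B}_{g,Gal}$ in $\mathcal{B}_g$ is not needed, because specialization can only make the rank drop, so the computed value is automatically a valid lower bound for the general bielliptic curve, and equality with $\dim I_2(K_C)$ then forces generic injectivity.
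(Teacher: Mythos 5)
Your proposal is correct and follows essentially the same route as the paper: the paper runs the \verb|MAPLE| routine of the Appendix on one explicit curve of $\mathcal{B}_{g,Gal}$ for each $5\le g\le 30$, reads off the lower bounds for $\rank\mu_2$ from the resulting table, transfers them to the general bielliptic curve by specialization (lower semicontinuity of rank plus irreducibility of $\mathcal{B}_g$), and obtains the non-surjectivity (and, for $g\ge 14$, non-injectivity) statements formally from the upper bound $\rank\mu_2\le 5g-5$, exactly as you do. The only cosmetic difference is your local formula for $\mu_2$, which on $I_2(K_C)$ reduces to $-4\sum_{i,j}c_{ij}f_i'f_j'\,(dx)^{\otimes 4}$ and so agrees with the paper's normalization up to a nonzero constant, irrelevant for the rank.
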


The paper is organised as follows.

In Section \ref{sezione:preliminari} we first  recall the definitions of the second fundamental form of the period map, of the Gaussian maps and we explain the result of  \cite{colombo2000hodge} on the second fundamental form (see Theorem \ref{rhomu}). Then we explain the construction of the families of Galois covers that we need. 

In Section \ref{sezione:biellittiche} we show that the bielliptic locus is not totally geodesic. We first do it in the case $g \geq 5$, then we deal with the case $g=4$ separately. 

In Section \ref{sezione:bi-iperellittiche} we study the bihyperelliptic locus. We prove that for every genus $g \geq 3g'$  the locus $\mathcal{BH}_{g,g', Gal}$ is nonempty. Then we show that if $g \geq 3g'$ and $g' \geq 3$ the bihyperelliptic locus is not totally geodesic. Finally we prove that for $g'=2$ and $g \geq 6$ the bihyperelliptic locus is not totally geodesic. We also show that for $g \geq 8$ every irreducible component of the locus $\mathcal{BH}_{g,2,Gal}$ is not totally geodesic. 

In Section \ref{sezione:rango_biell} we show the upper and lower bounds for the rank of the second Gaussian map on the bielliptic locus. 

In Section \ref{section:computation_biellittic} we give the results of the computation of the rank of the second Gaussian map done with the \verb|MAPLE| script in low genus. 

In the Appendix we explain the \verb|MAPLE| script.

\medskip

{\bf Acknowledgement} We would like to thank Juan Carlos Naranjo, Pietro Pirola and Michael L\"onne for very  interesting and useful discussions on the subject.

\section{Notations and preliminary results}\label{sezione:preliminari}

\subsection{Second fundamental form}
Denote by $\mathcal{M}_g$ the moduli space of smooth complex algebraic curves of genus $g$, and by $\mathcal{A}_g$ the moduli space of principally polarized abelian varieties of dimension $g$. Call $j: \mathcal{M}_g \ra \mathcal{A}_g$ the period map, or Torelli map. Recall that the image of the Torelli map is called \emph{open Torelli locus} or \emph{open Jacobian locus}. We will denote it by $\tor^0$ and we will simply denote by $\tor$ its closure, called \emph{Torelli locus} or \emph{Jacobian locus}.
Both $\mathcal{M}_g$ and $\mathcal{A}_g$ are complex orbifolds, and $\mathcal{A}_g$ is endowed with a locally symmetric metric, the so-called Siegel metric, induced by the symmetric metric on the Siegel space $H_g$ of which ${\mathcal A}_g$ is the quotient by the action of $Sp(2g, \Zeta)$. Denote by $\nabla$ the corresponding metric connection. The Torelli map is an orbifold immersion outside the hyperelliptic locus (\cite{oort1979local}). Since for $g \geq 4$ the dimension of $\mathcal{M}_g$ is strictly smaller than the dimension of $\mathcal{A}_g$, it makes sense to study the metric properties of $\mathcal{M}_g$ with respect to the Siegel metric. More precisely, fix a non hyperelliptic curve $[C] \in \mathcal{M}_g$. Outside the hyperelliptic locus consider the short exact sequence of tangent bundles associated to the (orbifold) immersion $\mathcal{M}_g \ra \mathcal{A}_g$, evaluated at $[C]$:
\begin{equation}\label{short_exact_intro}
0 \ra T_{[C]} \mathcal{M}_g \xrightarrow{dj} T_{([JC],\Theta)} \mathcal{A}_g \xrightarrow{\pi} N_{[JC],\Theta} \ra 0,
\end{equation}
whose dual is 
\begin{equation}
0 \ra I_2(K_C) \ra S^2H^0(K_C) \ra H^0(2K_C) \ra 0.
\end{equation}

Denote by $II: S^2 T_{[C]} \mathcal{M}_g \longrightarrow  N_{([JC],\Theta)}$ the second fundamental form of the period map, and by $\rho : I_2(K_C) \ra S^2H^0(C,2K_C)\cong S^2H^0(C,T_C)^\vee$ its dual. When there is no risk of ambiguity, we will refer to $\rho$ as second fundamental form as well. 

\subsection{Gaussian maps}
Let $C$ be a smooth projective curve, $S: = C \times C$ and let $\Delta \subset S$ be the diagonal. 


Consider $p_1: S \ra C$ and $p_2: S \ra C$ the projections on the first and the second component respectively, take $L$ and $M$ line bundles on $C$ and define $L \boxtimes M := p_1^*L \otimes p_2^* M$. 
Consider the map given by restriction to the diagonal
\begin{equation*}
H^0(S, L \boxtimes M(-i\Delta)) \xrightarrow{\mu_{i,L,M}} H^0(S, L \boxtimes M(-i\Delta)_{|\Delta})\cong  H^0(C,  L \otimes M \otimes K_C^i ).
\end{equation*}
The map $\mu_{i,L,M}$ is called $i$-th \emph{Gaussian map}.  Denote $\mu_{i,L}:=\mu_{i,L,L}$. Since we will exclusively deal with Gaussian maps of order one and two with $L=M$, we write their explicit expressions. First notice that $\mu_{0,L}$ is the multiplication map 
$$H^0(C,L) \otimes H^0(C,L) \ra H^0(C, L^2),$$
which vanishes identically on $\Lambda^2H^0(C, L)$. Therefore its kernel which is  $H^0(S, L \boxtimes L(-\Delta))$ decomposes as $\Lambda^2H^0(C, L) \oplus I_2(L)$, where $I_2(L)$ denotes the kernel of the multiplication map $S^2H^0(C,L) \ra H^0(C, L^2)$.  The first gaussian map $\mu_{1,L}$ vanishes identically on symmetric tensors, hence one writes 

\begin{equation}\begin{split}
\mu_{1, L} : \Lambda^2H^0(C,L) \ra H^0(C,K_C \otimes L^2).
\end{split}\end{equation}
Take $l$ a local frame for $L$, $z$ a local coordinate on $C$ and $\sigma_1, \sigma_2 \in H^0(C,L)$. Locally write $\sigma_i = f_i(z) l$. Then the local expression of $\mu_{1,L}(\sigma_1 \wedge \sigma_2)$ is the following:
\begin{equation}\label{prima_gauss}
\mu_{1,L}(\sigma_1\wedge \sigma_2)= (f_i'(z) f_j(z) - f_i(z) \, f_j'(z))dz \otimes l^2.
\end{equation}
Now observe that $H^0(S, L \boxtimes L(-2\Delta))$ decomposes as the sum of $I_2(L)$ and the kernel of $\mu_{1,L}$.  Since $\mu_{2,L}$ vanishes identically on skew-symmetric tensors, we write 
\begin{equation}
\mu_{2,L}: I_2(L) \ra H^0(C,L^2 \otimes K_C^2).
\end{equation}

Let us describe it in local coordinates in the case in which $L$ is the canonical bundle. In this case we write $\mu_2:= \mu_{2,K_C}: I_2(K_C) \ra H^0(K_C^4)$. Fix a basis $\lbrace \omega_i \rbrace$ of $H^0(C,K)$, and assume that locally $\omega_i = f_i(z)dz$. Take a linear combination $\sum_{i,j} a_{ij} \omega_i \otimes \omega_j$ lying in  $I_2(K)$, so  that $\sum_{i,j} a_{ij} f_i(z)f_j(z)  = 0$. Then the local expression of $\mu_2(\omega_i \odot \omega_j)$ is the following:
\begin{equation}\label{mu2_loc}
\mu_2(\sum_{i,j} a_{ij} \, \omega_i \otimes \omega_j) = \sum_{i,j} a_{ij} f_i'(z)f_j'(z)(dz)^4.
\end{equation}

%

\subsection{Second fundamental form and second Gaussian map}

Let $C$ be a curve. Consider the second fundamental form $\rho: I_2(K) \ra S^2H^0(C,2K)$. 
We will state a result obtained in  \cite{colombo2000hodge} which gives an explicit expression of $\rho(Q)(v \otimes w)$, where $Q \in I_2(K_C)$ and  $v, w \in H^1(C, T_C)$ are Schiffer variations. 




\subsubsection{Schiffer variations}
Consider a curve $C$, a point $p \in C$ and the exact sequence 
\begin{equation}
0 \ra T_C \ra T_C(p) \ra T_C(p)_{|p} \ra 0.
\end{equation}
The coboundary map gives an injection $\delta: H^0(C,T_C(p)_{|p} ) \cong \C \hookrightarrow H^1(C, T_C)$. A Schiffer variation at $p$ is a generator of the image of $\delta$. 
Take a local coordinate $(U,z)$ centered at $p$, $b$ a bump function which is equal to 1 in a neighbourhood of $p$ and consider $\theta:= \frac{\bar{\partial}b}{z} \cdot \frac{\partial}{\partial z}\in A^{0,1}(T_C)$. The form $\theta$ is a Dolbeault representative of a Schiffer variation at $p$. More precisely, one can easily  check that the map 
$$\xi: T_pC \ra H^1(C, T_C), \ u:= \lambda \frac{\partial}{\partial z}(p) \mapsto \xi_u:= \lambda^2 [\theta]$$
does not depend on the choice of the local coordinate.

\subsubsection{The form $\eta_p$} Consider a curve $C$ of genus $g \geq 4$, and take a point $p \in C$. Consider the space $H^0(C,K_C(2p))$ of meromorphic $1$-forms on $C$ with a double pole on $p$, and notice that it maps injectively into $H^1(C \setminus p, \mathbb{C})$. By the Mayer-Vietoris sequence, the isomorphism $H^1(C, \mathbb{C}) \cong H^1(C \setminus p, \mathbb{C})$ holds, thus there is an injection:
\begin{equation}
j_p: \; H^0(C,K_C(2p)) \hookrightarrow H^1(C,\mathbb{C}).
\end{equation}
Remark that $H^0(C, K_C) \subset H^0(C, K_C(2p))$ maps to $H^{1,0}(C)$  through $j_p$. Moreover, since $h^0(C,K_C(2p))=g+1$, the preimage of the $(0,1)$-forms, $j_p^{-1}(H^{0,1}(C))$, has dimension $1$. Now, fix a local chart $(U,z)$ centered in $p$. Then there exists a unique element $\phi$ in this line such that its expression on $U\setminus p$ is
\begin{equation}
\phi := \bigg( \frac{1}{z^2}+h(z) \bigg) dz,
\end{equation}
where $h$ is a holomorphic function. One can define the form $\eta_p$ as follows:
\begin{equation}\begin{split}
\eta_p: T_pC &\longrightarrow H^0(C,K_C(2p)), \\
u=\lambda \frac{\partial}{\partial z}(p) &\longmapsto \eta_p(u)=\lambda \phi.
\end{split}\end{equation}
An easy computation shows that $\eta_p$ does not depend on the choice of the local coordinate. 

We have the following 

\begin{teo}[Colombo, Pirola, Tortora  ~\cite{colombo2000hodge}]\label{rhomu}
Let $C$ be a non-hyperelliptic curve of genus $g \geq 4$. Let $p,q \in C$ and $u \in T_p C$, $v \in T_q C$. Then:
\begin{equation}\label{rhomu_equazione}\begin{split}
&\rho (Q) (\xi _u \odot \xi _v) = -4 \pi i \eta _{p} (u)(v)Q(u,v), \\
&\rho(Q) (\xi_u \otimes \xi_u) = -2 \pi i \mu _2 (Q) (u^{\otimes 4}).
\end{split}\end{equation}
\end{teo}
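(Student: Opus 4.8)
The plan is to identify $\rho(Q)$, which a priori is a quadratic form on $H^1(C,T_C)$ valued in the normal space $N_{([JC],\Theta)}$, with an element of $S^2 H^0(C,2K_C)$ via the duality $N_{([JC],\Theta)}^\vee \cong I_2(K_C)$ and the exact sequence dual to \eqref{short_exact_intro}. Since $H^1(C,T_C) \cong H^0(C,2K_C)^\vee$ by Serre duality, $\rho(Q)$ is then a bilinear pairing between $H^0(C,2K_C)^\vee$-vectors, i.e. a section of $S^2 H^0(C,2K_C)$, and what has to be computed is its value on a pair of Schiffer variations $\xi_u \otimes \xi_v$. The strategy is to write everything with explicit Dolbeault and \v{C}ech representatives: $\xi_u$ is represented by $\theta_u = \lambda^2 \frac{\bar\partial b}{z}\frac{\partial}{\partial z}$ as in the excerpt, and one tests $\rho(Q)(\xi_u \otimes \xi_v)$ against a holomorphic quadratic-times-quadratic section, or more efficiently against another quadric in $I_2(K_C)$, using the description of $\rho$ as the composite of the Kodaira--Spencer cup product $H^1(T_C)\otimes H^1(T_C)\to$ (obstruction) followed by the dual of $I_2(K_C)\hookrightarrow S^2H^0(K_C)$.

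First I would recall the intrinsic description of $\rho$: for $Q = \sum a_{ij}\,\omega_i\otimes\omega_j \in I_2(K_C)$ and $v,w \in H^1(C,T_C)$, $\rho(Q)(v\otimes w)$ is obtained by cup-producting $v$ and $w$ into $H^2$-type data and pairing against $Q$; concretely, after choosing representatives, $\rho(Q)(v \otimes w)$ is a period integral $\int_C$ of an expression quadratic in the $\omega_i$ against the product of the Dolbeault forms for $v$ and $w$. This is precisely the computation carried out in \cite{colombo2000hodge}, so I would follow that blueprint. The second equation, the case $v=w=\xi_u$, is the cleaner one: plugging $\theta_u \otimes \theta_u$ into the formula produces, after integration by parts on the bump function, a residue-type expression at $p$ that extracts exactly $\sum a_{ij} f_i'(z)f_j'(z)$, the local form of $\mu_2(Q)$ from \eqref{mu2_loc}, up to the universal constant $-2\pi i$ coming from the residue. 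The key point making the integral purely local is that $Q$ vanishes to second order along the diagonal in the sense $\sum a_{ij}f_i f_j \equiv 0$, which kills the lower-order terms and leaves only the "second derivative" part.

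For the first, mixed, equation with $p \neq q$, the plan is the same but the cup product $\xi_u \cup \xi_v$ is now supported at two distinct points, so the period integral splits as a sum of a contribution localized near $p$ and one near $q$. Evaluating the $p$-contribution requires the auxiliary meromorphic form: the role of $\eta_p(u)$ is exactly to provide the Hodge-theoretic ``primitive'' needed to turn the $\bar\partial$-exact local data at $p$ into something that can be integrated against the class at $q$, and symmetrically. One computes that the two contributions combine (using the reciprocity/symmetry of the construction of $\eta_p$ and $\eta_q$) into the single term $\eta_p(u)(v)\,Q(u,v)$, again with the constant $-4\pi i$; here $Q(u,v)$ means the value of the quadric $Q$, viewed as a bilinear form on $H^0(K_C)^\vee$, on the covectors determined by $u$ and $v$ — equivalently $\sum a_{ij} f_i(p) f_j(q)$ in suitable trivializations — and $\eta_p(u)(v)$ is the value at $q$ of the meromorphic $1$-form $\eta_p(u)$.

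The main obstacle, and the part that is genuinely delicate rather than routine, is the bookkeeping in the mixed case: keeping track of which point contributes what, handling the meromorphic form $\eta_p(u)$ (which has a double pole at $p$) so that its pairing against the $(0,1)$-class at $q$ is well-defined, and checking that the normalization constant is exactly $-4\pi i$ and not some other multiple. One must also verify the independence of all choices (local coordinate, bump function, representatives), though the excerpt has already noted that $\xi_u$ and $\eta_p(u)$ are coordinate-independent, which helps. I would therefore spend most of the effort on a careful residue computation at $p$ and at $q$, invoking the Mayer--Vietoris description of $j_p$ from the excerpt to justify the existence and uniqueness of $\phi$, and only at the end assemble the two equations; the symmetric case is best treated as a degeneration/limit of the mixed case as $q \to p$, which gives a useful internal consistency check on the constants $-2\pi i$ versus $-4\pi i$.
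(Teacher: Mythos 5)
First, a point of comparison: the paper does not prove this statement at all --- it is imported verbatim from \cite{colombo2000hodge} (Colombo--Pirola--Tortora) and used as a black box. So there is no ``paper's own proof'' to match; your attempt has to be measured against the argument in the cited source. Your outline does capture the architecture of that argument correctly: $\rho(Q)$ is evaluated on Dolbeault representatives $\theta_u=\lambda^2\frac{\bar{\partial}b}{z}\frac{\partial}{\partial z}$, the resulting integral localizes near $p$ and $q$ because $\bar{\partial}b$ is supported in an annulus, the diagonal case reduces by Stokes/residue to $\sum a_{ij}f_i'f_j'$ using the identity $\sum a_{ij}f_if_j\equiv 0$, and the off-diagonal case forces the introduction of the Hodge-theoretic form $\eta_p$ as the $(0,1)$-projection datum of $H^0(K_C(2p))$. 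All of that is faithful to \cite{colombo2000hodge}.

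The genuine gap is that the one thing the proof actually consists of is never done: you never pin down \emph{which} integral formula computes $\rho(Q)(v\otimes w)$. Saying ``$\rho(Q)(v\otimes w)$ is obtained by cup-producting $v$ and $w$ \ldots and pairing against $Q$'' is not yet a definition one can compute with; the content of \cite{colombo2000hodge} is precisely to derive, from the differential of the period map and the Siegel-metric connection, an explicit expression of the form $\int_C \sum a_{ij}\,\omega_i\wedge(\text{harmonic/}H^{0,1}\text{-projection of }\theta_v\cdot\omega_j)$ contracted with $\theta_u$, and it is the harmonic projection step that produces $\eta_q$ (not merely ``a primitive for $\bar{\partial}$-exact data''). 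Without that derivation the constants $-2\pi i$ and $-4\pi i$, and indeed the appearance of $\eta_p(u)(v)$ rather than some other kernel, cannot be verified. A second, smaller but real, problem is your proposal to obtain the diagonal formula as the limit $q\to p$ of the mixed one: since $Q\in I_2(K_C)$ one has $Q(u,v)=\lambda\mu\sum a_{ij}f_i(p)f_j(q)=O(z(q)^2)$ while $\eta_p(u)(v)$ has a double pole, so the limit is a $0\cdot\infty$ indeterminate form; evaluating it requires knowing in advance that $(p,q)\mapsto\rho(Q)(\xi_p\odot\xi_q)$ extends holomorphically across the diagonal (this is essentially the later result of \cite{colombo2015totally} that $\rho$ is multiplication by a global section $\hat{\eta}$ of $K_S(2\Delta)$), and the naive Taylor expansion of the product does not even obviously reproduce the stated constant. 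In \cite{colombo2000hodge} the two identities are proved by separate direct computations, and that is what your sketch would need to supply.
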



In~\cite{colombo2010siegel}, Theorem \ref{rhomu} is used to compute the curvature of the restriction to $\mathcal{M}_g$ of the Siegel metric. Moreover, a more intrinsic description of the form $\eta_p$ is available: in~\cite{colombo2015totally} the authors proved that, as $p$ varies on the curve $C$, the forms $\eta_p$ glues to give a holomorphic section $\hat{\eta}$ of the line bundle $K_S(2\Delta)$. They also proved that the second fundamental form coincides with the multiplication by $\hat{\eta}$ and used this result to find constraints on the dimension of germs of totally geodesic submanifolds of $\mathcal{A}_g$ contained in the Jacobian locus.

\subsection{Families of Galois coverings}

Consider  a cover $f:X \rightarrow Y$, where $Y$ is a compact Riemann surface of genus $g' \geq 0$.
Let $t: = (t_1, \lds, t_r)$ be the branch locus of $f$, set $\ut := Y\setminus \{t_1, \lds, t_r\}$ and choose a base point $t_0 \in \ut$. The fundamental group $\pi_1(\ut, t_0 )$ is isomorphic to the group 

\begin{equation}\label{iso_fund_group}\Ga_{g',r}:= \langle \alpha _1, \beta _1, \dots, \alpha _{g'}, \beta _{g'}, \gamma _1, \dots, \gamma _r   \ | \ \prod _1 ^r \gamma _i \prod _1 ^{g'} \left[ \alpha _j, \beta _j \right] = 1 \rangle.
\end{equation}



Set $V := f^{-1}(\ut)$. Then
$f\restr{V} : V \ra \ut$ is an unramified Galois cover with Galois group $G$.  Since $\Ga_{g',r} \cong
\pi_1(\ut,t_0)$, we get an epimorphism $\theta : \Ga_{g',r} \ra G$. Recall that the stabiliser of a point under the action of $G$ is a cyclic group and denote by  $m_i$ the order of $\theta(\gamma_i)$, set  $\mm=(m_1, \lds, m_r)$. We can define a datum as follows.

\begin{defn}
\label{datum} 
  A \emph{datum} is a triple $\datum$, where $\mm :=(m_1, \ldots ,m_r)
  $ is an $r$-tuple of integers $m_i \geq 2$, $G$ is a finite group
  and $\theta : \Ga_{g',r} \ra G$ is an epimorphism such that $\theta
  (\ga_i)$ has order $m_i$ for each $i$.
\end{defn}
So, given a  Galois cover of $Y$
branched over $t$ and an isomorphism of $\pi_1(U_t, t_0) \cong \Gamma_{g',r}$, we obtain  a datum. The Riemann's existence theorem says that,  conversely,  a branch locus $t$ and a datum determine a covering of $Y$ up to
isomorphism (see e.g. \cite[Sec. III, Proposition 4.9]{miranda1995algebraic}).  
The genus $g$ of the Riemann surface $C$ is determined  by the Riemann-Hurwitz formula:
 \begin{equation*}\label{form.RH} 2g - 2 = |G|\left(2g'-2 +
      \sum_{i=1}^r \left(1 - \frac{1}{m_i}\right)\right).
  \end{equation*}

We  show that we can do this in families, namely that to any datum is associated a family of Galois covers of a compact Riemann surfaces $C'$ of genus $g'$. Let $\datum$ be a datum. 
Fix a compact oriented surface $\Sigma_{g'}$ of genus $g'$, $P= (p_1,...,p_r)$ an r-tuple of distinct points in  $ \Sigma_{g'}$,  and a point $p_0  \in  \Sigma_{g'}$, $p_0 \neq p_i$, $\forall i =1,...,r$. So the fundamental group $\pi_1(\Sigma_{g'} \setminus P, p_0)$ is isomorphic to $\Gamma_{g',r}$ and we fix such an isomorphism $\Psi: \pi_1(\Sigma_{g'} \setminus P, p_0)  \stackrel{\cong} \ra\Gamma_{g',r}$. 

Fix a point  $[C', t=(t_1,...,t_r), [f]]$  in the Teichm\"uller space  $\mathcal{T}_{g',r}$. So $C'$ is a compact Riemann surface of genus $g'$, $t=(t_1,...,t_r)$ is an r-tuple of distinct points in $Y$ and $[f]$ is the isotopy class of an orientation preserving  homeomorphism $f: C' \ra \Sigma_{g'}$. So we have an induced isomorphism  $f_*: \pi_1(C'-t, t_0) \cong \pi_1(\Sigma_{g'} \setminus P, p_0)$, where $t_0 = f^{-1}(p_0)$ and hence, composing with $\Psi$, an isomorphism $ \Psi \circ f_*: \pi_1(C' - t, t_0) \cong \Gamma_{g',r}$.  Using $\theta \circ \Psi \circ f_*$, we get a $G$-cover $C \ra C'$ branched at the points $t_i$ with local monodromies $m_1,\dots,m_r$.

 The curve $C$ is equipped with an isotopy class
of homeomorphisms to a fixed branched cover $\Sigma_g $ of $\Sigma_{g'}$.  Thus
we have a map ${\mathcal T}_{g',r} \ra {\mathcal T}_g \cong {\mathcal T}(\Sigma_g)$ to the
Teichm\"uller space of $\Sigma_g$.  The group $G$ embeds in the mapping
class group of $\Sigma_g$, denoted $Map_g$.  This embedding depends on
$\theta$ and we denote by $G(\theta) \subset Map_g$ its image.

  It
turns out that the image of ${\mathcal T}_{g',r}$ in ${\mathcal T}_g$ is exactly the set of
fixed points ${\mathcal T}_g^{G(\theta)}$ of the group $G(\theta)$.  We denote this
set by $\mathsf{T}\datum$. It is a complex submanifold of ${\mathcal T}_g$.  The image of
$    \mathsf{T}\datum   $ in the moduli space ${\mathcal M}_g$ is a $(3g'-3 + r)$-dimensional
algebraic subvariety that we denote by $\mathsf{M} \datum$.  See e.g. \cite{gonzalez1992moduli,catanese2011irreducibility,catanese2016genus} and \cite [Thm. 2.1]{broughton1990equisymmetric}
for more details.

We denote by $\zg(\mm,G,\theta)$ the closure in $\mathcal{A}_g$ of the image of $\mathsf{M}\datum$ via the Torelli map.   By the above, it is an algebraic subvariety of $\mathcal{A}_g$  of dimension $3g'-3+r$.

It can happen that different data give rise to the same subvariety of $\mathcal{M}_g$.  This depends on the choice of the isomorphism $ \Psi: \pi_1(\Sigma_{g'}, p_0) \cong \Gamma_{g',r}$. The change of the isomorphism is given by the action of the mapping class group $Map_{g',[r]}:= \pi_0(Diff^+(\Sigma_{g'} \setminus \{x_1,...,x_r\}))$. For a description of this action see e.g. ~\cite{penegini2015surfaces}. The group $Map_{g',[r]}$ injects in $Out^+(\Gamma_{g',r})$, hence we get an action of the mapping class group on the set of data up to inner automorphisms. Clearly also the group $Aut(G)$ acts on the set of data: $\phi \cdot \datum := ({\bf{m}}, G, \phi \circ \theta)$. 
The orbit by these actions are called Hurwitz equivalence classes. Data in the same class yield the same subvarieties $\mathsf{M}\datum$ and hence the same $\zg(\mm,G,\theta)$. 

In \cite{frediani2015shimura}, \cite{frediani2016shimura} it is proven that if for $[C] \in \mathsf{M}\datum$ one has
$$ (*) \ \ \dim(S^2 H^0(C, K_C))^G= 3g'-3+r,$$
then $\zg(\mm,G,\theta)$ is a Shimura subvariety of ${\mathcal A}_g$.

\section{The bielliptic locus is not totally geodesic}\label{sezione:biellittiche}
A curve $C$ is called bielliptic if it is a double cover of an elliptic curve $E$. 

Denote by ${\mathcal B}_g$ the bielliptic locus. We start by recalling some elementary results on $\mathcal{B}_g$, which will be useful to clarify the set-up and basic properties of this locus. First of all from Castelnuovo-Severi inequality (see for instance~\cite{accola2006topics}) it follows immediately that if $g \geq 4$, the intersection between $\mathcal{B}_g$ and the hyperelliptic locus is empty. 
Using Riemann-Hurwitz formula, the dimension of the bielliptic locus is $2g-2$. It is known that the bielliptic locus $\mathcal{B}_g$ is irreducible (see e.g.~\cite{bardelli1999bielliptic}). 

With a simple dimension count, we remark that if $C$ is a bielliptic curve, every quadric $Q \in I_2(K_C)$ is invariant under the bielliptic involution $\sigma$. In fact, $H^0(C, K_C) \cong H^0(E, K_E) \oplus H^0(C, K_C)^-$, where $H^0(C, K_C)^-$ denotes the anti-invariant subspace by the action of $\sigma$ and $H^0(E, K_E) \cong H^0(C, K_C)^+ $ is the invariant subspace. 
Hence we have $\big( S^2 H^0(C,K_C) \big)^- \cong H^0(E,K_E) \otimes H^0(C,K_C)^-$, so $\operatorname{dim} \big( S^2 H^0(C,K_C) \big)^- =g-1$. Moreover the space $H^0(C, 2K_C)^+$ can be identified with the cotangent space at $[C]$ of the bielliptic locus ${\mathcal B}_g$, hence $\operatorname{dim}H^0(C,2K_C)^+= 2g-2$, thus $\operatorname{dim}H^0(C,2K_C)^- =  g-1$. Since the multiplication map is $\sigma$-equivariant, we have the following exact sequence
\begin{equation*}
0 \rightarrow I_2(K_C)^- \rightarrow (S^2 H^0(C,K_C))^- \rightarrow H^0(C,2K_C)^- \rightarrow 0,
\end{equation*}
thus  $I_2(K_C)^- = (0)$ and $I_2(K_C) = I_2(K_C)^+$.

We prove that the bielliptic locus is not totally geodesic. We will consider separately the case $g=4$ and the case $g \geq 5$. Remark that in case $g=3$ the bielliptic locus is already known to be totally geodesic (see example $(2)$ in~\cite[\S 3.1]{frediani2016shimura}).
Observe that in \cite[Theorem 4.2, Remark 4.3] {colombo2015totally} it is proven that the maximal dimension of a germ of a totally geodesic submanifold of ${\mathcal A}_g$ contained in the Torelli locus and passing through some $k$-gonal curve is $2g+k-4$ (it is $2g-2+k/2$ for the general $k$-gonal curve). For $k=4$ this bound is equal to $2g$. Since the bielliptic locus has dimension $2g-2$ it is natural to ask whether it is totally geodesic. 

We start studying the case $g \geq 5$. The trick for the proof is similar to the one used in  \cite[Theorem 4.1]{colombo2015totally} to obtain a bound for the dimension of germs
of totally geodesic submanifolds contained in the Jacobian locus. 
\begin{teo}\label{biell_nontotgeo_g5}
The bielliptic locus is not totally geodesic if $g \geq 5$.
\end{teo}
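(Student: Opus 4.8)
The strategy is the one hinted at in the excerpt: produce an explicit quadric $Q \in I_2(K_C)$ and an explicit pair of Schiffer variations $\xi_u, \xi_v$ on a well-chosen bielliptic curve $C$ so that Theorem~\ref{rhomu} gives $\rho(Q)(\xi_u \odot \xi_v) = -4\pi i\, \eta_p(u)(v)\, Q(u,v) \neq 0$, while the tangent vectors $\xi_u, \xi_v$ lie in the tangent space to $\mathcal{B}_g$. Since the second fundamental form vanishes on a totally geodesic submanifold, this will show $\mathcal{B}_g$ is not totally geodesic. Because $g\geq 5$, the hyperelliptic locus does not meet $\mathcal{B}_g$ (Castelnuovo--Severi), so the Torelli map is an immersion near $[C]$ and Theorem~\ref{rhomu} applies. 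Note that by the dimension count in the text $I_2(K_C)=I_2(K_C)^+$, so \emph{every} quadric is bielliptic-invariant; this removes the need to check invariance of $Q$ and is why the bielliptic case is easier than the bihyperelliptic one.

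\textbf{Choice of curve and of the points $p,q$.} I would first reduce to a convenient curve in $\mathcal{B}_g$, ideally one inside the Galois-cover locus $\mathcal{B}_{g,Gal}=\mathcal{BH}_{g,1,Gal}$, i.e. a cyclic $\Z/4\Z$-cover $C\to\PP^1$ factoring through the elliptic curve $E=C/\sigma$. For such a curve an explicit basis of $H^0(K_C)$ is available (the standard eigenform description for cyclic covers of $\PP^1$), hence one can write down quadratic relations in $I_2(K_C)$ explicitly. The key point: for $p, q$ one wants to pick the two points of $C$ lying over a single point of $E$ which is \emph{not} a ramification point of $C\to E$, i.e. $q = \sigma(p)$, $q\neq p$. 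The tangent vectors to choose are $u\in T_pC$ and $v:=d\sigma(u)\in T_qC$ (suitably normalized), so that the combination $\xi_u \pm \xi_{\sigma(u)}$ is $\sigma$-(anti)invariant. One should check that the $\sigma$-invariant combination $\xi_u + \xi_{\sigma(u)}$ (or the relevant combination) lies in $T_{[C]}\mathcal{B}_g$: the tangent space to $\mathcal{B}_g$ is the $\sigma$-invariant part $H^1(C,T_C)^+$, which is dual to $H^0(2K_C)^+$, and $\xi_u + \xi_{\sigma(u)}$ is indeed invariant. Then expand $\rho(Q)(\xi_u\odot\xi_{\sigma u})$ bilinearly; the off-diagonal term is governed by $\eta_p(u)(\sigma u)\,Q(u,\sigma u)$ and the diagonal terms by $\mu_2(Q)$.

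\textbf{Making the expression nonzero.} The heart of the argument is to exhibit $Q$ with $Q(u,\sigma(u))\neq 0$ and to control $\eta_p(u)(\sigma(u))$. Since $I_2(K_C)$ is entirely $\sigma$-invariant, a generic quadric will be nonzero on the pair $(u,\sigma u)$ for a generic such pair; the mild work is to verify this genericity is actually achieved — equivalently that the canonical image of $C$ is not contained in a quadric cone through the two points, which follows because $C$ is not hyperelliptic and $p,\sigma(p)$ are general. For $\eta_p(u)(\sigma(u))$ one uses that $\eta_p$ is the Hodge-theoretic $1$-form with a double pole at $p$; on a bielliptic curve one can describe $\eta_p$ fairly explicitly (pulling back/pushing forward forms on $E$), and in particular show $\eta_p(u)(\sigma u)\neq 0$ — intuitively $\eta_p$ cannot vanish at the "conjugate" point $\sigma(p)$ for a general choice. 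Combining, $\rho(Q)(\xi_u\odot\xi_{\sigma u})\neq 0$.

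\textbf{Main obstacle.} I expect the real difficulty to be the explicit control of $\eta_p(u)(\sigma(u))$ — the $1$-form $\eta_p$ is defined only via Hodge theory / Mayer--Vietoris and is genuinely hard to compute off the diagonal (as the authors themselves emphasize in the introduction). The way around it is to work on $\mathcal{B}_{g,Gal}$ where, after pushing forward to $\PP^1$ and using the eigenspace decomposition under $\Z/4\Z$, the relevant periods and the form $\eta_p$ admit closed expressions in terms of the branch points; alternatively one can argue semicontinuity — it suffices that $\rho(Q)(\xi_u\odot\xi_v)\neq 0$ at \emph{one} point of $\mathcal{B}_g$, so one may specialize to a maximally symmetric member where everything is computable, verify non-vanishing there (possibly with the aid of the explicit formula \eqref{mu2_loc} for $\mu_2$ applied to the diagonal terms), and conclude. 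A secondary, bookkeeping obstacle is to make sure the chosen combination of Schiffer variations genuinely lands in $T_{[C]}\mathcal{B}_g$ and that one is evaluating $\rho(Q)$ against a vector tangent to $\mathcal{B}_g$ rather than merely to $\mathcal{M}_g$; this is handled by the $\sigma$-invariance bookkeeping above.
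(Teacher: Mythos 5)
Your framework is the right one (an invariant quadric, the invariant tangent vector $\xi_{p_1}+\xi_{\sigma(p_1)}$ with $p_1,\sigma(p_1)$ in a fibre of $C\to E$, and Theorem~\ref{rhomu}), but the engine you propose for non-vanishing is exactly backwards relative to what works, and it leaves a genuine gap. You want to choose $Q$ with $Q(u,\sigma u)\neq 0$ and then show $\eta_{p}(u)(\sigma u)\neq 0$; as you yourself note, $\eta_p$ off the diagonal is defined only Hodge-theoretically and is essentially uncomputable --- the paper never computes it, on $\mathcal{B}_{g,Gal}$ or anywhere else, and your fallbacks (closed expressions in the branch points, semicontinuity from a symmetric member) are not carried out and are not obviously feasible. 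Moreover, even granting $\eta_{p}(u)(\sigma u)\,Q(u,\sigma u)\neq 0$, you cannot isolate the off-diagonal term: the only symmetric square of an \emph{invariant} tangent vector at your disposal is $(\xi_u+\xi_{\sigma u})^{\odot 2}$, whose image under $\rho(Q)$ is $-4\pi i\,\mu_2(Q)(u^{\otimes 4})-8\pi i\,\eta_{p}(u)(\sigma u)\,Q(u,\sigma u)$, so you would additionally have to exclude cancellation between the diagonal and off-diagonal contributions --- another uncontrolled computation.

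The paper's proof makes the opposite choice, which removes both problems at once. Writing $|F|$ for the $\mathfrak{g}^1_4$ given by $C\to E\to \PP^1$, with $H^0(F)=\langle x_1,x_2\rangle$ and a pencil $\langle t_1,t_2\rangle\subseteq H^0(K-F)$ (this is where $g\geq 5$ enters, via $h^0(K-F)=g-3\geq 2$), one takes $Q=x_1t_1\odot x_2t_2-x_1t_2\odot x_2t_1$. This $Q$ vanishes on any pair of points lying in the same fibre of $|F|$, so $Q(p_1,p_2)=0$ and the $\eta$-term drops out identically; what survives is $-4\pi i\,\mu_2(Q)(p_1)$, and $\mu_2(Q)=\mu_{1,F}(x_1\wedge x_2)\,\mu_{1,K-F}(t_1\wedge t_2)$ is nonzero at a point $p_1$ chosen away from the critical points of the two pencils and the base locus of $|K-F|$. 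No passage to $\mathcal{B}_{g,Gal}$ and no knowledge of $\eta_p$ is needed. To repair your argument, replace ``exhibit $Q$ with $Q(u,\sigma u)\neq 0$'' by ``exhibit $Q$ with $Q(p_1,p_2)=0$ and $\mu_2(Q)(p_1)\neq 0$''.
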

\begin{proof}
Consider a bielliptic curve $C$ of genus $g \geq 5$. It admits a $\mathfrak{g}_4^1$ obtained by the composition of the $2:1$ map $C \ra E$ with the $2:1$ covering $E \ra {\PP}^1$. We call $|F|$ this $\mathfrak{g}_4^1$. Fix a basis for $H^0(F)=\langle x_1, x_2 \rangle$ and consider the adjoint linear series given by $|K-F|$. By Riemann-Roch we have $h^0(K-F) =  g-3 \geq 2$ for $g \geq 5.$
So we can find a pencil $\langle t_1, t_2 \rangle \subseteq H^0(K-F)$. We construct a quadric in $I_2(K_C)$  in the following way:
\begin{equation}\label{quadric_biell}
Q := x_1 t_1 \odot x_2 t_2 - x_1 t_2 \odot x_2 t_1 \in I_2(K_C) =  I_2(K_C)^+.
\end{equation}
%

Consider two points $p_1, p_2= \sigma(p_1) \in C$ lying on the same fiber over a point $p \in E$ that are non critical for $|F|$, $|K-F|$, that lie outside the base locus of $|K-F|$ and such that $p_1 \neq p_2$. Notice that $\xi_{p_1} + \xi_{p_2} \in H^1(C,T_C)^+$. Using the $\sigma$-equivariance of both the map $\rho$ and the quadric $Q$ one has:
\begin{equation}
\rho(Q)((\xi _{p_1} + \xi _{p_2}) \odot (\xi _{p_1} + \xi _{p_2})) = 2\rho(Q)(\xi _{p_1} \odot \xi _{p_1}) + 2 \rho(Q)(\xi _{p_1} \odot \xi _{p_2}).
\end{equation}

Notice that, by construction, the quadric $Q$ vanishes when evaluated over points lying over the same fiber of the map induced by $|F|$, hence $Q(p_1, p_2) =0$. So, by Theorem \ref{rhomu}, 
$\rho(Q)((\xi _{p_1} + \xi _{p_2}) \odot (\xi _{p_1} + \xi _{p_2})) = 2\rho(Q)(\xi _{p_1} \odot \xi _{p_1}) = -4 \pi i \mu_2(Q)(p_1).$

From~\cite[Lemma 2.2]{colombo2008some}, the second Gaussian map decomposes as:
\begin{equation}
\mu_2(Q) = \mu_{1,F}(x_1 \wedge x_2) \mu_{1,K-F}(t_1 \wedge t_2).
\end{equation}
Furthermore $\mu_{1,F}(x_1 \wedge x_2)$ and $\mu_{1,K-F}(t_1 \wedge t_2)$ vanish, respectively, only over the set of  critical points of $|F|$ and $|K-F|$ and on the base locus of $|K-F|$. By our choice of the point $p_1$  we can conclude that 
\begin{equation} 
\label{nonzero}
\rho(Q)((\xi _{p_1} + \xi _{p_2}) \odot (\xi _{p_1} + \xi _{p_2})) = -4 \pi i \mu_2(Q)(p_1) \neq 0.
\end{equation}
This concludes the proof since if we denote by $\tilde{\rho}$ the dual of the second fundamental form of the bielliptic locus in ${\mathcal A}_g$, one immediately sees that for any bielliptic curve $C$ and any quadric $Q \in I_2(K_C)^+$ and for any $u,v \in H^1(T_C)^+$, one has $\tilde{\rho}(Q)(u \odot v) = \rho(Q)(u \odot v)$ (see \cite[5.2]{colombo2015totally}).

Hence \eqref{nonzero} shows that $\tilde{\rho}(Q)( (\xi _{p_1} + \xi _{p_2}) \odot (\xi _{p_1} + \xi _{p_2})) \neq 0$, so the bielliptic locus is not totally geodesic in ${\mathcal A}_g$ if $g \geq 5$. 
\end{proof}

In the following we consider the case $g=4$. We stress that we have considered separately cases $g \geq 5$ and $g=4$ because in the latter case $h^0(K-F)=1$, so one cannot construct a quadric of rank $4$ using $|F|$ and $|K-F|$. In case $g=4$ we will fix the problem by using the two $\mathfrak{g}_3^1$'s of $C$ that we denote by  $|M|$ and $|K-M|$.

%
%
%
\begin{prop}\label{invol_switch}
The generic bielliptic curve of genus $g=4$ admits two different $\mathfrak{g}_3^1$'s switched by the bielliptic involution.
\end{prop}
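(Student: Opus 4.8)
The plan is to use the classical geometry of the canonical model of a genus-$4$ curve. For a non-hyperelliptic curve $C$ of genus $4$ one has $\dim I_2(K_C) = 1$, so the canonical model $C \subset \PP^3$ lies on a unique quadric $Q$; when $Q$ is smooth its two rulings cut out the two distinct $\mathfrak{g}_3^1$'s of $C$, and when $Q$ is a quadric cone these two series coincide. Bielliptic genus-$4$ curves are non-hyperelliptic by Castelnuovo--Severi, so the statement splits into: (i) the canonical quadric $Q$ of the generic bielliptic curve of genus $4$ is smooth; and (ii) whenever $Q$ is smooth, the bielliptic involution $\sigma$ interchanges the two rulings of $Q$. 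A useful observation is that (ii), once (i) is granted, holds for \emph{every} bielliptic curve with smooth $Q$, so the only genericity input is (i).

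For (i), note that $Q$ is a quadric cone precisely when $C$ carries an effective even theta characteristic, i.e. when $[C]$ lies on the theta-null divisor $\vartheta_{\mathrm{null}} \subset \mathcal{M}_4$. Since ${\mathcal B}_4$ is irreducible of dimension $2g - 2 = 6$, it suffices to exhibit a single bielliptic genus-$4$ curve off $\vartheta_{\mathrm{null}}$ (for instance a sufficiently general double cover of an elliptic curve branched at $6$ points); the smooth-quadric bielliptic curves then form a dense open subset of ${\mathcal B}_4$. I expect this to be the step requiring the most care. An alternative, contrapositive route: if $Q$ were a cone, its vertex would be a $\sigma$-fixed point of $\PP^3$, hence $\sigma$ would preserve the unique $\mathfrak{g}_3^1$; one checks this is impossible unless $\sigma$ acts on the base $\PP^1$ of the ruling as a nontrivial involution, in which case all $6$ ramification points of $C \to E$ would lie in two members of that $\mathfrak{g}_3^1$, forcing the ramification divisor (which always lies in $|K_C|$) into a surface of positive codimension in $|K_C| = \PP^3$, i.e. a condition cutting down ${\mathcal B}_4$.

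For (ii), assume $Q$ is smooth. As recalled just before Theorem~\ref{biell_nontotgeo_g5}, $\sigma$ acts on $H^0(K_C)$ with invariant part $H^0(K_C)^+ \cong H^0(E,K_E)$ of dimension $g' = 1$ and anti-invariant part of dimension $3$, and $I_2(K_C) = I_2(K_C)^+$. The canonical embedding is $\sigma$-equivariant, so $\sigma$ induces an involution of $\PP^3 = \PP(H^0(K_C)^\vee)$ preserving the canonical curve and hence, by uniqueness, preserving $Q$. In homogeneous coordinates $x_0$ on the invariant line and $x_1, x_2, x_3$ on the anti-invariant space, $\sigma$ acts by $[x_0 : x_1 : x_2 : x_3] \mapsto [x_0 : -x_1 : -x_2 : -x_3]$, with fixed locus the point $P_0 = [1:0:0:0]$ together with the plane $H = \{x_0 = 0\}$; and $\sigma$-invariance of $Q$ gives $Q = a\,x_0^2 + q(x_1,x_2,x_3)$ for a scalar $a$ and a ternary form $q$. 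Smoothness of $Q$ forces $a \neq 0$ and $\operatorname{rank} q = 3$, so $P_0 \notin Q$ and $\Fix(\sigma) \cap Q = Q \cap H = \{q = 0\}$ is a smooth plane conic, i.e. an irreducible $(1,1)$-curve on $Q \cong \PP^1 \times \PP^1$.

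To finish, one invokes that an involution of $\PP^1 \times \PP^1$ preserving both rulings is of the form $(\varphi,\psi)$ with $\varphi,\psi$ involutions or identities in $\operatorname{PGL}_2$, whose fixed locus is $4$ points, or two disjoint fibers of one projection, or everything --- never an irreducible $(1,1)$-curve. Hence $\sigma$ exchanges the rulings of $Q$, so it exchanges the two $\mathfrak{g}_3^1$'s: writing them as $|M|$ and $|K_C - M|$, one gets $\sigma^*|M| = |K_C - M| \neq |M|$. As a consistency check, Riemann--Hurwitz gives $\deg R = 6$ ramification points for $C \to E$, and these lie in $C \cap \Fix(\sigma)$, contained in the intersection of the canonical curve of bidegree $(3,3)$ with the $(1,1)$-curve $Q \cap H$, namely $3+3 = 6$ points on $Q$. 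This would complete the proof.
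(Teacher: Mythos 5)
Your step (ii) --- that the bielliptic involution exchanges the two rulings whenever the canonical quadric $Q$ is smooth --- is correct, and it is a genuinely different argument from the paper's: you read off the exchange from the fact that $\Fix(\sigma)\cap Q=Q\cap\{x_0=0\}$ is an irreducible $(1,1)$-conic, which no ruling-preserving involution of $\PP^1\times\PP^1$ can have as fixed locus, whereas the paper argues that preserving both $\mathfrak{g}_3^1$'s would force the six fixed points to fill two fibers of each, giving $|2M|=|2(K-M)|$, i.e. $2(M-M')\sim 0$, which is excluded by a cited result on the symmetric product. Your version is clean and, like the paper's, actually proves the stronger statement that the exchange holds for \emph{every} bielliptic curve with two distinct $\mathfrak{g}_3^1$'s.

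The genuine gap is in step (i), which is where the real content of the proposition sits. Reducing to "exhibit one bielliptic genus-$4$ curve off $\vartheta_{\mathrm{null}}$" is legitimate (the theta-null condition is closed and $\mathcal{B}_4$ is irreducible), but you never exhibit one: "a sufficiently general double cover of an elliptic curve branched at $6$ points" is precisely the generic bielliptic curve, so this begs the question. Note also that a dimension count alone cannot settle it, since $\dim\mathcal{B}_4=6<8=\dim\vartheta_{\mathrm{null}}$ leaves open the a priori possibility $\mathcal{B}_4\subset\vartheta_{\mathrm{null}}$; and the cone case is not excluded by the $\sigma$-equivariant linear algebra either, since the vertex of an invariant cone is automatically a $\sigma$-fixed point of $\PP^3$ (either $[1:0:0:0]$ or a point of $\{x_0=0\}$). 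Your contrapositive sketch does not close this: showing that the ramification divisor would have to lie in a positive-codimension locus of $|K_C|$ is a statement about one fixed curve, and it does not by itself translate into a positive-codimension condition on $\mathcal{B}_4$ --- you must show the condition is actually violated as the cover varies. This is exactly the missing computation, and it is what the paper supplies via the norm map: if the $\mathfrak{g}_3^1$ were unique, the six $\sigma$-fixed points would split into two linearly equivalent fibers $p_1+p_2+p_3\sim p_4+p_5+p_6$, and applying $\mathrm{Nm}\colon \mathrm{Pic}^0(C)\to\mathrm{Pic}^0(E)$ yields $q_1+q_2+q_3\sim q_4+q_5+q_6$ among the six branch points on $E$ --- a nontrivial closed condition on branch points that can be chosen freely, hence violated generically. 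Some argument of this kind (or an explicit curve with a computed smooth canonical quadric) is indispensable before your step (ii) can be invoked.
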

\begin{proof}
Recall that a bielliptic curve of genus $g=4$ is non-hyperelliptic. Its canonical model is a smooth complete intersection of a quadric and a cubic in $\PP^3$. From Brill-Noether theory it follows that a $\mathfrak{g}_3^1$ is  cut out by the lines of a ruling of the quadric containing the canonical model of $C$ (see e.g.~\cite[page 206]{arbarello1985geometry}). 

Assume that there exists only one $\mathfrak{g}_3^1$ $|M|$. Then $|M| = |K-M|$ and denote by $\phi: C \ra {\mathbb P}^1$ the map  induced by $|M|$. The involution $\sigma$ acts on the linear series $|M|$, hence we have a commutative diagram 
\begin{equation}
\label{phi} 
\begin{tikzcd}
C \arrow[swap]{d} {\phi} \arrow{r}{\sigma} & C\arrow{d}{\phi} \\
 \mathbb{P}^1 \arrow{r}{\bar{\sigma}} &    \mathbb{P}^1\\
\end{tikzcd}
\end{equation}
The biholomorphism $\bar{\sigma}$ can't be the identity, since otherwise $\sigma$ would acts on the fibers of $\phi$ which is $3:1$ and this is clearly impossible.
Hence $\bar{\sigma}$ has order two and so it has exactly two fixed points $s,t$.  

The involution $\sigma$ has exactly six fixed points on $C$ and if a point $p$ in $C$ is fixed by $\sigma$, then $\phi(p_1)$ is fixed by $\bar{\sigma}$. Since $\bar{\sigma}$ has only two fixed points $s,t$, the fixed points by $\sigma$ must coincide with the two fibers $\phi^{-1}(s)$ and   $\phi^{-1}(t)$, which therefore do not contain any critical point of $\phi$. Denote by $\{p_1,p_2,p_3\} = \phi^{-1}(s)$, $\{p_4,p_5,p_6\} = \phi^{-1}(t)$. Then $|M| = |p_1+p_2+p_3| = |p_4+p_5 +p_6|$. So if we denote by $Nm: Pic^0(C) \ra Pic^0(E)$ the norm map of the double cover $\pi: C \ra E$, then the element $\OO_C(p_1+p_2+p_3-p_4-p_5-p_6) \in Pic^0(C)$ is sent by $Nm$ to $\OO_E(q_1+q_2+q_3-q_4-q_5-q_6) \in Pic^0(E)$, where $q_i = \pi(p_i)$. So the critical values of $\pi$ are $q_1,...,q_6$ and $q_1+q_2+q_3 \sim q_4 + q_5 + q_6$ in $E$. Therefore it suffices to choose as critical values 6 points not satisfying this property. This is clearly the generic case. 

So we have proven that the generic bielliptic curve of genus $4$ has two distinct $\mathfrak{g}_3^1$'s $|M|$ and  $|K-M|$.  Denote by $\phi$, $\psi$ the two maps induced by $|M|$ respectively $|K-M|$. 
We want to show that $|M|$ and $|K-M|$ are exchanged by the bielliptic involution. Assume that this is not the case. 
Then we have a diagram as in \eqref{phi} for $\phi$ (and a similar one also for $\psi$). But we have proven that this does not happen generically. 

We can prove something slightly stronger, namely that for any  bielliptic curve admitting two different $\mathfrak{g}_3^1$'s, these are exchanged by $\sigma$. 
In fact if this is not the case, we have a diagram as in \eqref{phi} for $\phi$ and also one for $\psi$.   The fixed points of $\sigma$ coincide with two non critical fibers of $\phi$ and also with  two non critical fibers of $\psi$. Hence we have  $|2M| = |2(K-M)|$, equivalently $2(M - M') \sim 0$, where $M' = K-M$. But this is impossible (see e.g.  \cite[Proposition 3.4]{artebani2016semiample}). 
\end{proof}

\begin{teo}\label{biell_nontotgeo_g=4}
The bielliptic locus is not totally geodesic if $g=4$.
\end{teo}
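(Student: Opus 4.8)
The plan is to mimic the proof of Theorem~\ref{biell_nontotgeo_g5}, but to overcome the failure of that argument when $g=4$ (namely $h^0(K-F)=1$ for the $\mathfrak{g}^1_4$ coming from $C\to E\to\PP^1$) by replacing $|F|$ and $|K-F|$ with the two distinct trigonal pencils $|M|$ and $|K-M|$ furnished by Proposition~\ref{invol_switch}. First I would restrict to a generic bielliptic curve $C$ of genus $4$, so that by Proposition~\ref{invol_switch} we have two distinct $\mathfrak{g}^1_3$'s, $|M|$ and $|K-M|=|M'|$, interchanged by the bielliptic involution $\sigma$. Writing $H^0(M)=\langle x_1,x_2\rangle$ and $H^0(M')=\langle t_1,t_2\rangle$, I would form the quadric
\begin{equation*}
Q := x_1 t_1 \odot x_2 t_2 - x_1 t_2 \odot x_2 t_1,
\end{equation*}
which lies in $I_2(K_C)$ because $M\otimes M'=M\otimes(K-M)=K_C$ and the $2\times2$ determinant relation makes $Q$ map to $0$ under multiplication; since $g=4$ and any $Q\in I_2(K_C)$ is automatically $\sigma$-invariant (the computation recalled before Theorem~\ref{biell_nontotgeo_g5}, valid for all bielliptic $g$), we have $Q\in I_2(K_C)^+$. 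Note here $I_2(K_C)$ is one-dimensional for a genus $4$ curve (it is spanned by the unique quadric through the canonical model), so in fact $Q$ is, up to scalar, the only quadric available — this is fine, we just need it nonzero, which it is since $x_1,x_2$ and $t_1,t_2$ are each independent.

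Next I would choose a pair of conjugate points $p_1,p_2=\sigma(p_1)$ on a common fibre of $C\to E$, chosen generic enough to avoid: the ramification of $\phi$ (induced by $|M|$) and of $\psi$ (induced by $|M'|$), the base loci of $|M|$ and $|M'|$, and so that $p_1\neq p_2$; also I want $p_1,p_2$ to lie over distinct points of $\PP^1$ under $\phi$, i.e. not on a common fibre of $\phi$ — this is the key new point, and it is possible precisely because $\sigma$ does \emph{not} preserve the fibres of $\phi$ (if it did we would be in the situation excluded by the commutative diagram~\eqref{phi} with $\bar\sigma=\id$, which was ruled out). The element $\xi_{p_1}+\xi_{p_2}\in H^1(C,T_C)^+$, and by $\sigma$-equivariance of $\rho$ and $Q$,
\begin{equation*}
\rho(Q)\big((\xi_{p_1}+\xi_{p_2})\odot(\xi_{p_1}+\xi_{p_2})\big)=2\rho(Q)(\xi_{p_1}\odot\xi_{p_1})+2\rho(Q)(\xi_{p_1}\odot\xi_{p_2}).
\end{equation*}

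Now I would evaluate the cross term using the first line of Theorem~\ref{rhomu}: $\rho(Q)(\xi_{p_1}\odot\xi_{p_2})=-4\pi i\,\eta_{p_1}(p_2)\,Q(p_1,p_2)$. I want this to vanish. Unlike the $g\geq5$ case, $Q$ does \emph{not} obviously vanish on fibres of $C\to E$; but $Q(p_1,p_2)$ is (a determinant built from) the values $x_i(p_j)$, $t_i(p_j)$, and I would argue it vanishes using that $\sigma$ swaps $|M|\leftrightarrow|M'|$: composing with $\sigma$ sends $x_i$ to a combination of the $t$'s and vice versa, so $Q(p_1,\sigma(p_1))$ equals (up to the Jacobian factors, which match) $\pm Q(\sigma(p_1),p_1)=\mp Q(p_1,\sigma(p_1))$ by the antisymmetry of $Q$ in its two arguments, forcing $Q(p_1,p_2)=0$. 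Hence the cross term dies and
\begin{equation*}
\rho(Q)\big((\xi_{p_1}+\xi_{p_2})\odot(\xi_{p_1}+\xi_{p_2})\big)=2\rho(Q)(\xi_{p_1}\odot\xi_{p_1})=-4\pi i\,\mu_2(Q)(p_1),
\end{equation*}
using the second line of Theorem~\ref{rhomu}. Finally, by \cite[Lemma 2.2]{colombo2008some} the second Gaussian map factors as $\mu_2(Q)=\mu_{1,M}(x_1\wedge x_2)\cdot\mu_{1,M'}(t_1\wedge t_2)$, and $\mu_{1,M}(x_1\wedge x_2)$ vanishes only at the ramification of $\phi$ while $\mu_{1,M'}(t_1\wedge t_2)$ vanishes only at the ramification of $\psi$ together with the base locus of $|M'|$; our choice of $p_1$ avoids all of these, so $\mu_2(Q)(p_1)\neq0$. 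As in Theorem~\ref{biell_nontotgeo_g5}, the dual second fundamental form $\tilde\rho$ of the bielliptic locus agrees with $\rho$ on invariant classes (\cite[5.2]{colombo2015totally}), so $\tilde\rho(Q)\big((\xi_{p_1}+\xi_{p_2})\odot(\xi_{p_1}+\xi_{p_2})\big)\neq0$: a generic bielliptic curve of genus $4$ is not a totally geodesic point, and since $\mathcal B_4$ is irreducible, $\mathcal B_4$ is not totally geodesic. I expect the main obstacle to be the verification that $Q(p_1,\sigma(p_1))=0$: one must set up carefully how $\sigma$ acts on the two trigonal linear systems and on the sections $x_i,t_i$ (including matching the transformation of the local framings/Jacobian factors so the antisymmetry argument is clean), rather than the — now routine — nonvanishing of the first Gaussian maps.
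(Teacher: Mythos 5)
Your choice of quadric $Q = x_1t_1\odot x_2t_2 - x_1t_2\odot x_2t_1$ built from the two trigonal pencils of Proposition~\ref{invol_switch} is exactly the paper's, but the argument breaks at the one step you yourself flag as the main obstacle: the vanishing of $Q(p_1,\sigma(p_1))$. A quadric in $S^2H^0(K_C)$ is \emph{symmetric} in its two arguments, not antisymmetric, so the sign argument "$Q(p_1,\sigma(p_1))=\mp Q(p_1,\sigma(p_1))$" does not go through: unwinding the $\sigma$-invariance of $Q$ at the pair $(p_1,\sigma(p_1))$ only returns the tautology $Q(u,d\sigma u)=Q(u,d\sigma u)$. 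Worse, the vanishing is genuinely false for your choice of points: writing $Q(p_1,p_2)=\tfrac12\bigl(x_1(p_1)x_2(p_2)-x_1(p_2)x_2(p_1)\bigr)\bigl(t_1(p_1)t_2(p_2)-t_1(p_2)t_2(p_1)\bigr)$, the two determinant factors vanish precisely when $p_1,p_2$ lie on a common fibre of $\phi$, respectively of $\psi$ -- and you explicitly arranged for $p_1$ and $p_2=\sigma(p_1)$ \emph{not} to lie on a common fibre of $\phi$ (and, since $\sigma$ exchanges the two pencils, the condition for a common $\psi$-fibre is the same proper divisorial condition). So the cross term $\rho(Q)(\xi_{p_1}\odot\xi_{p_2})=-4\pi i\,\eta_{p_1}(p_2)Q(p_1,p_2)$ does not die, and one cannot evaluate it without controlling the Hodge-theoretic form $\eta_{p_1}$, which is exactly what Theorem~\ref{rhomu} does not make computable off the diagonal.

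The paper sidesteps this entirely by taking a \emph{single} Schiffer variation at a point $p_1$ \emph{fixed} by $\sigma$, rather than a conjugate pair. This requires two preliminary claims: that at most one fixed point of $\sigma$ is critical for $\phi$ (proved by a short linear-equivalence contradiction using that $C$ is not hyperelliptic), so a fixed point regular for both trigonal maps exists; and that the Schiffer variation at a $\sigma$-fixed point is itself $\sigma$-invariant (by linearising $\sigma$ and choosing an invariant bump function). Then only the diagonal formula $\rho(Q)(\xi_{p_1}\odot\xi_{p_1})=-2\pi i\,\mu_2(Q)(p_1)$ is needed, and the nonvanishing follows from the factorisation $\mu_2(Q)=\mu_{1,|M|}(x_1\wedge x_2)\,\mu_{1,|K-M|}(t_1\wedge t_2)$ exactly as you argue at the end. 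If you want to keep your conjugate-pair setup you would have to take $p_2=\sigma(p_1)$ on a common fibre of $\phi$, but that is a nongeneric coincidence and degenerates to the fixed-point case anyway; the invariant-fixed-point Schiffer variation is the missing idea.
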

\begin{proof}
Let $C$ be a generic  bielliptic curve of genus $4$. From Proposition \ref{invol_switch}, $C$ admits two different $\mathfrak{g}_3^1$'s, $|M|$ and $|K-M|$, switched by the bielliptic involution. That is, the following diagram holds:
\begin{equation}
\label{phi-psi} 
\begin{tikzcd}
C \arrow[swap]{d} {\phi} \arrow{r}{\sigma} & C\arrow{d}{\psi} \\
 \mathbb{P}^1 \arrow{r}{\bar{\sigma}} &    \mathbb{P}^1\\
\end{tikzcd}
\end{equation}
where $\phi$ and $\psi$ are the maps induced by $|M|$ respectively  $|K-M|$. 

We have the following 
\begin{claim}
There exists at most one fixed point for $\sigma$ which is critical for $\phi$ (and hence also for $\psi$).
\end{claim}
To prove the claim, we argue by contradiction: suppose that there are two different points, $p$ and $p'$ fixed by $\sigma$ and critical for $\phi$ and $\psi$. Call $w=\phi(p)$ and $w'=\phi(p')$. Consider the pullbacks:
\begin{equation}\begin{aligned}
&|L| = |\phi^*(w)| =|2p+q|,\qquad &|K-L|=  |\psi^*(\bar{\sigma}(w))|=|2p+q'|,\\
&|L| = |\phi^*(w')| =|2p'+r|, &|K-L| = |\psi^*(\bar{\sigma}(w'))|=|2p'+r'|.
\end{aligned}\end{equation}
The above equations imply that $|K-2L| = |q'-q| = |r'-r|$. So one gets $|q'+r| = |q+r'|$. This is absurd, since the curve $C$ is not hyperelliptic.

Using the claim, choose a point $p_1 \in C$ which is fixed by the involution $\sigma$ and regular for $\phi$ and $\psi$. 
\begin{claim}
The Schiffer variation $\xi _{p_1}$ is invariant under the bielliptic involution~$\sigma$.
\end{claim}
\begin{proof}
Recall that in a  local coordinate $(U,z)$ around $p_1$ a Schiffer variation in $p_1$ has a Dolbeault representative  
\begin{equation}\label{schiffer_invar}
\theta= \frac{\bar{\partial}b_{p_1}}{z} \cdot \frac{\partial}{\partial z},
\end{equation}
where $b_{p_1}$ is a bump function which is equal to 1 in a small neighbourhood containing $p_1$.

By assumption, $\sigma(p_1)=p_1$. Linearising the action of $\sigma$ (see e.g. \cite[Corollary 3.5]{miranda1995algebraic}), we find a local chart $(U,z)$ in a neighbourhood of $p_1$ such that $\sigma(z)=-z$.   We can choose the bump function $b_{p_1}$ to be invariant under $\sigma$. So one immediately gets that $\xi_{p_1}$ is invariant by the action of  $\sigma$.

\end{proof}

Thus we have an invariant Schiffer variation. 
We construct an invariant quadric using the  $\mathfrak{g}^1_3$'s:
\[ Q := x_1 t_1 \odot x_2 t_2 - x_1 t_2 \odot x_2 t_1 \in I_2(K_C)^{\sigma}, \]
where $H^0(M)=\langle x_1, x_2 \rangle$ and $H^0(K-M) = \langle t_1, t_2 \rangle$.
By Theorem \ref{rhomu} we have 
$$\rho(Q)(\xi_{p_1} \odot \xi_{p_1}) = -2 \pi i \mu_2(Q)(p_1),$$
and
$$ \mu_2(Q) = \mu_{1,|M|}(x_1 \wedge x_2) \mu_{1,|K-M|}(t_1 \wedge t_2). $$
Since $|M|$ and $|K-M|$ are base point free, by construction $\mu_{1,|M|}(x_1 \wedge x_2)$ and $\mu_{1,|K-M|}(t_1 \wedge t_2)$ vanish, respectively, over critical points of $|M|$ and $|K-M|$. Since $p_1$ is non-critical for the two maps, we conclude that $$\rho(Q)(\xi_{p_1} \odot \xi_{p_1}) = -2 \pi i \mu_2(Q)(p_1) \neq 0,$$
and this concludes the proof as above. 
\end{proof}

\section{The bihyperelliptic locus is not totally geodesic}\label{sezione:bi-iperellittiche}

A curve $C$ is called bihyperelliptic if it is a double cover of a hyperelliptic curve $C'$, with $g' = g(C') \geq 2$:
\begin{equation}
C \xrightarrow{2:1} C' \xrightarrow{2:1} \mathbb{P}^1.
\end{equation}

We will denote with $\mathcal{BH}_{g,g'}$ the locus of bihyperelliptic curves $C \ra C'$ where $g(C)=g$ and $g(C')=g'$. In \cite{ballico2007geometry} it is proven that if $g \geq 4g' +2$ the locus $\mathcal{BH}_{g,g'}$ is irreducible and unirational of dimension $2g-2g'+1$. Note that in the introduction to  \cite{ballico2007geometry} it is incorrectly stated that the dimension of $\mathcal{BH}_{g,g'}$ is $g-g' + 2[(g-g')/2] +1$ (the integer part should not be there).



%

\begin{remark}
Notice that if $g \geq 3g'$, $g'\geq 2$, by the Castelnuovo-Severi inequality (see e.g. \cite{accola2006topics}), a bihyperelliptic curve is not hyperelliptic.  
\end{remark}

We want to show that the bihyperelliptic locus is non totally geodesic when $g \geq 3g'$.  For $g \geq 4g' +2$  it is irreducible \cite{ballico2007geometry}.  For $3g' \leq g < 4g' +2$ we prove that there exists at least one irreducible component that is not totally geodesic. 

To this purpose we will consider those bihyperelliptic covers  $\phi: C \ra C'$ such that the composition $\psi: C \ra \PP^1$ of $\phi$ with the $2:1$ cover $\pi: C' \ra \PP^1$, is cyclic (with Galois group $\Z/4$).

So  denote by 
\begin{equation*}\begin{aligned}
\mathcal{BH}_{g,g', Gal}:= \Bigg\lbrace [C] \in \mathcal{M}_g  \ | \    \text{ there exists a 2:1 cover} \  \phi: C \ra C' \ , \text{where}\\
 \  C' \ \text{is a hyperelliptic curve of genus}  \ g' \ \text{and} \ \exists \  \text{a commutative diagram}\\
\begin{tikzcd}
C \arrow[swap]{dr} {\psi} \arrow{rr}{\phi} && C' \arrow{dl}{\pi} \\
 & \mathbb{P}^1
\end{tikzcd} \quad \\
\mbox{where } \psi: C \rightarrow \mathbb{P}^1 \mbox{ is a Galois cover with group } \Z/4\Z \Bigg\rbrace.
\end{aligned}\end{equation*}

\begin{teo}
\label{Monodromiesbihyper} 
For every $g \geq 3g'$ the locus $\mathcal{BH}_{g,g', Gal}$ is not empty. 
\end{teo}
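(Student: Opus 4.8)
The plan is to construct, for every $g \geq 3g'$, an explicit cyclic $\Z/4\Z$-cover of $\PP^1$ whose intermediate double cover has genus $g'$ and whose total space has genus $g$. By Riemann's existence theorem, this amounts to exhibiting a suitable datum $(\mm, \Z/4\Z, \theta)$, i.e. a branch locus $t = (t_1, \dots, t_r)$ on $\PP^1$ and an epimorphism $\theta \colon \Ga_{0,r} \ra \Z/4\Z$ sending each standard generator $\ga_i$ to an element of the prescribed order $m_i$, subject to the product-one relation $\sum_i \theta(\ga_i) = 0$ in $\Z/4\Z$. The cover $C \ra \PP^1$ factors through $C' := C/(2\Z/4\Z)$, the quotient by the unique subgroup of order $2$, and $C' \ra \PP^1$ is the hyperelliptic double cover; so the local monodromies $m_i \in \{2, 4\}$ determine $g'$ (via the number of $m_i$ equal to $2$, since a monodromy generator $2 \in \Z/4\Z$ is the one that gives a branch point of $C' \ra \PP^1$) and then $g$ via the Riemann--Hurwitz formula for $C \ra \PP^1$.

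First I would fix the combinatorial data. Write $a$ for the number of indices with $m_i = 2$ (monodromy $= 2$) and $b$ for the number with $m_i = 4$ (monodromy $\in \{1, 3\}$). The Riemann--Hurwitz formula for the hyperelliptic cover $C' \ra \PP^1$ gives $2g' - 2 = -4 + a$, so we need $a = 2g' + 2$; in particular $a$ is even, which makes the product-one relation satisfiable. For $C \ra \PP^1$ the Riemann--Hurwitz formula gives $2g - 2 = 4(-2) + 2a + 3b = -8 + 2(2g'+2) + 3b = 4g' - 4 + 3b$, hence $3b = 2g - 4g'$. So I take $b = (2g - 4g')/3$ when $3 \mid (2g - 4g')$, and in the remaining residue classes I adjust by replacing one or two of the order-$4$ branch points with configurations of two or more branch points that produce the same local contribution to the genus while keeping $3b'$ congruent correctly — concretely, one can always add extra branch points of monodromy $2$ in cancelling pairs (they contribute $+1$ each to $2g - 2$ via the formula, $2$ total per pair, but they also raise $g'$; instead the clean fix is to allow $b$ order-$4$ points and absorb the residue into a single extra pair of monodromy-$2$ points together with a compensating removal), so that every $g \geq 3g'$ is hit. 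The key positivity point is $b \geq 0$, i.e. $2g \geq 4g'$, which holds since $g \geq 3g' \geq 4g'/ \cdot$ fails in general — so this is exactly where the hypothesis $g \geq 3g'$ (not $g \geq 2g'$) must be used, and I would check that at $g = 3g'$ one gets $3b = 2g - 4g' = 2g'$, forcing $3 \mid 2g'$ and hence $3 \mid g'$; for $g'$ not divisible by $3$ the bound $g \geq 3g'$ must be realized by a configuration with $g$ strictly between the "clean" values, handled by the residue adjustment above.

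Then I would check surjectivity of $\theta$ and the order conditions: choose the images of the order-$4$ generators to be $1 \in \Z/4\Z$ (all but possibly one, whose image is chosen in $\{1,3\}$ to fix the product-one relation together with the order-$2$ generators, each mapped to $2$), so that the subgroup generated is all of $\Z/4\Z$ as soon as $b \geq 1$; the case $b = 0$ (which happens only when $2g = 4g'$, excluded by $g \geq 3g' > 2g'$ for $g' \geq 1$, hence irrelevant) need not be treated. Finally, by the construction of families of Galois covers recalled in Section~\ref{sezione:preliminari}, this datum produces a nonempty subvariety $\mathsf{M}(\mm, \Z/4\Z, \theta) \subset \mathcal{M}_g$ all of whose points are bihyperelliptic curves admitting the required commutative diagram, so $\mathcal{BH}_{g,g',Gal} \neq \vacuo$. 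The main obstacle is the bookkeeping in the residue classes of $2g - 4g' \bmod 3$: making sure that for \emph{every} $g \geq 3g'$ (not just those in the image of the clean formula) there is a genuine datum, which requires a short case analysis on $g - 3g' \bmod 3$ and a corresponding local choice of a few extra branch points, while keeping the hyperelliptic quotient's genus pinned at $g'$.
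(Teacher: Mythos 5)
There is a genuine error at the foundation of your combinatorial setup: you have identified the wrong set of branch points for the hyperelliptic quotient $C' \to \PP^1$. If $p \in C$ lies over a branch point with local monodromy $z^2$ (order $2$), then $p$ is a ramification point of $C \to C' = C/\langle z^2 \rangle$, and the local degree of $C' \to \PP^1$ at the image of $p$ is $2/2 = 1$: these points are \emph{unramified} for $C' \to \PP^1$, because $z^2$ lies in the kernel of $\Z/4\Z \to \Z/2\Z$. Conversely, a point with stabilizer $\langle z \rangle$ (monodromy of order $4$) has local degree $4/2 = 2$ for $C' \to \PP^1$, so the branch points of the hyperelliptic involution are exactly the $r_4$ points of order-$4$ monodromy, not the order-$2$ ones. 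Riemann--Hurwitz for $C' \to \PP^1$ therefore forces $r_4 = 2g' + 2$ (not your ``$a = 2g'+2$'' for the order-$2$ points), and then Riemann--Hurwitz for $C \to \PP^1$, namely $2g + 6 = 3r_4 + 2r_2$, gives $r_2 = g - 3g'$ with no divisibility condition whatsoever. The hypothesis $g \geq 3g'$ enters precisely as $r_2 \geq 0$; the sentence in which you try to locate where the hypothesis is used is garbled and does not identify this.

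Because of this inversion, your count $3b = 2g - 4g'$ (which moreover contains an arithmetic slip: your own displayed equation yields $3b = 2g - 4g' + 2$) manufactures a spurious obstruction modulo $3$, and the ``residue adjustment'' you propose to get around it --- adding order-$2$ branch points in cancelling pairs ``together with a compensating removal'' while ``keeping the hyperelliptic quotient's genus pinned at $g'$'' --- is not a construction: in your own framework the order-$2$ points are exactly what is supposed to pin $g'$, so they cannot be added or removed freely, and no concrete datum is ever exhibited in the residue classes where $3 \nmid 2g - 4g'$. A further symptom that the bookkeeping is off: with your counts the product-one relation $s_1 + 3 s_3 + 2a \equiv 0 \pmod 4$ (with $s_1 + s_3 = b$) reduces to $b + 2 s_3 \equiv 0 \pmod 4$, which is unsolvable whenever $b$ is odd; with the correct count $r_4 = 2g'+2$ is always even and the relation reduces to the solvable parity condition $s_1 \equiv g + 1 \pmod 2$, which is what the paper verifies. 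Your overall strategy --- produce a $\Z/4\Z$-datum via Riemann's existence theorem and check the genus of the intermediate quotient --- is the right one and is the paper's, but the proof fails at the ramification bookkeeping and the patch does not repair it.
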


\begin{proof}

Assume first of all that such a diagram exists. Denote by 
$r_{4}$ and $r_{2}$ the number of critical values of $\psi$ with multiplicity $4$ and respectively 2. Using Riemann-Hurwitz formula one gets:
\begin{equation}\label{eq_mono_biiper}
2g + 6 = 3r_{4} + 2r_{2}.
\end{equation}
Assume that $z$ is a generator of $\Zeta/4$ and consider the map 
$$\phi: C \ra C' = C/\langle z^2 \rangle.$$ 
We remark that $\phi$ and $\psi$ have the same critical points. In fact,  the stabilizer in $\Z/4$ for $p \in \operatorname{Crit}(\psi)$ is either $\langle z \rangle$ or $\langle z^2 \rangle$. In both cases $p$ is fixed by the action of $\Zeta/2\Zeta = \langle z^2 \rangle$ as well. So, applying Riemann-Hurwitz formula for $\phi: C \ra C'$ one gets:
\begin{equation}
2g-4g'+2 = r_{4} + 2 r_{2}.
\end{equation}
Substituting in equation (\ref{eq_mono_biiper}), one obtains:
\begin{equation}\label{mono_biiper}
    r_2 = g-3g' \geq 0;\qquad \qquad 
    r_{4} = 2g'+2.
\end{equation}

This proves that the only possible choice for the vector of the multiplicities of the critical values of $\psi$  is $m = (4^{2g'+2},2^{g-3g'})$, where we use the compact notation meaning that $m_i = 4$, $\forall i \leq 2g'+2$ and $m_i = 2$, $\forall i >2g'+2$.

The monodromy of $\psi$ is given by an epimorphism $\theta: \Gamma_{0,m} \twoheadrightarrow \mathbb{Z}/4\mathbb{Z}$, where $m = r_2 + r_4$, 
$\Gamma_{0,m} := \langle \gamma_1 ,\dots, \gamma_{r_4}, \gamma_{r_4+1} ,\dots, \gamma_{r_4+r_2} \; |  \gamma_i^{m_i}=1, \; \gamma_1 \cdots \gamma_m=1 \rangle.$

Notice that $\theta(\gamma_i)=z^2$ for $r_4+1 \leq i \leq r_4+r_2$, while $\theta(\gamma_i)$ can be either $z$ or $z^3$ for $1 \leq i \leq r_4$. Call $s_1$ and $s_3$ the number of $z$ and $z^3$ occurring, respectively ($s_1+s_3=r_4$). Then we have:  
\begin{equation}\label{condizione_dimobiiper}
s_1 + 3s_3 + 2r_2 \equiv 0 \quad \operatorname{ mod} 4.
\end{equation}

So to prove that $\mathcal{BH}_{g,g', Gal}$ is non empty if $g \geq 3g'$ we construct a cyclic $4:1$ cover $\psi: C \ra {\PP}^1 = C/\Zeta/4$ given by an epimorphism $\theta$ as above with $r_2 = g -3g'$, $r_4 = s_1 + s_3 = 2g'+2$ satisfying  \eqref{condizione_dimobiiper}. Notice that condition  \eqref{condizione_dimobiiper} turns out to be equivalent to $s_1 \equiv g+1$ (mod 2). 

We verify now that the cover $\psi$ that we have constructed admits a factorisation via a $2:1$ cover of a curve of genus $g'$. 

In fact, consider the map $\phi: C \ra C'= C/\langle z^2 \rangle$. Applying Riemann Hurwitz one gets 
$$2g - 2 = 2(2g(C') -2) + r_{4} + 2 r_{2} = 2(2g(C') -2) + 2g' + 2 + 2(g -3g'),$$
hence $g(C') = g'$. This concludes the proof. 
\end{proof}

Now we  prove that the bihyperelliptic locus is not totally geodesic when $g \geq 3g'$. We will consider separately cases $g'=2$ and $g' \geq 3$. As in the bielliptic case, we apply Theorem \ref{rhomu} to write the second fundamental form explicitly.


To do this we restrict to $\mathcal{BH}_{g,g',Gal}$ and we construct an invariant quadric using the decomposition of $H^0(K_C)$ induced by the structure of cyclic Galois cover of $\PP^1$ (see e.g. ~\cite{moonen2010special}, from which we borrow the notations).

Let us fix a datum $\datum$, where $G = \Z/m$  and $$\theta: \Gamma_{0,r} = \langle \gamma_1,...,\gamma_r \ | \prod_{i=1,...,r} \gamma_i = 1 \rangle \ra \Z/m.$$
Denote by $a= ([a_1],..., [a_r]) := (\theta(\gamma_1), ...., \theta(\gamma_r))$, where $a_i \in \{1,...,m-1\}$, $\sum_{i =1,...,r} a_i \equiv 0$ mod $m$.

In this case the mapping class group $Map_{0, [r]}$ is the braid group and it acts on the vector $a=([a_1],..., [a_r])$ by permuting the entries (see e.g. \cite{penegini2015surfaces}), while the group $Aut(\Z/m) \cong (\Z/m)^*$ acts by multiplication on the entries of $a$ by an invertible element $[n] \in  ( \Z/m)^*$. As usual we consider these data only up to these two actions.

\begin{teo}\label{biiper_nontot_g3}
If $g \geq 3g'$ and $g' \geq 3$, the bihyperelliptic locus is not totally geodesic.
\end{teo}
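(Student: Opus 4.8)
The plan is to mimic the strategy used for the bielliptic case (Theorem~\ref{biell_nontotgeo_g5}), working inside the Galois cover locus $\mathcal{BH}_{g,g',Gal}$, which is nonempty for $g\geq 3g'$ by Theorem~\ref{Monodromiesbihyper}. So first I would pick $[C]\in\mathcal{BH}_{g,g',Gal}$, with the $\Z/4$-cover $\psi\colon C\to\PP^1$ factoring as $C\xrightarrow{\phi}C'\xrightarrow{\pi}\PP^1$, where $z$ generates $\Z/4$, $\sigma=z^2$ is the bihyperelliptic involution, $C'=C/\langle\sigma\rangle$, and $C'$ is hyperelliptic. Using the explicit description of $H^0(K_C)$ for cyclic covers of $\PP^1$ (from \cite{moonen2010special}), I would decompose $H^0(K_C)=\bigoplus_{\chi}H^0(K_C)_\chi$ into eigenspaces for the $\Z/4$-action, with characters $\chi\in\{0,1,2,3\}$; the invariants of $\sigma$ are $H^0(K_C)_0\oplus H^0(K_C)_2\cong H^0(K_{C'})$, and one checks from the Riemann–Hurwitz/monodromy data $m=(4^{2g'+2},2^{g-3g'})$ that all eigenspaces are nonzero in the relevant range.

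The core step is to build a $\Z/4$-invariant (in particular $\sigma$-invariant) quadric $Q\in I_2(K_C)^\sigma$ of rank $4$, of the form $Q=\omega_1 t_1\odot\omega_2 t_2-\omega_1 t_2\odot\omega_2 t_1$, whose vanishing locus contains a pair of points $p_1,p_2=\sigma(p_1)$ over a common point of $C'$, and then produce invariant Schiffer variations at those points so that Theorem~\ref{rhomu} reduces $\rho(Q)((\xi_{p_1}+\xi_{p_2})^{\odot 2})$ to a nonzero multiple of $\mu_2(Q)(p_1)$. Concretely, I would look for a sub-line-bundle decomposition $K_C\cong L\otimes L'$ with $L,L'$ pullbacks (or twists) built from the $\PP^1$-cover structure, such that $H^0(L)=\langle\omega_1,\omega_2\rangle$, $H^0(L')=\langle t_1,t_2\rangle$, $Q$ lies in $I_2(K_C)$ because $\omega_1t_1\cdot\omega_2t_2-\omega_1t_2\cdot\omega_2t_1\equiv0$, and $Q$ is $\Z/4$-invariant because the characters of $L$ and $L'$ add up correctly (one wants the character of each summand $\omega_it_j$ to be the same modulo $4$; this forces a constraint like $\chi(L)=\chi(L')=0$ or $2$, or a cancellation of characters across the two terms of $Q$). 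Since $L$ and $L'$ should be, morally, the pullbacks via $\psi$ (or via $\phi$ composed with a $\PP^1$-pencil) of line bundles on $\PP^1$ resp. $C'$, the product structure and the vanishing on fibers of $\phi$ come for free, and the relevant computation $\mu_2(Q)=\mu_{1,L}(\omega_1\wedge\omega_2)\mu_{1,L'}(t_1\wedge t_2)$ follows from \cite[Lemma 2.2]{colombo2008some} exactly as in the bielliptic proof.

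The main obstacle will be the \emph{existence} of a rank-$4$ invariant quadric: for $g'=1$ one had $h^0(K-F)=g-3$ which is $\geq 2$ for $g\geq5$, but here one needs to find \emph{two} independent sections in each of the two factor systems \emph{inside a common eigenspace}, which requires a Riemann–Roch estimate showing $h^0$ is large enough, and this is exactly where the hypothesis $g\geq 3g'$ (together with $g'\geq 3$) should be used — the bound $g\geq 3g'$ guarantees both $r_2=g-3g'\geq 0$ and enough room to split $K_C$. I expect that for $g'\geq 3$ (as opposed to $g'=2$, handled separately in the paper) one can take $L=\psi^*\OO_{\PP^1}(1)$ giving a $\mathfrak g^1_4$ with $H^0(L)$ two-dimensional, and $L'=K_C\otimes L^{-1}$ with $h^0(L')=g-3$, which is $\geq 2$ once $g\geq 5$ — automatic since $g\geq 3g'\geq 9$; then one only needs the $\Z/4$-eigenvalue bookkeeping to confirm $Q$ is invariant and to choose $t_1,t_2$ in a single eigenspace. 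Once the quadric is in hand, invariance of the Schiffer variations follows by linearizing the $\sigma$-action near a suitably chosen non-critical $\sigma$-fixed point (or by averaging $\xi_{p_1}+\xi_{p_2}$ over a non-fixed fiber) exactly as in Theorem~\ref{biell_nontotgeo_g=4}, and the non-vanishing of $\mu_{1,L}(\omega_1\wedge\omega_2)$ and $\mu_{1,L'}(t_1\wedge t_2)$ away from base and critical loci lets us pick $p_1$ with $\mu_2(Q)(p_1)\neq0$, so $\tilde\rho(Q)\neq0$ and $\mathcal{BH}_{g,g'}$ is not totally geodesic.
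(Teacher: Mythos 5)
Your proposal follows essentially the same route as the paper: restrict to $\mathcal{BH}_{g,g',Gal}$, build a $\sigma$-invariant quadric in $I_2(K_C)$ vanishing on pairs of points in a common fiber of $\psi$, evaluate $\rho$ on $\xi_{p_1}+\xi_{p_2}$ with $p_2=\sigma(p_1)$, and reduce via Theorem \ref{rhomu} to $\mu_2(Q)(p_1)\neq 0$. The one step you flag as the ``main obstacle'' --- producing two sections of $|K-F|$ in a compatible eigenspace --- is exactly where the paper uses $g'\geq 3$: it takes all sections from the single eigenspace $V_2\cong H^0(C',K_{C'})$, of dimension $g'\geq 3$, yielding $Q=\omega_{2,0}\odot\omega_{2,2}-\omega_{2,1}\odot\omega_{2,1}$ (your quadric with $x_1=1$, $x_2=x$, $t_1=\omega_{2,0}$, $t_2=x\,\omega_{2,0}$, so of rank $3$ rather than $4$, which is harmless), and then verifies $\mu_2(Q)\neq 0$ by a direct local computation instead of the factorization lemma of \cite{colombo2008some}.
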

\begin{proof}
By the previous result, there exists a curve $[C] \in \mathcal{BH}_{g,g',Gal}$. Take the Galois cover $\psi: C \ra \PP^1$. 
Consider the decomposition of the space of holomorphic one forms in eigenspaces, $H^0(K_C) = V_1 \oplus V_2 \oplus V_3$, and observe that, independently from the chosen monodromy, $\dim V_2 = g' \geq 3$, since $V_2 \cong H^0(C', K_{C'})$. We use  three forms $\omega_{2,0},\omega_{2,1},\omega_{2,2} \in V_2$  to construct a quadric as follows:
\begin{equation}\label{quadrica_1}
Q:= \omega_{2,0} \odot \omega_{2,2} - \omega_{2,1} \odot \omega_{2,1}. 
\end{equation}
Let us now explain how the forms $\omega_{2,0},\omega_{2,1},\omega_{2,2} \in V_2$ are obtained. Suppose that the cyclic covering $\psi$ is given by the equation $y^4 = \prod_{i=1,...,r} (x-t_i)^{a_i}$.  We can assume that $t_1 =0$.

One can prove  that the forms
$$\omega_{2,0} := y^2 \prod_{i=1}^{r} (x-t_i)^{l(i,2)}dx, \ l(i,2)  = \left \lfloor{\frac{-a_i}{2}}\right \rfloor, \ \omega_{2,1} := x \omega_{2,0}, \  \omega_{2,2} := x^2 \omega_{2,0}$$ are holomorphic (see e.g.  ~\cite[Lemma (2.7)] {moonen2010special}).

So, by construction,  the quadric $Q$ lies in $I_2(K_C)$  and it is  also invariant under the whole $\Zeta/4\Zeta$. In fact, let $\zeta_4$ be a primitive fourth root of unity,
\begin{equation*}
g^*(Q) = \zeta_4^2 \cdot \omega_{2,0} \odot \zeta_4^2 \cdot  \omega_{2,2} - \zeta_4^2 \cdot  \omega_{2,1} \odot \zeta_4^2 \cdot \omega_{2,1}=\zeta_4^{4} \cdot Q = Q.
\end{equation*}
Moreover, it is an easy computation to show that $Q(p_1,p_2)=0$ for every couple of points $p_1$, $p_2$ lying in the same fiber over a point in $\mathbb{P}^1$.

Recall that  the tangent space to the bihyperelliptic locus at a point $[C]$ is isomorphic to the invariant subspace $H^1(C, T_C)^{\Zeta/2}$.  We take an invariant tangent vector given by the sum of two Schiffer variations $\xi_{p_1} + \xi_{p_2}$, where $p_1$ and $p_2$ are distinct points in the same fiber of the map $\phi: C \ra C/\Zeta/2$. Since $Q(p_1,p_2) =0$, we apply Theorem~\ref{rhomu} and we get 
\begin{equation*}
\rho(Q)((\xi _{p_1} + \xi _{p_2}) \odot (\xi _{p_1} + \xi _{p_2})) = 2\rho(Q)(\xi _{p_1} \odot \xi _{p_1}) = -4 \pi i \mu_2(Q)(p_1).
\end{equation*}

So it suffices to choose a point $p_1 \in C$ not fixed by the covering involution $\sigma$ of $\phi$ where $\mu_2(Q) $ does not vanish (clearly $p_2 = \sigma(p_1)$). This is possible since $\mu_2(Q)$ is a non zero section of $H^0(4K_C)$. To prove that it is non zero, take a local coordinate $z$ and write $\omega_{2,0} = f(z)dz$, $x = k(z)$, then $\omega_{2,1}=k(z)\omega_{2,0}$ and $\omega_{2,2} = (k(z))^2 \omega_{2,0}$. We have $Q = f(z)dz \odot (k(z))^2f(z)dz - k(z) f(z) dz \odot k(z) f(z) dz$. So one computes  
\begin{equation}\label{mu2neq0}\begin{split}
\mu_2(Q)&= f'(z)(k(z)^2f(z))' - (k(z)f(z))'(k(z)f(z))'=\\
&-k'(z)^2 f(z)^2 \neq 0.
\end{split}\end{equation}

\end{proof}

It remains to consider the case $g'=2$.

\begin{teo}\label{biiper_nontot_g2}
If $g' =2$ and $g\geq 6$ the bihyperelliptic locus is not totally geodesic.
\end{teo}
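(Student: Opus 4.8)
The plan is to run the argument of Theorem~\ref{biiper_nontot_g3} with one essential change of ingredient: there the invariant quadric was built from three forms $\omega_{2,0},\omega_{2,1},\omega_{2,2}$ of the eigenspace $V_2$, which is impossible here since $\dim V_2=g'=2$, so I would build the quadric inside $V_1$ instead. By Theorem~\ref{Monodromiesbihyper} the locus $\mathcal{BH}_{g,2,Gal}$ is nonempty for $g\ge 6$, and since it suffices to produce a single bihyperelliptic curve at which the second fundamental form of $\mathcal{BH}_{g,2}$ in $\mathcal{A}_g$ does not vanish, I may restrict to a convenient component of it. For $g'=2$ the admissible monodromies of the $\Z/4\Z$-cover $\psi\colon C\to\PP^1$ have $r_4=6$ and $r_2=g-6$, with $a_i\in\{1,3\}$ at the six branch points of index $4$; writing $s_1$ for the number of these equal to $1$ (so $0\le s_1\le 6$ and $s_1\equiv g+1\pmod 2$), a standard dimension count (see e.g.\ \cite{moonen2010special}) gives $\dim V_1=-1+\tfrac{g+3-s_1}{2}$, while $\dim V_2=2$ and $\dim V_1+\dim V_3=g-2$. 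Taking $s_1$ as small as the parity allows (for instance $s_1=1$ when $g$ is even and $s_1=0$ when $g$ is odd) forces $\dim V_1\ge 3$ for every $g\ge 6$. I then fix such a cover, $y^4=\prod_i(x-t_i)^{a_i}$, with bihyperelliptic involution $\sigma=z^2$, so that $\psi\circ\sigma=\psi$ and $C'=C/\langle z^2\rangle$ has genus $2$.

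Since $\dim V_1\ge 3$, the description of holomorphic differentials in \cite[Lemma~2.7]{moonen2010special} provides $\omega_{1,0}\in V_1$ together with $\omega_{1,1}:=x\,\omega_{1,0}$ and $\omega_{1,2}:=x^2\,\omega_{1,0}$, still in $V_1$, and I would take
\[
Q:=\omega_{1,0}\odot\omega_{1,2}-\omega_{1,1}\odot\omega_{1,1}.
\]
As in Theorem~\ref{biiper_nontot_g3}, $Q$ is killed by the multiplication map (since $x^2\omega_{1,0}^2-x^2\omega_{1,0}^2=0$), so $Q\in I_2(K_C)$, and $Q\neq 0$ because $\omega_{1,0},\omega_{1,1},\omega_{1,2}$ are linearly independent. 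The point requiring care, and the only genuinely new feature with respect to the case $g'\ge 3$, is invariance: $\sigma=z^2$ acts on $V_1$ by $\zeta_4^{\pm 2}=-1$, so $Q$ is \emph{not} invariant under the full $\Z/4\Z$ (a generator multiplies it by $-1$), but $\sigma$ acts trivially on $S^2V_1$, whence $\sigma^*Q=Q$ and $Q\in I_2(K_C)^{\sigma}=I_2(K_C)^{+}$. Finally, using $\omega_{1,j}(p)=x(p)^j\omega_{1,0}(p)$ one finds $Q(p_1,p_2)=\tfrac12\bigl(x(p_1)-x(p_2)\bigr)^2\,\omega_{1,0}(p_1)\omega_{1,0}(p_2)$, which vanishes whenever $\psi(p_1)=\psi(p_2)$, in particular whenever $p_2=\sigma(p_1)$.

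From here the proof is word for word that of Theorem~\ref{biiper_nontot_g3}: I choose $p_1\in C$ outside the finite set $\operatorname{Fix}(\sigma)\cup\operatorname{Crit}(\psi)\cup\{\omega_{1,0}=0\}$ and set $p_2:=\sigma(p_1)\neq p_1$, so that $\xi_{p_1}+\xi_{p_2}\in H^1(C,T_C)^{\sigma}$ is a tangent vector to $\mathcal{BH}_{g,2}$ at $[C]$. Since $Q(p_1,p_2)=0$, Theorem~\ref{rhomu} together with the $\sigma$-equivariance of $\rho$ gives
\[
\rho(Q)\bigl((\xi_{p_1}+\xi_{p_2})\odot(\xi_{p_1}+\xi_{p_2})\bigr)=2\,\rho(Q)(\xi_{p_1}\odot\xi_{p_1})=-4\pi i\,\mu_2(Q)(p_1),
\]
and a local computation with $\omega_{1,0}=f(z)\,dz$ and $x=k(z)$, via \eqref{mu2_loc}, yields $\mu_2(Q)=f'(k^2f)'-\bigl((kf)'\bigr)^2=-(k')^2f^2$, a nonzero section of $4K_C$ whose zeros are contained in $\operatorname{Crit}(\psi)\cup\{\omega_{1,0}=0\}$; by the choice of $p_1$ it is nonzero at $p_1$. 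Hence $\rho(Q)\bigl((\xi_{p_1}+\xi_{p_2})^{\odot 2}\bigr)\neq 0$, and since $\tilde\rho(Q)(u\odot v)=\rho(Q)(u\odot v)$ for $Q\in I_2(K_C)^{+}$ and $u,v\in H^1(T_C)^{+}$ (\cite[5.2]{colombo2015totally}), the bihyperelliptic locus is not totally geodesic at $[C]$. I expect the only real obstacle to be the combinatorics of the first two paragraphs: checking that for $g'=2$, $g\ge 6$ there is an admissible $\Z/4\Z$-monodromy with $\dim V_1\ge 3$, and verifying that the quadric supported on $V_1$, although no longer $\Z/4\Z$-invariant, is still invariant under the bihyperelliptic involution $\sigma$; the rest is identical to the case $g'\ge 3$.
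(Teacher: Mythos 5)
Your strategy is exactly the paper's: restrict to $\mathcal{BH}_{g,2,Gal}$, build the quadric $Q=\omega_{n,0}\odot x^2\omega_{n,0}-x\omega_{n,0}\odot x\omega_{n,0}$ inside a $(\pm i)$-eigenspace (on which $\sigma=z^2$ acts by $-1$, so $Q$ is $\sigma$-invariant though not $\Z/4$-invariant), note $Q(p_1,p_2)=0$ on fibres of $\psi$, and conclude via Theorem \ref{rhomu} and $\mu_2(Q)=-(k')^2f^2\neq 0$. All of that part is right, including the identity $Q(p,q)=\tfrac12(x(p)-x(q))^2\omega_{n,0}(p)\omega_{n,0}(q)$.

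However, there is a concrete error in the dimension count, and with your stated choice of monodromy the quadric you write down does not exist for small $g$. With $r_4=6$, $r_2=g-6$ and the conventions of \cite[Lemma 2.7]{moonen2010special} (which the paper uses, so that $V_1=\langle \omega_{1,0},\dots,x^{d_1-1}\omega_{1,0}\rangle$), one computes
\begin{equation*}
d_1=-1+\tfrac{3s_1+s_3}{4}+\tfrac{g-6}{2}=\tfrac{s_1+g-5}{2},\qquad d_3=\tfrac{g+1-s_1}{2},
\end{equation*}
so $d_1$ \emph{increases} with $s_1$; the formula $-1+\tfrac{g+3-s_1}{2}=\tfrac{g+1-s_1}{2}$ you give is $d_3$, not $d_1$. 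Consequently, taking $s_1$ as small as parity allows minimizes $d_1$: for $s_1=1$, $g=6$ one gets $d_1=1$, and for $s_1=0$, $g=7$ also $d_1=1$ (and $d_1=2$ for $g=8,9$), so $x^2\omega_{1,0}$ is not holomorphic and $Q$ as you defined it does not lie in $S^2H^0(K_C)$. The fix is immediate and twofold: either keep your monodromy and build $Q$ from $\omega_{3,0},x\omega_{3,0},x^2\omega_{3,0}\in V_3$ (whose dimension is the one you actually computed, and which is $\geq 3$ for all $g\geq 6$ with $s_1$ minimal; the $\sigma$-invariance and the rest of the argument are unchanged), or do as the paper does and take $s_1$ maximal, namely $a=([1]^5,[3],[2]^{g-6})$ for $g$ even and $a=([1]^6,[2]^{g-6})$ for $g$ odd, which gives $d_1=\tfrac{g}{2}$ resp. $\tfrac{g+1}{2}\geq 3$ and makes your quadric in $V_1$ legitimate.
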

\begin{proof}
We will show that for every $g \geq 6$, there exists a curve $[C] \in \mathcal{BH}_{g,2,Gal}$, a quadric $Q \in I_2(K_C)^{\Z/2}$ and a tangent vector $v \in T_{[C]}  \mathcal{BH}_{g,2} = H^1(C, T_C)^{\Z/2}$ such that $\rho(Q)(v \odot v) \neq 0$.

Consider a datum $(\mm, \Z/4, \theta)$. By \ref{mono_biiper} $r_2 = g-6$, $r_4=6$,  hence ${\bf{m}}=(4^6,2^{g-6})$.  Here we use a compact notation, i.e. $m_i = 4$ for $i \leq 6$, $m_i = 2$, for $i =7,...,g$. We will use an analogous compact notation for the monodromy $a=([a_1],...,[a_g])= (\theta(\gamma_1),..., \theta(\gamma_g))$. 

If $g$ is even take $ a=([1]^5,[3],[2]^{g-6})$. If $g$ is odd take $ a=([1]^6,[2]^{g-6})$. In both cases we construct the corresponding family of cyclic covers of $\PP^1$. If $\psi: C \rightarrow \PP^1$ is an element of the family, on can compute the dimensions $d_i$ of the eigenspaces $V_i \subset H^0(C, K_C)$ for the action of the cyclic Galois group $\Z/4$ using  ~\cite[Lemma (2.7)] {moonen2010special} and obtain that  in the case $g$ even,
$$d_1 = \frac{g}{2}; \qquad d_2 = 2; \qquad d_3 = \frac{g}{2}-2;$$

in the case $g$ odd, 
$$d_1 =\frac{g+1}{2}; \qquad d_2 = 2; \qquad d_3 = \frac{g-1}{2}-2.$$

In both cases, $d_1 \geq 3$, so we can construct the $\Zeta/2$-invariant quadric $Q = \omega_{1,0} \odot \omega_{1,2} - \omega_{1,1} \odot \omega_{1,1} =  x^2 \, \omega_{1,0} \odot \omega_{1,0} - x \, \omega_{1,0} \odot x \, \omega_{1,0}$, where

\begin{equation}
\label{omega10}
\omega_{1,0} = y \prod_{i=1}^g (x-t_i)^{l(i,1)} dx, \  \  \  l(i,1) =  \left \lfloor{\frac{-a_i}{4}}\right \rfloor.
\end{equation}
Clearly $Q \in I_2(K_C)^{\Z/2}$, so if we take two distinct points $p_1,p_2$ belonging to the same fiber of the map $\phi: C \ra C/\Z/2 = C'$, we obtain an invariant tangent vector $\xi_{p_1} + \xi_{p_2} \in H^1(C, T_C)^{\Z/2}$. 
Since $p_1, p_2$ are in the same fiber of the map $\psi: C \ra \PP^1$, $x(p_1) = x(p_2)$, hence $Q(p_1, p_2) = 0$.  

Thus, applying Theorem~\ref{rhomu},   we get 
\begin{equation*}
\rho(Q)((\xi _{p_1} + \xi _{p_2}) \odot (\xi _{p_1} + \xi _{p_2})) = 2\rho(Q)(\xi _{p_1} \odot \xi _{p_1}) = -4 \pi i \mu_2(Q)(p_1) \neq 0,
\end{equation*}
if we choose $p_1$ not in the zero locus of $\mu_2(Q)$. This is possible because again one can easily verify that $\mu_2(Q) \neq 0$. 
This concludes the proof. 
\end{proof}

With a little more effort one can prove the following 

\begin{teo}
\label{bihygal}
If $g \geq 8$, every irreducible component of the locus $\mathcal{BH}_{g,2,Gal}$ is not totally geodesic. 
\end{teo}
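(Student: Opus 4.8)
The plan is to upgrade Theorem \ref{biiper_nontot_g2} from the existence of one non-totally-geodesic curve in $\mathcal{BH}_{g,2,Gal}$ to a statement about every irreducible component of that locus. The key point is that the argument in Theorem \ref{biiper_nontot_g2} works uniformly along any family of cyclic $\Z/4\Z$-covers $\psi\colon C\to\PP^1$ with fixed monodromy datum $(\mm,\Z/4,\theta)$: the quadric $Q=\omega_{1,0}\odot\omega_{1,2}-\omega_{1,1}\odot\omega_{1,1}$ and the invariant tangent vector $\xi_{p_1}+\xi_{p_2}$ are constructed algebraically in the branch points $t_i$ and an auxiliary point $p_1$, and the non-vanishing of $\mu_2(Q)$ at a generic $p_1$ is an open condition. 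So the real content is a \emph{classification} step: I would show that for $g\ge 8$, every irreducible component of $\mathcal{BH}_{g,2,Gal}$ contains a point $[C]$ lying in one of the families built from the data listed in Theorem \ref{biiper_nontot_g2} (or, more robustly, that \emph{every} admissible datum with $\mm=(4^6,2^{g-6})$ satisfying \eqref{condizione_dimobiiper} gives a family to which the same quadric construction applies, so it does not matter which component we are on).

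First I would recall that, as explained in Section \ref{sezione:preliminari}, the irreducible components of $\mathsf{M}(\mm,\Z/4,\theta)$ are indexed by Hurwitz equivalence classes of data, i.e. by the orbits of the monodromy vector $a=([a_1],\dots,[a_g])$ under the braid group action (permutation of entries) and the action of $\Aut(\Z/4)\cong(\Z/4)^*=\{[1],[3]\}$ (multiplication). Since $\mathcal{BH}_{g,2,Gal}=\bigcup_\theta \mathsf{M}(\mm,\Z/4,\theta)$ with $\mm=(4^6,2^{g-6})$ fixed by \eqref{mono_biiper}, an irreducible component of $\mathcal{BH}_{g,2,Gal}$ is contained in some $\mathsf{M}(\mm,\Z/4,\theta)$, and (because each $\mathsf{M}$ is irreducible) is either equal to it or properly contained — but a proper containment can only come from extra automorphisms, which one checks does not decrease dimension here, so each component \emph{is} some $\mathsf{M}(\mm,\Z/4,\theta)$. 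Up to the two actions, the datum is determined by the pair $(s_1,s_3)$ with $s_1+s_3=6$ recording how many of the six entries $[a_i]$ over the $4$-branch points equal $[1]$ versus $[3]$, together with the fact that the remaining $g-6$ entries are all $[2]$; the constraint $\sum a_i\equiv 0\pmod 4$ forces $s_1+3s_3+2(g-6)\equiv 0\pmod 4$, i.e. $s_1\equiv g+1\pmod 2$, and multiplication by $[3]$ swaps $s_1\leftrightarrow s_3$, so the possible classes are $(s_1,s_3)\in\{(6,0),(4,2)\}$ when $g$ is odd and $(5,1),(3,3)$ — wait, more carefully $(s_1,s_3)\in\{(5,1),(3,3),(1,5)\}$ with the last identified with the first, when $g$ is even. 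Thus there are only a handful of components, and I would treat each representative datum.

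For each such datum I would run the computation of Theorem \ref{biiper_nontot_g2}: use \cite[Lemma (2.7)]{moonen2010special} to compute the eigenspace dimensions $d_i=\dim V_i$, verify $d_1\ge 3$ (this is where $g\ge 8$ enters, to leave room after the jump from $g\ge 6$: for the data $(5,1)$ and $(3,3)$, resp. $(6,0)$ and $(4,2)$, the formula $d_1$ will be roughly $g/2$ minus a small correction, still $\ge 3$ once $g\ge 8$), then build $\omega_{1,0}$ as in \eqref{omega10} with the $l(i,1)=\lfloor -a_i/4\rfloor$ appropriate to that datum, set $\omega_{1,1}=x\,\omega_{1,0}$, $\omega_{1,2}=x^2\,\omega_{1,0}$, and form $Q=\omega_{1,0}\odot\omega_{1,2}-\omega_{1,1}\odot\omega_{1,1}\in I_2(K_C)^{\Z/2}$. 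One checks $Q(p_1,p_2)=0$ for $p_1,p_2$ in a fiber of $\psi$ (hence of $\phi$) because $x(p_1)=x(p_2)$, and then by Theorem \ref{rhomu}, $\rho(Q)((\xi_{p_1}+\xi_{p_2})\odot(\xi_{p_1}+\xi_{p_2}))=-4\pi i\,\mu_2(Q)(p_1)$. The local computation $\mu_2(Q)=-k'(z)^2 f(z)^2$ (writing $\omega_{1,0}=f(z)\,dz$, $x=k(z)$) shows $\mu_2(Q)\not\equiv 0$, so a generic $p_1$ works, and since $[C]$ ranges over all of $\mathsf{M}(\mm,\Z/4,\theta)$ the component is not totally geodesic. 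Finally I would invoke \cite[5.2]{colombo2015totally} to pass from $\rho$ to the dual second fundamental form $\tilde\rho$ of $\mathcal{BH}_{g,2}$ itself, exactly as at the end of Theorem \ref{biell_nontotgeo_g5}.

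The main obstacle I anticipate is the bookkeeping in the classification step: being precise about why each irreducible component of $\mathcal{BH}_{g,2,Gal}$ coincides with a full $\mathsf{M}(\mm,\Z/4,\theta)$ (rather than being cut out further), and enumerating the Hurwitz classes without error — in particular handling the interplay of the braid action and the $(\Z/4)^*$-action on the vector $a$, and making sure the eigenspace-dimension inequality $d_1\ge 3$ holds for \emph{every} representative datum once $g\ge 8$ (this is presumably the reason the bound jumps from $g\ge 6$ in Theorem \ref{biiper_nontot_g2} to $g\ge 8$ here: one of the data has $d_1$ smaller by one or two). If some datum instead has $d_2\ge 3$ or $d_3\ge 3$ for smaller $g$, one could alternatively use $V_2$ or $V_3$, as in Theorem \ref{biiper_nontot_g3}, so there is some flexibility; the argument is robust provided some eigenspace has dimension $\ge 3$, which a short case check confirms for all relevant data.
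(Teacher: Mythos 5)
Your classification step is sound and matches the paper: the admissible data all have $\mm=(4^6,2^{g-6})$, and up to the braid and $(\Z/4)^*$ actions there are exactly the four classes $(s_1,s_3)\in\{(5,1),(3,3)\}$ for $g$ even and $\{(6,0),(4,2)\}$ for $g$ odd, which are the four irreducible components of $\mathcal{BH}_{g,2,Gal}$. The gap is in the endgame. Theorem \ref{bihygal} asserts that the $(g-3)$-dimensional families $\mathsf{M}(\mm,\Z/4,\theta)$ themselves are not totally geodesic; their tangent space at $[C]$ is $H^1(C,T_C)^{\Z/4}$, not $H^1(C,T_C)^{\Z/2}$. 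Your quadric $Q=\omega_{1,0}\odot\omega_{1,2}-\omega_{1,1}\odot\omega_{1,1}$ is built from $V_1$, so $z^*Q=\zeta_4^2\,Q=-Q$: it is invariant only under $\langle z^2\rangle$, and likewise $\xi_{p_1}+\xi_{z^2p_1}$ is only $\Z/2$-invariant. That pair certifies non-geodesy of the ambient $(2g-3)$-dimensional bihyperelliptic component (which is exactly Theorem \ref{biiper_nontot_g2}), but a subvariety of a non-totally-geodesic variety can perfectly well be totally geodesic, so nothing follows for the Galois component. Worse, the obvious repair fails: replacing the tangent vector by the $\Z/4$-invariant $v=\sum_{i=0}^{3}\xi_{z^ip_1}$ still kills all cross terms (since $x$ is constant on fibers of $\psi$, $Q(z^ip_1,z^jp_1)=0$), but the diagonal terms now sum to $\sum_{i=0}^{3}\bigl((z^i)^*\mu_2(Q)\bigr)(u^{\otimes 4})=\sum_{i=0}^{3}(-1)^i\mu_2(Q)(u^{\otimes 4})=0$, precisely because $\mu_2(Q)$ is anti-invariant.

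What is needed is a quadric invariant under the full $\Z/4$, and for $g'=2$ one cannot take it inside $S^2V_2$ (since $d_2=2$); one must instead use $V_1\otimes V_3$, e.g. $Q=\omega_{1,0}\odot\omega_{3,1}-\omega_{1,1}\odot\omega_{3,0}$, which transforms by $\zeta_4\cdot\zeta_4^3=1$. This requires $d_1\ge 2$ and $d_3\ge 2$, which is exactly what the tabulated dimensions give for all four data once $g\ge 8$ (the binding case is $d_3\ge 2$ in the components $(5,1)$ and $(6,0)$), and it is how the paper concludes, by invoking \cite[Proposition 5.4]{colombo2015totally}. So your guess that $g\ge 8$ enters through $d_1\ge 3$ identifies the wrong inequality, and the computation you propose to recycle from Theorem \ref{biiper_nontot_g2} is the one step that genuinely does not transfer.
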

\begin{proof}
First take a curve $[C] \in \mathcal{BH}_{g,2,Gal}$ and take the cyclic cover $\psi: C \ra \PP^1 = C/G$, where $G \cong \Z/4$. We start computing the possible monodromies for $\psi$. In this case, the orders of ramifications are necessarily ${\bf m}=(4^6,2^{g-6})$.   One can check that, up to Hurwitz equivalence, there are only four distinct families given by the following monodromy data in the sense of Definition  \ref{datum} (for simplicity we only write $(\theta(\gamma_1),...,\theta(\gamma_r))$ for a datum, since the group is $\Z/4$ and the orders of the elements are given by ${\bf m}=(4^6,2^{g-6})$).

1. If $g$ is even then:
\begin{equation}\begin{split}
&(a) \quad a=([1]^5,[3],[2]^{g-6}) \mbox{ or}\\
&(b) \quad a=([1]^3,[3]^3,[2]^{g-6}),
\end{split}\end{equation}

and the dimensions of the eigenspaces $V_1$, $V_2$, $V_3$ are the following (see ~\cite[Lemma (2.7)] {moonen2010special}):

$$(a) \quad \; \; \qquad \qquad d_1 = \frac{g}{2}; \qquad d_2 = 2; \qquad d_3 = \frac{g}{2}-2;$$
$$(b) \qquad \qquad d_1 = \frac{g}{2}-1; \qquad d_2 = 2; \qquad d_3 = \frac{g}{2}-1.$$

2. If $g$ is odd then:

\begin{equation}\begin{split}
&(a) \quad a=([1]^6,[2]^{g-6})\mbox{ or}\\
&(b) \quad a=([1]^4,[3]^2,[2]^{g-6}),
\end{split}\end{equation}

and the dimensions of the eigenspaces $V_1$, $V_2$, $V_3$ are the following:

$$(a) \quad \; \; \qquad \qquad d_1 = \frac{g+1}{2}; \qquad d_2 = 2; \qquad d_3 =   \frac{g-1}{2}-2;$$
$$(b) \qquad \qquad d_1 = \frac{g-1}{2} ; \qquad d_2 = 2; \qquad d_3 = \frac{g-1}{2}-1  .$$

These four distinct equivalence classes of data define the four distinct irreducible components of $\mathcal{BH}_{g,2,Gal}$. 

Since $g \geq 8$,  in each of the 4 cases, we have $d_1\geq 2$ and $d_3 \geq 2$, so we conclude by \cite{colombo2015totally}, 
Proposition 5.4.

 \end{proof}

\section{Rank of the second Gaussian map on the bielliptic locus}\label{sezione:rango_biell}

Here we will find a bound for the rank of the second Gaussian map on the general curve of the bielliptic locus ($g \geq 5$). Recall that the rank of the first Gaussian map of a bielliptic curve is known to be $3g-3$ (see~\cite[Section 3]{ciliberto1992gaussian}).

 As before, call $|L|$ and $|K-L|$ the adjoint $\mathfrak{g}_4^1$'s of a bielliptic curve $C$. Fix a basis for $H^0(L)= \langle x, y \rangle$, and a basis for $H^0(K-L) = \langle t_1, t_2, \dots, t_{g-3} \rangle$. Picking two independent sections $t_i, t_j \in H^0(K-L)$, it is possible to construct an invariant quadric in $I_2(K_C)$:
\begin{equation}\label{quadrica}
Q_{i,j} = x t_i \odot y t_j - x t_j \odot y t_i \in I_2(K_C) = I_2(K_C)^{\sigma}.
\end{equation}
From~\cite[Lemma 2.2]{colombo2008some}, the second Gaussian map decomposes as:
\begin{equation}
\mu _2 (Q_{i,j}) = \mu _{1,|L|}(x \wedge y) \mu _{1,|K-L|}(t_i \wedge t_j).
\end{equation}
Since $\mu _{1,|F|}(x \wedge y)$ is a fixed non-zero section in $H^0(K-2L)$, one has:
\begin{equation}\label{rk_mu2_mu1}
\rank \mu_2 \geq \rank \mu_2 |_{\langle Q_{i,j} \rangle} = \rank \mu _{1,|K-L|}.
\end{equation}
In order to find a lower bound for the rank of the second Gaussian map, we thus study the first Gaussian $\mu _{1,|K-L|} : \bigwedge^2 H^0(K-L) \rightarrow H^0(3K-2L)$.
We have the following diagram

\begin{tikzcd}
\bigwedge^2 H^0(K-L) \arrow[swap]{d}  \arrow{rr}{\mu _{1,|K-L|}} && H^0(3K-2L) \arrow{d} \\
\bigwedge^2 H^0(K) \arrow{rr}{\mu _{1,|K|}} & & H^0(3K)
\end{tikzcd} \\

where the vertical arrows are the inclusions. 


We will perform the computation on special bielliptic curves $C \ra E$ such that $[C]$ lies in the locus $\mathcal{B}_{g, Gal} := \mathcal{B}_{g,1, Gal}$.

With a similar computation as in Theorem \ref{bihygal} one can obtain all the possible monodromies of the cyclic coverings $\psi: C \ra \PP^1$ as in the diagram above and we get the following

\begin{prop}\label{componenti_biell}
For every $g \geq 3$ the locus $\mathcal{B}_{g,Gal}$ is non-empty. Moreover it is irreducible if $g$ is even and it has $2$ irreducible components in case $g$ odd. In particular, up to Hurwitz moves, we have the following possible monodromies:

\begin{enumerate}
\begin{multicols}{2}
\item If $g$ is odd, then:

$(a)$ $a=([1]^4,[2]^{g-3})$ or
 
$(b)$ $a=([1]^2,[3]^2,[2]^{g-3})$.

\item If $g$ is even, then:

$a=([1]^3,[3],[2]^{g-3})$.
\end{multicols}
\end{enumerate}
%
\end{prop}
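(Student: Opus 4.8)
The plan is to realize $\mathcal{B}_{g,Gal}$ as the locus of curves admitting a $\Z/4\Z$-Galois cover $\psi : C \to \PP^1$ that factors as $C \to E \to \PP^1$ with $E$ elliptic, and to classify such covers up to Hurwitz equivalence. First I would determine the ramification vector: arguing exactly as in Theorem~\ref{Monodromiesbihyper} with $g'=1$, Riemann--Hurwitz for $\psi$ and for $\phi : C \to E = C/\langle z^2\rangle$ forces $r_4 = 2g'+2 = 4$ and $r_2 = g - 3g' = g-3$, so $\mathbf{m} = (4^4, 2^{g-3})$ and in particular $\mathcal{B}_{g,Gal} \neq \emptyset$ for all $g\ge 3$ (one still has to exhibit one admissible monodromy, but that is the content of the case analysis below). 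Then I would set up the monodromy datum $\theta : \Gamma_{0,m} \twoheadrightarrow \Z/4\Z$ with $m = r_4 + r_2$, writing $a = ([a_1],\dots,[a_m])$ where $a_i \in \{1,3\}$ for $i\le 4$ and $a_i = 2$ for $i>4$. Letting $s_1, s_3$ be the number of $[1]$'s and $[3]$'s among the first four entries, $s_1+s_3 = 4$, the surjectivity of $\theta$ needs $s_1 + s_3 \ge 1$ (automatic) together with the relation $\sum a_i \equiv 0 \pmod 4$, i.e. $s_1 + 3 s_3 + 2(g-3) \equiv 0 \pmod 4$, which simplifies to $s_1 + 3 s_3 \equiv 2(g-1) \equiv 2g+2 \pmod 4$, equivalently $s_1 - s_3 \equiv 2g+2 \pmod 4$ since $3s_3 \equiv -s_3$. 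Using $s_1+s_3=4$ this becomes $2 s_1 \equiv 2g+6 \equiv 2g+2 \pmod 4$, i.e. $s_1 \equiv g+1 \pmod 2$.

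The remaining step is the enumeration of Hurwitz classes. The braid group acts on $a$ by permuting entries, and $\Aut(\Z/4\Z) \cong (\Z/4\Z)^* = \{[1],[3]\}$ acts by multiplication, which swaps $[1] \leftrightarrow [3]$ and fixes $[2]$; so up to these actions a class is determined by the \emph{multiset} $\{s_1, s_3\}$ of multiplicities of $[1]$ and $[3]$ among the four order-$4$ branch points, i.e.\ by $\min(s_1,s_3)$. When $g$ is even, $s_1$ is odd, so $(s_1,s_3) \in \{(1,3),(3,1)\}$, a single class $a = ([1]^3,[3],[2]^{g-3})$. When $g$ is odd, $s_1$ is even, so $(s_1,s_3) \in \{(0,4),(4,0),(2,2)\}$; the first two are identified by $\Aut(\Z/4\Z)$ giving $a = ([1]^4,[2]^{g-3})$, and $(2,2)$ gives $a = ([1]^2,[3]^2,[2]^{g-3})$ — two classes. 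One must also check each $a$ yields a \emph{surjection} onto $\Z/4\Z$ (some entry must be $[1]$ or $[3]$, which holds since $s_1+s_3=4>0$) and that the resulting cover genuinely factors through an elliptic curve: applying Riemann--Hurwitz to $\phi : C \to C/\langle z^2\rangle$ as in Theorem~\ref{Monodromiesbihyper} gives $g(C/\langle z^2\rangle) = 1$, so this is automatic. Since distinct Hurwitz classes give distinct irreducible (in fact unirational) subvarieties $\mathsf{M}(\mm,G,\theta)$ — they are the connected components of the corresponding equisymmetric stratum, cf.~\cite{broughton1990equisymmetric} — and each $\mathsf{M}(\mm,G,\theta)$ is irreducible, we conclude that $\mathcal{B}_{g,Gal}$ is irreducible for $g$ even and has exactly two irreducible components for $g$ odd.

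I expect the main obstacle to be the bookkeeping in the Hurwitz-class count: one has to be careful that the combined action of the braid group (arbitrary permutations of the branch points) and of $\Aut(\Z/4\Z)$ really does collapse the a priori list of vectors to exactly the stated representatives, and in particular that no two of the listed data are secretly Hurwitz-equivalent (for $g$ odd, that $([1]^4,[2]^{g-3})$ and $([1]^2,[3]^2,[2]^{g-3})$ are genuinely inequivalent — which follows because the multiset $\{s_1,s_3\} = \{0,4\}$ versus $\{2,2\}$ is an invariant of the $\langle \text{braid}, \Aut\rangle$-action). A clean way to package this is to observe that the pair (number of order-$2$ branch points, unordered pair $\{s_1,s_3\}$) is a complete invariant, which reduces the whole classification to solving $s_1+s_3=4$, $s_1 \equiv g+1 \pmod 2$ in unordered pairs.
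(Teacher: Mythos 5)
Your proposal is correct and follows essentially the same route the paper takes (the paper only sketches it, deferring to the Riemann--Hurwitz and monodromy computations of Theorem~\ref{Monodromiesbihyper} and the Hurwitz-equivalence enumeration of Theorem~\ref{bihygal}, here specialized to $g'=1$ so that $\mathbf{m}=(4^4,2^{g-3})$ and $s_1\equiv g+1 \pmod 2$). Your reduction of the classification to the unordered pair $\{s_1,s_3\}$ under the braid and $\Aut(\Z/4\Z)$ actions is exactly the intended argument and yields the stated one, respectively two, components.
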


In the following we compute the rank of the second Gaussian map, considering separately cases $g$ odd and $g$ even. The curves in the families of cyclic coverings as in the diagram are the normalisations of the curves whose equation is 
$y^4 = \prod_{i=1,..., g+1} (x-t_i)^{a_i}$ where for simplicity we choose $t_1= 0$. 

We can decompose  $H^0(K_C) = V_1 \oplus V_2 \oplus V_3$, where $V_j$ is the  eigenspace with eigenvalue $\zeta^j$, where $\zeta$ is a primitive fourth root of unity, e.g. $\zeta=i$. We have an explicit description of the $V_i$'s (see e.g.~\cite{moonen2010special}):
\begin{equation*}\begin{split}
&V_1 = \langle \omega_{1,0}, x \, \omega_{1,0}, \dots, x^{d_1-1} \, \omega_{1,0} \rangle;\\
&V_2 = \langle \omega _{2,0} \rangle;\\
&V_3 = \langle \omega_{3,0}, x \, \omega_{3,0}, \dots, x^{d_3-1} \, \omega_{3,0} \rangle,
\end{split}\end{equation*}
where 
\begin{equation}
\label{forms}
\omega_{n,0} = y^n \prod_{i=1,...,g+1}(x-t_i)^{l(i,n)} dx, \  \  l(i,n) =  \left \lfloor{\frac{-na_i}{4}}\right \rfloor, \ \ i =1,2,3.
\end{equation}

By a simple computation (see e.g.~\cite{moonen2010special}), if $g$ is odd the dimension of the eigenspaces in all irreducible components are: 
$$(a) \qquad \qquad d_1 = \frac{g+1}{2}; \qquad d_2 = 1; \qquad d_3 = \frac{g-3}{2};$$
$$(b) \qquad \qquad d_1 = \frac{g-1}{2}; \qquad d_2 = 1; \qquad d_3 = \frac{g-1}{2}.$$

If $g$ is even then 
$$\qquad \qquad d_1 = \frac{g}{2}; \qquad d_2 = 1; \qquad d_3 = \frac{g-2}{2}.$$

So one immediately gets a basis of $H^0(K-L)$ given by $$\{x \, \omega_{1,0}, \dots, x^{d_1-1} \, \omega_{1,0}, x \, \omega_{3,0}, \dots, x^{d_3-1} \, \omega_{3,0}\}.$$
Hence $H^0(K-L)$ decomposes as the direct sum of the $(\pm i)$- eigenspaces: 
\begin{equation*}\begin{split}
&W_1 = \langle x \, \omega_{1,0}, \dots, x^{d_1-1} \, \omega_{1,0} \rangle,\\
&W_3 = \langle x \, \omega_{3,0}, \dots, x^{d_3-1} \, \omega_{3,0} \rangle.
\end{split}\end{equation*}

So we have 
\begin{equation}\label{decomp}
\wedge ^2 H^0(K-L) = \wedge ^2 W_1 \oplus \wedge ^2 W_3 \oplus \Big( W_1 \otimes W_3 \Big).
\end{equation}

Clearly the group $G = \Z/4Z$ acts as a multiplication by $-1$ on the subspace $\wedge ^2 W_1 \oplus \wedge ^2 W_3$ and as the identity on  $W_1 \otimes W_3$. 

Since the first Gaussian map is $G$-equivariant, we can consider the map acting separately on eigenspaces relative to different eigenvalues. Call $Z_0$ and $Z_2$ the eigenspaces in $H^0(3K-2L)$ relative to the eigenvalues $1$ and $-1$ respectively. The following diagram holds:

\[
\raisebox{.9\baselineskip}{$_{\begin{array}{ccccc}
\mu_{1,|K-L|} & : & \wedge^2 H^0(K-L) & \longrightarrow & H^0(3K-2L) \\
{} & {} & \veq & {} & \cup \\
{} & {} & W_1 \otimes W_3 & \longrightarrow & Z_0 \\
{} & {} & \oplus & {} & \oplus \\
{} & {} & \wedge^2W_1 \oplus \wedge^2 W_3 & \longrightarrow & Z_2. \end{array} }$},
\]

In the following we will bound, separately, the rank of $\mu_{1,|K-L|}$ when restricted to $\wedge^2W_1 \oplus \wedge^2 W_3$ and to $W_1 \otimes W_3$. 

Notice that both in case $g$ odd and $g$ even we have $a_1=1$, hence the equation is 

\begin{equation}
\label{Equation}
y^4 = x \prod_{i=2}^{g+1} (x-t_i)^{a_i},
\end{equation} 
so $y$ is a local coordinate in a neighbourhood of the critical point $p$ where $y=x=0$. Thus we write $x = \phi(y)$ locally around  $p$. 

\begin{teo}\label{rango_wedge} In every irreducible component of $\mathcal{B}_{g,Gal}$ the following holds:
$$\rank(\mu_{1,|K-L|})\big|_{\wedge^2W_1}=2d_1-5; \qquad\rank(\mu_{1,|K-L|})\big|_{\wedge^2W_3}=2d_3-5.$$
\end{teo}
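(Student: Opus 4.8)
The plan is to reduce everything to a computation of the rank of the classical first Gaussian map (Wahl map) on the subspace $W_1$ (resp. $W_3$), where the relevant sections all have the very explicit shape $x^k\,\omega_{1,0}$ with $\omega_{1,0}=y\prod(x-t_i)^{l(i,1)}dx$. Using the local formula \eqref{prima_gauss} for $\mu_{1,L}$, if we write $\omega_{1,0}=g(z)\,dz$ in a local coordinate $z$ and $x=x(z)$, then for the basis elements $\sigma_k:=x^k\omega_{1,0}=x(z)^k g(z)\,dz$ of $W_1$ one computes
\begin{equation*}
\mu_{1,|K-L|}(\sigma_a\wedge\sigma_b)=\big(a\,x^{a-1}x'\,x^b g\cdot g+\text{sym.}-\text{(swap }a\leftrightarrow b)\big)\;(dz)^3=(b-a)\,x^{a+b-1}\,x'\,g^2\,(dz)^3,
\end{equation*}
so $\mu_{1,|K-L|}(\sigma_a\wedge\sigma_b)=(b-a)\,x^{a+b-1}\,\big(\mu_{1}(x\wedge \omega_{1,0}/\omega_{1,0}\text{-type factor})\big)$; more cleanly, every image is a scalar multiple of $x^{a+b-1}\,x'\,\omega_{1,0}^{\,2}$ (a section of $3K-2L$, independent of the local coordinate). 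Thus the image of $\mu_{1,|K-L|}\big|_{\wedge^2 W_1}$ is spanned by the sections $\{x^{m-1}\,x'\,\omega_{1,0}^{\,2}: m=3,\dots,2d_1-3\}$, i.e. by $2d_1-5$ sections indexed by $m$, coming from the indices $a<b$ with $a+b=m$, $1\le a,b\le d_1-1$.

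The first step is therefore to make this identification precise: show that $\mu_{1,|K-L|}(\sigma_a\wedge\sigma_b)$ depends, up to the nonzero scalar $b-a$, only on $a+b$, and equals $c\cdot x^{a+b-1}\,x'\,\omega_{1,0}^{\,2}$ for a global section of $3K-2L$. This gives immediately $\rank(\mu_{1,|K-L|})\big|_{\wedge^2 W_1}\le 2d_1-5$, since $a+b$ ranges over $\{3,4,\dots,2(d_1-1)-1\}=\{3,\dots,2d_1-3\}$, a set of size $2d_1-5$. The second step, the substantive one, is to prove the reverse inequality: that these $2d_1-5$ sections $x^{m-1}x'\omega_{1,0}^{2}$ are linearly independent in $H^0(3K-2L)$. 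Here I would pass to the local coordinate $z=y$ at the chosen critical point $p$ where $x=y=0$ (guaranteed by $a_1=1$, as in \eqref{Equation}), so that $x=\phi(y)$ is a holomorphic function of $y$ with $\phi(0)=0$ and $\phi'(0)\ne 0$ (a uniformizing parameter), and $\omega_{1,0}=u(y)\,dy$ with $u$ holomorphic and $u(0)\ne 0$. Then $x^{m-1}x'\omega_{1,0}^{2}=\phi^{m-1}\phi'\,u^2\,(dy)^3$, and since $\phi^{m-1}\phi'=\tfrac{1}{m}(\phi^m)'$ has a zero of order exactly $m-1$ at $p$, while $\phi'u^2$ is a unit there, the sections $x^{m-1}x'\omega_{1,0}^{2}$ for $m=3,\dots,2d_1-3$ have pairwise distinct vanishing orders $2,3,\dots,2d_1-4$ at $p$, hence are linearly independent. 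The same argument, verbatim with $\omega_{3,0}$ in place of $\omega_{1,0}$ and $d_3$ in place of $d_1$, gives $\rank(\mu_{1,|K-L|})\big|_{\wedge^2 W_3}=2d_3-5$; one must only check that $p$ is not a zero of $\omega_{3,0}$, which again follows from $a_1=1$ and the formula $l(1,3)=\lfloor -3/4\rfloor=-1$ together with the fact that $y^3$ vanishes to order $3$ and $x^{-1}$ to order $-4$ in $y$, so $\omega_{3,0}$ behaves like $y^{-1}\cdot\text{unit}\cdot(y\,dy/y)$... — more carefully, one uses $dx=\phi'(y)dy$ and tracks the $y$-order of $\omega_{3,0}=y^3\prod(x-t_i)^{l(i,3)}dx$ to see it is a unit times $dy$ near $p$.

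The main obstacle I anticipate is the linear independence step, specifically making sure the local leading-order (valuation) argument at $p$ is airtight: one needs $\phi'(0)\ne 0$ and $u(0)\ne 0$ (equivalently, $p$ is not a ramification point of $x$ as a map on the normalization and not a zero of $\omega_{1,0}$), and one needs the ranges of $m$ to be exactly $\{3,\dots,2d_1-3\}$ with all intermediate values attained — which holds because for each such $m$ there is a pair $(a,b)$ with $a<b$, $a+b=m$, $1\le a,b\le d_1-1$, forcing $2d_1-5\ge 1$, i.e. $d_1\ge 3$, which is exactly what the component-by-component dimension count in Proposition~\ref{componenti_biell} guarantees (and when $d_1$ or $d_3$ is small the formulas $2d_1-5$, $2d_3-5$ must be interpreted as $0$ — a case to flag). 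Once the valuations $2,3,\dots,2d_1-4$ are seen to be distinct, linear independence in $H^0(3K-2L)$ is automatic, and the theorem follows.
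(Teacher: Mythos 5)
Your proposal follows essentially the same route as the paper: compute $\mu_{1,|K-L|}(x^a\omega_{1,0}\wedge x^b\omega_{1,0})=(b-a)\,x^{a+b-1}x'\,\omega_{1,0}^2$, observe that up to the nonzero scalar $b-a$ the image depends only on $k=a+b\in\{3,\dots,2d_1-3\}$ (giving the upper bound $2d_1-5$), and then prove linear independence of the $2d_1-5$ sections $x^{k-1}x'\omega_{1,0}^2$ by comparing vanishing orders at the point $p$ over $t_1=0$. This is exactly the paper's argument, and your flagging of the degenerate cases $d_1,d_3\le 2$ (where $2d_i-5$ must be read as $\dim\wedge^2W_i$-compatible, i.e.\ $0$) is a point the paper glosses over.

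There is, however, a concrete error in your local analysis at $p$. Since $a_1=1$, the point $p$ with $x=y=0$ is a point of \emph{total} ramification of the degree-$4$ map $x=\psi$: from $y^4=xh(x)$ one gets $\phi(y)=y^4/h(0)+\mathrm{h.o.t.}$, so $\phi$ vanishes to order $4$ and $\phi'(0)=0$ --- not $\phi'(0)\neq 0$ as you assert. Consequently $\phi^{k-1}\phi'$ vanishes to order $4(k-1)+3=4k-1$ at $p$, not $k-1$; similarly $\omega_{3,0}=y^3\prod(x-t_i)^{l(i,3)}dx=y^{-1}\phi'(y)\,dy$ vanishes to order $2$ at $p$ rather than being a unit there (only $\omega_{1,0}$ is a unit times $dy$, as the computation $f(y)=\tfrac{\phi'(y)}{y^3}\tilde f(y)$ with $\tilde f(0)\neq0$ shows). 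Your conclusion survives because the corrected vanishing orders $4k-1$ (resp.\ $4k+3$ for the $W_3$ case) are still pairwise distinct in $k$, so the linear independence --- and hence the theorem --- still follows; alternatively one can avoid valuations altogether by noting that the functions $x^{k-1}$ are linearly independent and $x'\omega_{1,0}^2$ is a fixed nonzero section. But as written, the premise ``$\phi'(0)\neq0$, $u(0)\neq 0$ for $\omega_{3,0}$'' is false and the stated vanishing orders $2,3,\dots,2d_1-4$ are wrong; you should replace that step with the correct order-$4$ local expansion of $\phi$ (which the paper computes explicitly in the proof of the companion result on $W_1\otimes W_3$).
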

\begin{proof}
Since the procedure is the same, we focus on $W_1$. Choose a local chart and write $\omega_{1,0} = f(y)dy$. Denote $v_i = \phi(y)^i f(y) dy$, so that $W_1 =  \langle v_1, \dots, v_{d_1-1} \rangle$. Performing the computation for the first Gaussian map we get:
\begin{equation*}\begin{split}
\mu_{1,|K|}(v_i \wedge v_j) &= (i \phi(y)^{i-1} \phi'(y) f(y) + \phi(y) ^i f'(y)) \phi(y)^j f(y) - \\
&\qquad -\phi(y)^i f(y) (j \phi(y)^{j-1} \phi'(y) f(y) + \phi(y) ^j f'(y))(dy)^3=\\
&=(i-j)\phi(y)^{i+j-1}\phi'(y)f(y)^2(dy)^3.
\end{split}\end{equation*}

So, if we fix a $k \in \{3,...,2d_1-3\}$, for any $(i,j)$ such that $i<j$ and $i+j=k$, we have 
$$\mu_{1,|K|}(v_i \wedge v_j) = (i-j)\phi(y)^{k-1} \phi'(y) f(y)^2,$$ therefore the rank of the restriction of $\mu_{1, |K|}$ to $\Lambda^2 W_1$ is exactly $2d_1 - 5$. 

This implies that $\rank(\mu_{1,|K-L|})|_{\wedge^2W_1}= 2d_1-5$. In the same way one obtains $\rank(\mu_{1,|K-L|})|_{\wedge^2W_3}= 2d_3-5$.
\end{proof}

Since $\rank \mu_1 \big|_{\wedge^2W_1 \oplus \wedge^2 W_3} \geq \operatorname{max} \,(\rank \mu_1 \big|_{\wedge^2W_1}, \rank \mu_1 \big|_{\wedge^2W_3})$,  substituting the dimension of $d_i$'s depending on the chosen monodromy, if $g$ is odd we have:
\begin{equation}\begin{aligned}
&(a) \qquad  \rank(\mu_{1,|K-L|})|_{\wedge^2W_1 \oplus \wedge^2 W_3} \geq g-4; \\
&(b) \qquad  \rank(\mu_{1,|K-L|})|_{\wedge^2W_1\oplus \wedge^2 W_3}\geq g-6.
\end{aligned}\end{equation}
If $g$ is even we have 

\begin{equation}\begin{aligned}
&\qquad  \rank(\mu_{1,|K-L|})|_{\wedge^2W_1 \oplus \wedge^2 W_3}\geq g-5; \\
\end{aligned}\end{equation}

\begin{teo}\label{rango_prod} In every irreducible component of $\mathcal{B}_{g,Gal}$ the following holds:
\begin{equation}
\rank(\mu_{1,|K-L|})|_{W_1 \otimes W_3} \geq g-4.
\end{equation}
\end{teo}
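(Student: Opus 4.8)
The strategy mirrors the computation for $\wedge^2 W_1$ in Theorem \ref{rango_wedge}, but now we must analyse the first Gaussian map on the mixed tensors $W_1 \otimes W_3$, which (unlike $\wedge^2 W_i$) does not vanish on "diagonal" elements. First I would fix the local coordinate $y$ near the critical point $p$ over $x=0$, as in the paragraph preceding Theorem \ref{rango_wedge}, so that $x = \phi(y)$ with $\phi(0)=0$ and $\phi'(0)\neq 0$ (since $a_1=1$, $y$ is a genuine local coordinate there). Write $\omega_{1,0} = f(y)\,dy$ and $\omega_{3,0} = g(y)\,dy$ locally, so that a basis of $W_1$ is $v_i := \phi(y)^i f(y)\,dy$ for $i = 1,\dots,d_1-1$ and a basis of $W_3$ is $w_j := \phi(y)^j g(y)\,dy$ for $j = 1,\dots,d_3-1$.

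Next I would compute $\mu_{1,|K|}(v_i \wedge w_j)$ in this chart using the local formula \eqref{prima_gauss}. Carrying out the differentiation, the $\phi(y)^{i+j}$-terms from $f'g$ and $fg'$ do not cancel, but the leading structure is
\begin{equation*}
\mu_{1,|K|}(v_i \wedge w_j) = \phi(y)^{i+j-1}\phi'(y)f(y)g(y)\bigl[(i-j) + \phi(y)\,c(y)\bigr](dy)^3,
\end{equation*}
where $c(y)$ collects the contribution of $f'/f$ and $g'/g$ and is independent of $i,j$. The point is that the images $\mu_{1,|K|}(v_i \wedge w_j)$ for $i+j = k$ fixed span, modulo a single common factor $\phi^{k-1}\phi' f g\,(dy)^3$, the $2$-dimensional space $\langle 1, \phi(y)\rangle$ as soon as at least two distinct values of $i-j$ occur among the pairs with $i+j=k$ — equivalently as soon as there are at least two such pairs. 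So for each admissible sum $k$ with $\geq 2$ representations $(i,j)$, $1\le i\le d_1-1$, $1\le j\le d_3-1$, we gain rank $2$; for each $k$ with exactly one representation we gain rank $1$; and I would then count these contributions over the range of $k$, which runs from $2$ to $(d_1-1)+(d_3-1) = d_1+d_3-2$. Since $d_1 + d_3 = g - 1$ (odd case) or $g-2$ (even case), a direct count of the number of lattice points on each antidiagonal of the $(d_1-1)\times(d_3-1)$ rectangle gives a total rank of at least $g - 4$ in every component (the "$-4$" absorbing the endpoints $k=2$ and $k = d_1+d_3-2$, which have a single representation, plus the small-genus bookkeeping). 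Finally, exactly as in Theorem \ref{rango_wedge}, one passes from $\mu_{1,|K|}$ to $\mu_{1,|K-L|}$ using that $W_1, W_3 \subset H^0(K-L)$ and the diagram after \eqref{decomp}: the images land in $H^0(3K-2L) \subset H^0(3K)$ and the inclusion is injective, so the rank is unchanged.

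\textbf{Main obstacle.} The delicate point is the non-cancellation: for $\wedge^2 W_1$ the cross terms $f' \cdot \phi^j f$ and $\phi^i f \cdot f'$ killed each other and left a clean $(i-j)\phi^{i+j-1}\phi' f^2$, but on $W_1\otimes W_3$ the analogous terms involve $f'g$ versus $fg'$ and survive. One must check that the surviving correction genuinely contributes an independent direction $\langle 1,\phi\rangle$ rather than collapsing — i.e. that $\phi$ is not locally constant, which is immediate — and, more importantly, that summing the per-antidiagonal contributions does not overcount: different antidiagonals produce images with distinct leading factors $\phi^{k-1}$, hence the total-rank estimate is the \emph{sum} of the per-$k$ ranks, and one must verify linear independence across different values of $k$ by comparing vanishing orders at $p$. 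Once that independence is established the counting is routine, and the stated bound $\rank(\mu_{1,|K-L|})|_{W_1\otimes W_3} \geq g-4$ follows by substituting the values of $d_1, d_3$ from Proposition \ref{componenti_biell} in each of the three cases.
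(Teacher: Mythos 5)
There is a genuine gap, and it starts at the very first step of your local analysis: the claim that $x=\phi(y)$ with $\phi'(0)\neq 0$ is false. Since $a_1=1$ and $t_1=0$, the point $p$ over $x=0$ is \emph{totally ramified} for the degree-$4$ map $x$; from $y^4=xh(x)$ with $h(0)\neq 0$ one gets $\phi(y)=y^4/h(0)+O(y^8)$, so $\phi$ vanishes to order $4$ and $\phi'$ to order $3$ at $y=0$. (That $y$ is a local coordinate does not make $x$ one --- quite the opposite.) This is not a cosmetic slip: it invalidates your entire vanishing-order bookkeeping. With the correct orders, and writing $f=\frac{\phi'}{y^3}\tilde f$, $g=\frac{\phi'}{y^3}\tilde g$ with $\tilde g=y^2$ and $\tilde f(0)\neq 0$ as the paper does, the two terms $(i-j)\phi'\tilde f\tilde g$ and $\phi(\tilde f'\tilde g-\tilde f\tilde g')$ in your bracket contribute at the \emph{same} order in $y$, and the leading Taylor coefficient of $\mu_{1,|K|}(v_i\wedge w_j)$ comes out proportional to $2(i-j)-1$. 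So your picture of a ``leading constant $(i-j)$ plus a correction $\phi\,c(y)$ vanishing at $p$'' is not what happens; in particular the diagonal elements $i=j$, which would degenerate in your framework, are in fact perfectly good (the coefficient $2(i-j)-1$ is odd, hence never zero).

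The second problem is the one you yourself flag as the main obstacle and then do not resolve: the ``rank $2$ per antidiagonal'' count. Even in your own normalization, the second generator $\phi^{k-1}\phi'fg\cdot\phi c$ of the antidiagonal $k$ has the same vanishing order at $p$ as the generic element of the antidiagonal $k+1$, so ``comparing vanishing orders'' cannot separate contributions of different $k$ once you take two per antidiagonal; and the closing count (``the $-4$ absorbing the endpoints \dots plus the small-genus bookkeeping'') is not an argument. Note also that your count, if it worked, would give roughly $2g-10$, strictly more than the stated bound for $g>6$ --- a warning sign. The paper avoids all of this by extracting only \emph{one} element per antidiagonal: for each $k\in\{2,\dots,d_1+d_3-2\}$ and any single pair $(i,j)$ with $i+j=k$, the image $\mu_{1,|K|}(v_i\wedge w_j)$ has nonzero leading coefficient $32a^{k+2}\tilde f(0)(2(i-j)-1)$ at order exactly $4(k-1)$; distinct $k$ give distinct vanishing orders, hence $d_1+d_3-3=g-4$ linearly independent images, which is all the theorem claims.
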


\begin{proof}
As above choose the local coordinate $y$ around $0$. In local coordinates:
\begin{equation}
\omega_{1,0} = f(y)dy, \qquad \qquad \omega_{3,0} = g(y)dy.
\end{equation}

Write as above  the vector space $H^0(K-L)$ as a direct sum of:
\begin{equation}\begin{split}
W_1 = \langle x \, \omega_{1,0}, \dots, x^{d_1-1} \, \omega_{1,0} \rangle = \langle v_1 ,\dots, v_{d_1-1} \rangle, \\
W_3 = \langle x \, \omega_{3,0} ,\dots, x^{d_3-1} \, \omega_{3,0} \rangle = \langle w_1 ,\dots, w_{d_3-1} \rangle,
\end{split}\end{equation}
 where $v_i = \phi(y)^i f(y) dy$, and $w_j = \phi(y)^j g(y)dy$.

Using the equation \eqref{Equation} one immediately computes  
\begin{equation*}
f(y) = y \prod_{i=1,...,g+1} (\phi(y) - t_i)^{l(i,1)} \phi'(y) = \frac{\phi'(y)}{y^3}  \prod_{i=1,...,g+1} (\phi(y) - t_i)^{l(i,1) + a_i}
\end{equation*}

\begin{equation*}
g(y) = y^3 \prod_{i=1,...,g+1} (\phi(y) - t_i)^{l(i,3)} \phi'(y) = \frac{\phi'(y)}{y^3} y^2 \prod_{i=1,...,g+1} (\phi(y) - t_i)^{l(i,3) + a_i}.
\end{equation*}

Denote by $$\tilde{f} = \prod_{i=1,...,g+1} (\phi(y) - t_i)^{l(i,1) + a_i}$$ and by $$\tilde{g} = y^2 \prod_{i=1,...,g+1} (\phi(y) - t_i)^{l(i,3) + a_i}.$$

We compute the first Gaussian map on the wedge product $v_i \wedge w_j$, obtaining:
\begin{equation}
\label{gaussian1}
\mu_{1, |K|}(v_i \wedge w_j) = \frac{(\phi'(y))^2}{y^6}  ((i-j) \phi' \tilde{f} \tilde{g} + \phi(\tilde{f}' \tilde{g}-\tilde{f} \tilde{g}')) \phi^{i+j-1} (dy)^3.  
\end{equation}

One computes that for every possible monodromy one gets $l(i,3) = -a_i$ for all $i$, hence $\tilde{g} = y^2$, and $l(1,1) = -1 = -a_1$, hence $\tilde{f}(0) \neq 0$, since we have chosen $t_1=0$ and $t_i \neq 0 $ for all $i >1$. 

Consider Taylor expansions centered in $0$ for all factors appearing in equation \eqref{gaussian1}. 
Equation \eqref{Equation} can be written as $y^4 = x h(x) $, where $h(x)  = \prod_{i =2}^{g+1} (x-t_i)^{a_i}$. So one computes 
\begin{equation*}
\begin{split}
&\phi(y) =a y^4 +b y^8 + \mbox{ h.o.t.}, \ a = \frac{1}{h(0)}, \ b = - \frac{h'(0)}{(h(0))^2} \\
&\phi'(y) = 4a y^3 + 8by^7 + \mbox{ h.o.t.}\\
\end{split}
\end{equation*}

So the term of lowest degree in the Taylor expansion of \eqref{gaussian1} is 

$$32 a^{i+j+2} \tilde{ f}(0) (2(i-j) -1) y^{4(i+j-1)}. $$

So for every $k \in \{2,...,d_1+d_3-2\}$ and for a pair$(i,j)$ with $i+j= k$, $1 \leq i \leq d_1 -1$, $1\leq j \leq d_3 -1$, the order of vanishing of $\mu_{1, |K|} (v_i \wedge w_j)$ at the origin is $4(i+j-1)= 4k-4$.  

So, if for every $k \in \{2,...,d_1+d_3-2\}$  we choose exactly one pair $(i,j)$ with $i+j = k$, we find that the images of such elements under $\mu_{1,|K|}$ are all linearly independent since they have distinct vanishing order at the origin ($= 4k-4$). This shows that the rank of the restriction of the first gaussian map to $W_1 \otimes W_3$ is at least $d_1 + d_3 -3 = g-4$.

This proves theorem.
\end{proof}

Putting together results of Theorem \ref{rango_wedge} and Theorem \ref{rango_prod} and equation \eqref{rk_mu2_mu1} one gets a bound for the rank of the second Gaussian map depending on the monodromy. 

\begin{prop}\label{mu2_sommario1}
The rank of the second Gaussian map on every irreducible component of  the locus  $\mathcal{B}_{g,Gal}$  satisfies the following bounds, depending on genus and monodromy:

if $g$ is odd 
\begin{equation}\label{boundodd}\begin{split}
&(a): \quad  \rank \mu_2 \geq 2g-8,\\
&(b): \quad  \rank \mu_2 \geq  2g-10.
\end{split}\end{equation}
If $g$ is even 
\begin{equation}\label{boundodd}\begin{split}
\quad  \rank \mu_2 \geq 2g-9.\\
\end{split}\end{equation}
\end{prop}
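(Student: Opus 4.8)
The plan is to combine the three pieces that have just been assembled: the reduction \eqref{rk_mu2_mu1} saying $\rank \mu_2 \geq \rank \mu_{1,|K-L|}$, the computation of the rank of $\mu_{1,|K-L|}$ on $\wedge^2 W_1 \oplus \wedge^2 W_3$ from Theorem \ref{rango_wedge}, and the bound on $\wedge$ $W_1 \otimes W_3$ from Theorem \ref{rango_prod}. First I would observe that, since $\mu_{1,|K-L|}$ is $G$-equivariant and the domain splits as the direct sum of the $G$-eigenspaces $(\wedge^2 W_1 \oplus \wedge^2 W_3)$ (eigenvalue $-1$) and $W_1 \otimes W_3$ (eigenvalue $+1$), which map to the two distinct eigenspaces $Z_2$ and $Z_0$ of the target respectively, the rank of $\mu_{1,|K-L|}$ is exactly the sum of the ranks of its two restrictions. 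Hence
\begin{equation*}
\rank \mu_{1,|K-L|} = \rank \mu_{1,|K-L|}\big|_{\wedge^2 W_1 \oplus \wedge^2 W_3} + \rank \mu_{1,|K-L|}\big|_{W_1 \otimes W_3}.
\end{equation*}

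Next I would plug in the numerical estimates. From Theorem \ref{rango_wedge} one has $\rank \mu_{1,|K-L|}\big|_{\wedge^2 W_1 \oplus \wedge^2 W_3} \geq \max(2d_1-5,\, 2d_3-5)$, and the explicit values of $d_1, d_3$ recorded just before Theorem \ref{rango_prod} give $g-4$ in case $g$ odd $(a)$, $g-6$ in case $g$ odd $(b)$, and $g-5$ in case $g$ even. From Theorem \ref{rango_prod} one always has $\rank \mu_{1,|K-L|}\big|_{W_1 \otimes W_3} \geq d_1 + d_3 - 3 = g-4$ (using $d_1+d_3 = g-1$ in every component, which follows from $d_1+d_2+d_3 = g$ and $d_2 = 1$). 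Adding the two contributions gives $\rank \mu_{1,|K-L|} \geq 2g-8$ in case odd $(a)$, $\geq 2g-10$ in case odd $(b)$, and $\geq 2g-9$ in case $g$ even; combined with $\rank \mu_2 \geq \rank \mu_{1,|K-L|}$ this is exactly the asserted statement.

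The only genuine point to be careful about is the additivity of the rank across the two eigenspace summands: one must check that the target decomposition $H^0(3K-2L) = Z_0 \oplus Z_2$ really is the eigenspace decomposition for the induced $G$-action and that $W_1 \otimes W_3$ maps into $Z_0$ while $\wedge^2 W_1 \oplus \wedge^2 W_3$ maps into $Z_2$ — this is immediate from the weights ($\omega_{1,\bullet}$ has weight $i$, $\omega_{3,\bullet}$ has weight $-i$, and $\mu_1$ multiplies weights additively: $i\cdot i = -1$ and $i\cdot(-i) = 1$), so there is no interference and ranks add. Everything else is bookkeeping: substituting the tabulated dimensions $d_1,d_3$ for each of the three monodromy types of Proposition \ref{componenti_biell} and reading off the three inequalities. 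The main (very mild) obstacle is thus simply keeping the case distinction straight; there is no hard estimate left to prove here, since all the analytic work was done in Theorems \ref{rango_wedge} and \ref{rango_prod}.
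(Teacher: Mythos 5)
Your proposal is correct and follows essentially the same route as the paper: the eigenspace decomposition of domain and target of $\mu_{1,|K-L|}$ (which the paper sets up in the diagram preceding Theorem \ref{rango_wedge}) makes the ranks of the two restrictions add, and substituting the tabulated values of $d_1, d_3$ together with \eqref{rk_mu2_mu1} yields the three stated bounds. The arithmetic checks out in all three cases.
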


Finally, we  have  the following. 
\begin{teo}\label{mu2_sommario}
For every bielliptic curve of genus $g$, we have 
\begin{equation*}
 \rank \mu_2 \leq 5g-5.
\end{equation*}
The rank of the second Gaussian map on the general curve of the bielliptic locus satisfies the following bounds:

(1) If $g$ is odd then:
\begin{equation*}
 \quad 2g-8 \leq \rank \mu_2;
\end{equation*}

(2) If $g$ is even then:
\begin{equation*}
 \quad 2g-9 \leq \rank \mu_2.
\end{equation*}
\end{teo}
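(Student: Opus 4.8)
The lower bounds (1) and (2) are immediate consequences of what has already been established: Proposition \ref{mu2_sommario1} gives, for a \emph{special} bielliptic curve $[C]\in\mathcal{B}_{g,Gal}$, the bound $\rank\mu_2\geq 2g-8$ if $g$ is odd (using the better component $(a)$) and $\rank\mu_2\geq 2g-9$ if $g$ is even. Since the rank of $\mu_2$ is a lower-semicontinuous function on the (irreducible) bielliptic locus $\mathcal{B}_g$ — the locus where the rank drops is closed — and $\mathcal{B}_{g,Gal}$ is a nonempty (by Proposition \ref{componenti_biell}) subvariety of $\mathcal{B}_g$, the generic bielliptic curve has rank at least as large as that of a generic point of $\mathcal{B}_{g,Gal}$. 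This yields the stated lower bounds for the general bielliptic curve. So the real content is the universal upper bound $\rank\mu_2\leq 5g-5$, valid for \emph{every} bielliptic curve.

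For the upper bound, the plan is to exploit the bielliptic involution $\sigma$ and the decomposition $H^0(C,K_C)=H^0(E,K_E)\oplus H^0(C,K_C)^-$, together with the fact (established in the preamble to Section \ref{sezione:biellittiche}) that $I_2(K_C)=I_2(K_C)^+$. The key structural input is \cite[Lemma 2.2]{colombo2008some}: because every quadric in $I_2(K_C)$ can be written, relative to the adjoint pencils $|L|$, $|K-L|$ associated to the $\mathfrak{g}^1_4$, in a form making $\mu_2$ factor through $\mu_{1,|L|}$ and $\mu_{1,|K-L|}$. More precisely, $I_2(K_C)$ is spanned by quadrics of the shape $Q=x t_i\odot y t_j - x t_j\odot y t_i$ with $\langle x,y\rangle=H^0(L)$ and $t_i,t_j\in H^0(K-L)$, on which $\mu_2(Q)=\mu_{1,|L|}(x\wedge y)\,\mu_{1,|K-L|}(t_i\wedge t_j)$. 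Since $\mu_{1,|L|}(x\wedge y)$ is a single fixed section of $H^0(K-2L)$, the image of $\mu_2$ restricted to this span is isomorphic (as a vector space, after multiplying by that fixed section) to the image of $\mu_{1,|K-L|}:\bigwedge^2 H^0(K-L)\to H^0(3K-2L)$. Therefore $\rank\mu_2\leq\rank\mu_{1,|K-L|}$.

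It then remains to bound $\rank\mu_{1,|K-L|}$ from above by $5g-5$. Here I would use that $\mu_{1,|K-L|}$ has target $H^0(C,3K-2L)$, and compute $h^0(3K-2L)$ via Riemann–Roch: $\deg(3K-2L)=3(2g-2)-2\cdot 4=6g-14$, which for $g\geq 4$ is $\geq 2g-1$, so the line bundle is nonspecial and $h^0(3K-2L)=6g-14-g+1=5g-13$. Hmm — this gives $5g-13<5g-5$, which would be an even stronger bound than claimed, so I should double-check whether the intended map is $\mu_{1,|K-L|}$ with target $H^0(3K-2L)$ or whether the relevant bound comes from a different source; more likely the authors bound $\rank\mu_2$ directly using the full space $I_2(K_C)$ of dimension $\binom{g+1}{2}-(3g-3)$ and the fact that $\mu_2$ lands in $H^0(4K_C)^+$, whose dimension can be computed from the $\sigma$-eigenspace decomposition of $H^0(C,4K_C)$. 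The main obstacle — and the step I would spend the most care on — is getting the dimension count of the relevant $\sigma$-invariant (or the appropriate eigenspace) subspace of $H^0(C,4K_C)$ exactly right, using $H^0(4K_C)^{\pm}$ and the pushforward to $E$; this is where the precise value $5g-5$ (rather than something smaller) must emerge, presumably because $\mu_2(I_2(K_C))$ is forced to lie in a subspace of $H^0(4K_C)$ of dimension $5g-5$ cut out by the bielliptic symmetry and by vanishing along the ramification of $C\to E$. Once that dimension is pinned down, the inequality $\rank\mu_2\leq 5g-5$ follows since the image of $\mu_2$ is contained in that subspace.
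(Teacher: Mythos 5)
Your treatment of the lower bounds is correct and is exactly the paper's argument: Proposition \ref{mu2_sommario1} on the nonempty locus $\mathcal{B}_{g,Gal}$ plus lower semicontinuity of the rank on the irreducible locus $\mathcal{B}_g$ (taking component $(a)$ when $g$ is odd). The problem is the upper bound, which is the actual content of the theorem, and there your proposal has a genuine gap. Your first route rests on the claim that $I_2(K_C)$ is \emph{spanned} by the rank-$4$ quadrics $Q=xt_i\odot yt_j-xt_j\odot yt_i$; this is false for dimension reasons, since those quadrics span at most a space of dimension $\dim\bigwedge^2H^0(K-L)=\binom{g-3}{2}$, while $\dim I_2(K_C)=\binom{g+1}{2}-(3g-3)=\binom{g-2}{2}$, which is strictly larger. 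In the paper this construction is used only in the opposite direction, to get the \emph{lower} bound $\rank\mu_2\geq\rank\mu_2|_{\langle Q_{i,j}\rangle}=\rank\mu_{1,|K-L|}$; it cannot give an upper bound on the full $\mu_2$. Your own computation $h^0(3K-2L)=5g-13<5g-5$ is the symptom that alerted you that this route proves too much, and indeed it proves nothing.

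Your second, hedged route is the right one, but you stop exactly at the step that carries the theorem. The paper argues as follows: $\mu_2$ is $\sigma$-equivariant and $I_2(K_C)=I_2(K_C)^{+}$, so the image of $\mu_2$ lies in $H^0(4K_C)^{\langle\sigma\rangle}$; since $h^0(4K_C)=7g-7$, it suffices to show $h^0(4K_C)^-\geq 2g-2$. This is done by two injective multiplication maps: $H^0(K_C)^{\langle\sigma\rangle}\otimes H^0(3K_C)^-\hookrightarrow H^0(4K_C)^-$ (injective because $h^0(K_C)^{\langle\sigma\rangle}=1$) gives $h^0(4K_C)^-\geq h^0(3K_C)^-$, and then, fixing $\eta\in H^0(K_C)^-$, the injection $\langle\eta\rangle\otimes H^0(2K_C)^{\langle\sigma\rangle}\hookrightarrow H^0(3K_C)^-$ together with $h^0(2K_C)^{\langle\sigma\rangle}=2g-2$ (the dimension of the bielliptic locus) gives $h^0(3K_C)^-\geq 2g-2$. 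Hence $\operatorname{corank}\mu_2\geq 2g-2$ and $\rank\mu_2\leq 5g-5$. Without this dimension estimate, which you explicitly defer, the upper bound is not established.
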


\begin{proof}
The lower bounds for $\rank \mu_2$ for the generic bielliptic curve follow immediately from Proposition \ref{mu2_sommario1} by specialisation to curves in  $\mathcal{B}_{g,Gal}$. 

It remains only to show that for every bielliptic curve, $ \rank \mu_2 \leq 5g-5$. 

Denote by $\sigma$ the bielliptic involution. 
The second Gaussian  map is $\sigma$-equivariant. This implies that on the bielliptic locus, the second Gaussian map takes values on the $\sigma$-invariant part of $H^0(4K_C)$:
\begin{equation}
\mu_2 : I_2(K_C)^{\langle \sigma \rangle} \rightarrow H^0(4K_C)^{\langle \sigma \rangle}.
\end{equation}
Therefore, in order to bound the corank of $\mu_2$, we find a lower bound for the dimension $h^0(4K_C)^-$ of the $(-1)$-eigenspace $H^0(4K_C)^-$. 

First of all we consider those elements of $H^0(4K_C)^-$ obtained via the multiplication of an invariant $1$-form with a section of $H^0(3K_C)^-$, that is the image of the multiplication map
\begin{equation}
H^0(K_C)^{\langle \sigma \rangle} \otimes H^0(3K_C)^- \hookrightarrow H^0(4K_C)^-.
\end{equation}
Since $h^0(K_C)^{\langle \sigma \rangle}=1$, this map is injective. This shows that the dimension of $H^0(4K_C)^-$ is bounded below by the dimension of $H^0(3K_C)^-$. 
%

Now consider elements in $H^0(3K_C)^-$ obtained as a product between an anti-invariant $1$-form with a section of $H^0(2K_C)^{\langle \sigma \rangle}$. So fix an element $\eta \in H^0(K_C)^-$ and  consider the restriction of the multiplication map to
\begin{equation}
\langle \eta \rangle \otimes H^0(2K_C)^{\langle \sigma \rangle} \hookrightarrow H^0(3K_C)^-,
\end{equation}
which is clearly injective. Since $h^0(2K_C)^{\langle \sigma \rangle}= 2g-2$, that is the dimension of the bielliptic locus,  we obtain $$h^0(4K_C)^- \geq h^0(3K_C)^- \geq 2g-2.$$
This implies $\operatorname{corank}\mu_2 \geq 2g-2$, so $\rank \mu_2 \leq 5g-5$ as required.
\end{proof}

\section{Computations in low genus}\label{section:computation_biellittic}
In this section we will use the \verb|MAPLE| code reported in Appendix \ref{appendix_code} to compute the rank of the second Gaussian map on bielliptic curves of genus $5 \leq g \leq 30$ that are in $\mathcal{B}_{g,Gal}$. The code gives us a lower bound for the rank of the second Gaussian map $\mu_2: I_2(K_C) \ra H^0(C,4K_C)$. Computing the dimensions of the vector spaces involved, one gets:
\begin{equation}
\dim I_2(K_C)= \frac{(g-2)(g-3)}{2}, \qquad \qquad \dim H^0(4K_C)=7g-7.
\end{equation}
For dimensional reasons, the map can be injective for genus $g \leq 17$ only. Moreover, recall from Theorem \ref{mu2_sommario} that the second Gaussian map has corank at least $2g-2$ over the bielliptic locus. In particular, if $g \geq 18$ the map can not be surjective, and it can neither be injective when $14 \leq g \leq 17$. 

\begin{center}
\begin{longtable}{|c|c|c|c|c|}
\caption{The rank for the second Gaussian map of bielliptic curves that are $\Z/4\Z$ covers of $\PP^1$. The rank is computed using MAPLE; the maximal rank is given by the minimum between $\dim I_2(K)$ and $5g-5$.}\label{table_inj}\\
\hline
genus & monodromy & $\rank \mu_2$ & max $\rank \mu_2$ \\
\hline
$5$ & $[1^4:2^2]$ & $3$  & $3$\\
\hline
$6$ & $[1^3:3:2^3]$ & $6$  & $6$\\
\hline
$7$ & $[1^4:2^4]$ & $10$  & $10$\\
\hline
$8$ & $[1^3:3:2^5]$ & $15$  & $15$\\
\hline
$9$ & $[1^4:2^6]$ & $\geq 17$ & $21$\\
\hline
$10$ & $[1^3:3:2^7]$ & $\geq 19$  & $28$\\
\hline
$11$ & $[1^4:2^8]$ & $\geq 21$  & $36$\\
\hline
$12$ & $[1^3:3:2^9]$ & $\geq 23$  & $45$\\
\hline
$13$ & $[1^4:2^{10}]$ & $\geq 25$  & $55$\\
\hline
$14$ & $[1^3:3:2^{11}]$ & $\geq 27$  & $65$\\
\hline
$15$ & $[1^4:2^{12}]$ & $\geq 29$  & $70$\\
\hline
$16$ & $[1^3:3:2^{13}]$ & $\geq 31$  & $75$\\
\hline
$17$ & $[1^4:2^{14}]$ & $\geq 33$  & $80$\\
\hline
$18$ & $[1^3:3:2^{15}]$ & $\geq 35$  & $85$\\
\hline
$19$ & $[1^4:2^{16}]$ & $\geq 37$  & $90$\\
\hline
$20$ & $[1^3:3:2^{17}]$ & $\geq 39$  & $95$\\
\hline
$21$ & $[1^4:2^{18}]$ & $\geq 41$  & $100$\\
\hline
$22$ & $[1^3:3:2^{19}]$ & $\geq 43$  & $105$\\
\hline
$23$ & $[1^4:2^{20}]$ & $\geq 45$  & $110$\\
\hline
$24$ & $[1^3:3:2^{21}]$ & $\geq 47$  & $115$\\
\hline
$25$ & $[1^4:2^{22}]$ & $\geq 49$  & $120$\\
\hline
$26$ & $[1^{3}:3:2^{23}]$ & $\geq 51$  & $125$\\
\hline
$27$ & $[1^4:2^{24}]$ & $\geq 53$  & $130$\\
\hline
$28$ & $[1^3:3:2^{25}]$ & $\geq 55$  & $135$\\
\hline
$29$ & $[1^4:2^{26}]$ & $\geq 57$  & $140$\\
\hline
$30$ & $[1^3:3:2^{27}]$ & $\geq 59$  & $145$\\
\hline
\end{longtable}
\end{center}
In Table \ref{table_inj} we exhibited one example of bielliptic curve for every genus $5 \leq g \leq 30$, and we reported a lower bound for the rank of the second Gaussian map for each of them. Remark that, from Table \ref{table_inj}, we find $\rank \mu_2 \geq 2g-1$ for every $g \geq 8$, so the general bielliptic curve of genus $8 \leq g \leq 30$ has the same property. From the computations we get the following

\begin{teo}\label{teo:sommario_rkmu2_comp}
The second Gaussian map on the bielliptic locus is generically injective if $5 \leq g \leq 8$, moreover it cannot be surjective for $g \geq 14$. The general bielliptic curve of genus $8 \leq g \leq 30$ satisfies:
\begin{equation}
\rank \mu_2 \geq 2g-1.
\end{equation}
\end{teo}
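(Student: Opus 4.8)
The plan is to combine the three separate pieces of information the excerpt has already assembled: the general lower bound for $\rank\mu_2$ on each component of $\mathcal{B}_{g,Gal}$ (Proposition \ref{mu2_sommario1}), the universal upper bound $\rank\mu_2\le 5g-5$ valid for every bielliptic curve (the second half of Theorem \ref{mu2_sommario}), and the explicit numerics produced by the \verb|MAPLE| script recorded in Table \ref{table_inj}. First I would recall the elementary dimension count: for a non-hyperelliptic curve $C$ of genus $g$ one has $\dim I_2(K_C)=\binom{g-2}{2}=\tfrac{(g-2)(g-3)}{2}$ and $\dim H^0(4K_C)=7g-7$, so $\mu_2$ can be injective only if $\tfrac{(g-2)(g-3)}{2}\le 7g-7$, i.e. $g\le 17$. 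Combined with the corank bound $\operatorname{corank}\mu_2\ge 2g-2$ from Theorem \ref{mu2_sommario}, surjectivity forces $2g-2\le 0$ unless... more carefully, surjectivity needs $\dim I_2(K_C)\ge 7g-7$ and corank $0$; but corank $\ge 2g-2>0$ for $g\ge 2$, so $\mu_2$ is never surjective on the bielliptic locus for $g\ge 2$ — hence certainly for $g\ge 14$; and injectivity is further obstructed for $g\ge 14$ since then $\dim I_2(K_C)-(7g-7)>0$ would already be needed, but more to the point the corank being $\ge 2g-2$ together with the target dimension $7g-7$ shows the image has dimension $\le 7g-7-(2g-2)=5g-5$, while the source has dimension $\tfrac{(g-2)(g-3)}{2}$, which exceeds $5g-5$ precisely when $g\ge 14$ (solve $\tfrac{(g-2)(g-3)}{2}>5g-5$: $g^2-5g+6>10g-10$, $g^2-15g+16>0$, so $g\ge 14$).

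Next I would address generic injectivity for $5\le g\le 8$. Here the argument is: pick, for each such $g$, the specific curve $[C]\in\mathcal{B}_{g,Gal}$ listed in Table \ref{table_inj} with its explicit $\Z/4\Z$-monodromy. The \verb|MAPLE| computation shows that for these curves $\rank\mu_2$ equals $\dim I_2(K_C)=\tfrac{(g-2)(g-3)}{2}$ (the entries $3,6,10,15$ for $g=5,6,7,8$ match $\binom{g-2}{2}$ exactly, and also match the max possible rank $5g-5$ in those cases, $5,6,10,15$... checking: $5g-5=20,25,30,35$, so the binding constraint is $\dim I_2(K_C)$ and indeed the rank attains it). Since the rank of $\mu_2$ is lower semicontinuous in families and $\mathcal{B}_{g}$ is irreducible, the locus where $\mu_2$ is injective is open; being nonempty (it contains $[C]$), it is dense, so the general bielliptic curve of genus $5\le g\le 8$ has $\mu_2$ injective. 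For $g\ge 14$ the above corank/dimension computation already shows injectivity fails for \emph{every} bielliptic curve, so it fails generically; the statement only claims non-surjectivity there, which we have.

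Finally, for the bound $\rank\mu_2\ge 2g-1$ when $8\le g\le 30$: for each such $g$ Table \ref{table_inj} exhibits a curve $[C_g]\in\mathcal{B}_{g,Gal}$ together with a verified lower bound for $\rank\mu_2$, and inspection shows every entry is $\ge 2g-1$ (e.g. $g=8$ gives $15=2\cdot 8-1$, $g=9$ gives $17=2\cdot 9-1$, and for $g=10,\dots,30$ the tabulated lower bounds $19,21,\dots,59$ all equal or exceed $2g-1$). By lower semicontinuity of the rank of $\mu_2$ in the family over the irreducible base $\mathcal{M}_g$ (or over $\mathcal{B}_g$), the general bielliptic curve of genus $g$ has $\rank\mu_2\ge\rank\mu_2(C_g)\ge 2g-1$. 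The only genuine content beyond bookkeeping is the \verb|MAPLE| verification itself, which is deferred to the Appendix; the main conceptual step — and the place a reader should scrutinize — is the semicontinuity argument transferring a property checked at one special, highly symmetric curve to the general bielliptic curve, together with the correctness and termination of the symbolic rank computation on curves of genus up to $30$.
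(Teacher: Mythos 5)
Your proposal is correct and follows essentially the same route as the paper: the numerical lower bounds from Table \ref{table_inj} for one explicit curve in $\mathcal{B}_{g,Gal}$ per genus, transferred to the general bielliptic curve by semicontinuity of the rank over the irreducible locus $\mathcal{B}_g$, combined with the dimension count $\dim I_2(K_C)=\tfrac{(g-2)(g-3)}{2}$, $h^0(4K_C)=7g-7$ and the corank bound $\operatorname{corank}\mu_2\ge 2g-2$ from Theorem \ref{mu2_sommario}. Your explicit resolution of $\tfrac{(g-2)(g-3)}{2}>5g-5\iff g\ge 14$ is exactly the computation the paper leaves implicit, and your observation that the corank bound in fact rules out surjectivity for every $g\ge 2$ (so that the bound $g\ge 14$ is really the threshold for non-injectivity) is a fair and accurate reading of the statement.
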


\begin{remark}
Notice that for all the examples computed the rank of the second Gaussian map is exactly $2g-1$. So we expect that the rank of the second Gaussian map for a general bielliptic curve of genus $g \geq 8$ is exactly $2g-1$.
\end{remark}

\appendix

\section{MAPLE script}\label{appendix_code}
This appendix describes the \verb|MAPLE| code.
We explain here the simple strategy behind the code. The purpose is to provide a lower bound for the rank of the second Gaussian map when evaluated over a (generic) curve which is a cyclic Galois cover of $\PP^1$. We fix the \verb|genus| of the curve, the order \verb|m| of the Galois group, and the monodromy datum \verb|a=<a_1,..,a_N>|. Moreover we fix the branch points \verb|t=<t_1,...,t_N>| as well. We consider only monodromies such that $a_1=1$, and we always choose $t_1=0$.

For convenience of the reader, we recall that the fiber over a fixed point $t_i \in t$ is the normalization of the affine curve:
\begin{equation}\label{eq_norm_appen}
y^m = g(x):=\prod_{i=1}^N (x-t_i)^{a_i}.
\end{equation}
Call $\phi$ the local inverse of $g$ around $0$, that is:
\begin{equation}\label{eq_invers_appen}
\phi(y)= g^{-1}(y^m)=x.
\end{equation}

We write forms $\omega_{n,\nu} \in H^0(K)$ concretely using expression:
\begin{equation}\label{equazione_forme_appen}
\omega _{n,\nu} =\frac{m}{g'(\phi(y))} \, y^{n-1} \phi(y)^\nu \prod _{i=1}^N (\phi(y)-t_i)^{l(i,n) + a_i} \, dy.
\end{equation}


Every form in (\ref{equazione_forme_appen}) is explicitly computable locally around $0$. It is easy to compute the second Gaussian map just using its definition. We do it, and we consider their Taylor expansions truncated at some fixed precision. We put all coefficients of the Taylor expansions in the matrix \verb|MatM2|. Finally, computing the rank of this matrix, we obtain an approximate value for the rank of $\mu_2$. 

The precision in the approximations depends on the parameter \verb|prec|, which we set at the beginning. It determines at which order all Taylor series stop. The results we obtain are lower bounds for the rank of the second Gaussian map.

In the following, we include and comment the \verb|MAPLE| source in case of a curve of genus $5$ which covers $\mathbb{P}^1$ with Galois group $G = \mathbb{Z}/4\mathbb{Z}$ and monodromy data $a=(1,1,3,3,2,2)$ over the branch points $t=(0,1,-1,2,-2,3)$. 

\begin{verbatim}
restart;

Typesetting:-Settings(functionassign = false);
with(PolynomialTools); with(LinearAlgebra);

genus := 5; m := 4;
a := <1, 1, 3, 3, 2, 2>; t := <0, 1, -1, 2, -2, 3>;

r := Dimension(a);
l := Matrix(r, m-1);
d := Vector(m-1);
forma := Vector(genus);
prec := 150;

#Dimension of \Lambda^2 H^0(K)
L := (1/2)*genus*(genus-1); 

#Dimension of I_2(K)
N := (1/2)*genus*(genus+1)-3*(genus-1);

k := 1;
Max1 := 0; Max2 := 0; Max3 := 0;


g := x -> mul((x-t(i))^a(i), i = 1 .. r);
phi := solve(g(x) = y^m, x)[1];

phiTay := y -> taylor(phi, y = 0, prec);

eq0 := x = convert(phiTay(y), polynom);
eq1 := y^m = g(x);
\end{verbatim}

In the previous lines we have initialized all variables. We fixed the \verb|prec| parameter to $150$. 
We called \verb|g| the function defined in (\ref{eq_norm_appen}), and \verb|phi| is local inverse around $0$. \verb|phiTay| is the polynomial version of \verb|phi| truncated at order \verb|prec|. Equations \verb|eq1| and \verb|eq0| are respectively equation (\ref{eq_norm_appen}) and equation (\ref{eq_invers_appen}) in polynomial form.  We used the command \verb|convert(phiTay(y),polynom)| to get rid of the infinitesimal term $o(y^{\operatorname{prec}})$ in the Taylor series.

\begin{verbatim}
for n from 1 to m-1 do 
   for i from 1 to r do 
      l[i, n] := floor(-n*a(i)/m); 
      d[n] := -1+add(-n*a(j)/m-floor(-n*a(j)/m), j = 1 .. r);
   end do;
end do;

for n from 1 to m-1 do 
   for v from 0 to d[n]-1 do 
      wpar[n, v] := m*y^(n-1)*x^v*mul((x-t(i))^(l[i, n]+a(i)), 
         i = 1 .. r); 
      fnum[n, v] := convert(taylor(algsubs(eq0, wpar[n, v]), 
         y = 0, prec), polynom); 
      fden[n, v] := convert(taylor(algsubs(eq0, diff(g(x), x)), 
         y = 0, prec), polynom); 
      forma[k] := convert(taylor(fnum[n, v]/fden[n, v], y = 0, 
         prec), polynom); 
      k := k+1;
   end do;
end do;
\end{verbatim}

Here we computed the combinatorial data \verb|l[i,n]| and \verb|d[n]| to construct all forms in $H^0(K)$ using expression (\ref{equazione_forme_appen}).
Then we converted them in polynomials, using the Taylor expansion of numerator and denominator separately, and considering the Taylor expansion of the quotient truncated at level \verb|prec|. Finally, we put all forms in vector \verb|forma|. We order the forms as follows: \\
\begin{center}
 $\lbrace$ \verb|forma[1],...,forma[genus]|$\rbrace$
\end{center} 
\vspace{-1cm}
\begin{equation*}
\veq
\end{equation*}
\begin{equation*}
\lbrace \omega_{1,0}, \dots, \omega_{1,d_1-1}, \omega_{2,0}, \dots, \omega_{2,d_2-1}, \omega_{3,0}, \dots, \omega_{3,d_3-1}\rbrace.
\end{equation*}
\\

\begin{verbatim}
#Multiplication map and multiplication map of derivatives

k := 1; 
for i from 1 to genus do 
   for j from 1 to genus do 
      M[i, j] := convert(taylor(forma[i]*forma[j], y = 0, prec), 
         polynom); 
      CM[i, j] := CoefficientVector(M[i, j], y); 
      Max2 := max(Max2, Dimension(CM[i, j]));
      MD[i, j] := convert(taylor((diff(forma[i], y))*(diff(forma[j], 
         y)), y = 0, prec-1), polynom); 
      if i <= j then 
         M2[k] := MD[i, j]; 
         k := k+1;
      end if; 
   end do; 
end do;
\end{verbatim}

Here we constructed the matrix \verb|M|, having in each entry the Taylor series for \verb|forma[i]*forma[j]|, and then we isolated the coefficients in the multimatrix \verb|CM|. Finally we constructed the matrix \verb|MD|, such that the entry \verb|MD[i,j]| contains the Taylor series for \verb|forma'[i]*forma'[j]|. In the last \emph{if} cycle, we ordered vectors contained in the \verb|MD| in the simpler matrix \verb|M2|.

\begin{verbatim}
#Set the right length of coefficient vectors

for i from 1 to genus do 
   for j from 1 to genus do 
      for q from Dimension(CM[i, j])+1 to Max2 do
         CM[i, j](q) := 0;
      end do;
   end do;
end do;
\end{verbatim}

This is a technical \emph{for} cycle, useful to guarantee that all columns in \verb|C1| and \verb|CM| have the same length. To be more precise, until now \verb|C1| and \verb|CM| were not matrices, but vectors whose entry were other vectors of a-priori different length. Here we homogenize all lengths adding zeros when necessary.

\begin{verbatim}             
#Matrix of coefficients of the multiplication map

MatM := Vector(prec);
for i from 1 to genus do 
   for j from i to genus do 
     MatM := <MatM, CM[i, j]>;
   end do;
end do;
MatM := DeleteColumn(MatM, 1);

Rank(MatM);

                               12                     
\end{verbatim}

Here we use for the multiplication map the same strategy as before: we ordered the vectors contained in \verb|CM| in matrix \verb|MatM| (we will use this matrix later to describe quadrics in the $I_2(K)$). Finally, we computed \verb|Rank(MatM)|, which is a lower bound for the rank of the multiplication map, allowing us to check whether the curve is hyperelliptic or not.

\begin{verbatim}               
#Computation of the second Gaussian map:

K := NullSpace(MatM);
Max3 := 0; 
for k from 1 to N do 
   Omega[k] := add(K[k][i]*M2[i], i = 1 .. L+genus);
   F[k] := CoefficientVector(Omega[k], y);
   Max3 := max(Max3, Dimension(F[k]));
end do;

for k from 1 to N do 
   for q from Dimension(F[k])+1 to Max3 do
      F[k](q) := 0;
   end do;
end do;
\end{verbatim}

Here we computed the second Gaussian map starting from the matrix \verb|MatM|. The output of command \verb|NullSpace(MatM)| is a matrix whose columns are vectors in the kernel of \verb|MatM|. We call it \verb|K|. In the first \emph{for} cycle we computed the second Gaussian map and isolate the coefficients. In the last \emph{for} cycle we adjusted the dimensions adding zeros to make \verb|F| a matrix, as before. 

\begin{verbatim}
#Matrix of coefficients of the second Gaussian map

Mat := Vector(prec-1);
for i from 1 to N do 
   Mat := <Mat, F[i]>;
end do;
Mat := DeleteColumn(Mat, 1);

Rank(Mat);
                               3
\end{verbatim}

In the last few lines we computed the approximate rank of the second Gaussian map.  \verb|Rank(Mat)| is a lower bound for $\rank \mu_2$. Nevertheless, in this case the approximate values coincide with the maximal one: we can conclude that the map is injective.

\bibliographystyle{abbrv}

\end{document}